\documentclass[10pt,a4paper]{article}
\usepackage{amsthm}

\usepackage{amsmath, amssymb}
\usepackage[margin=1in]{geometry}
\usepackage{mathtools}
\usepackage[shortlabels]{enumitem}
\usepackage{hyperref}
\usepackage{xcolor}
\usepackage{graphicx}
\usepackage{algorithm}
\usepackage{algorithmic}
\usepackage{tikz}
\usepackage{pgfplots}
\pgfplotsset{compat=1.17}
\usetikzlibrary{calc, shadows}

\usepackage[normalem]{ulem} 
\usepackage{cancel}         

\newif\ifchanges
\changestrue

\definecolor{ADDcolor}{RGB}{0,92,230}     
\definecolor{DELcolor}{RGB}{200,0,0}      
\definecolor{REPcolor}{RGB}{128,0,128}    
\definecolor{SUGcolor}{RGB}{90,90,90}     

\newcommand{\ADD}[1]{\ifchanges\ifmmode{\color{ADDcolor}#1}\else{\textcolor{ADDcolor}{#1}}\fi\else#1\fi}
\newcommand{\DEL}[1]{\ifchanges\ifmmode{\color{DELcolor}\cancel{#1}}\else{\textcolor{DELcolor}{\sout{#1}}}\fi\else\fi}

\newtheorem{theorem}{Theorem}[section]
\newtheorem{lemma}[theorem]{Lemma}
\newtheorem{proposition}[theorem]{Proposition}
\newtheorem{corollary}[theorem]{Corollary}
\theoremstyle{definition}
\newtheorem{definition}[theorem]{Definition}
\newtheorem{remark}[theorem]{Remark}

\newtheorem{assumption}{Assumption}

\newcommand{\R}{\mathbb{R}}

\DeclareMathOperator{\dive}{div}

\title{Inverse Robin Spectral Problem for the $p$-Laplace Operator}
\author{Farid Bozorgnia, Olimjon  Eshkobilov\\
\small Department of Mathematics, New Uzbekistan University\\
\small Email: f.bozorgnia@newuu.uz,  Email: o.eshqobilov@newuu.uz }
\date{ }

\begin{document}

\maketitle
\begin{abstract}
We  investigate an inverse Robin spectral problem for the $p$-Laplace operator on a bounded domain with  mixed Dirichlet-Robin boundary conditions. The aim is to identify an unknown Robin coefficient on an inaccessible boundary portion from spectral information and boundary flux data measured on an accessible part.

We first establish a thin-coating asymptotic limit that extends the classical result of   Friedlander and Keller from the linear Laplacian to the nonlinear $p$-Laplacian. The analysis yields an effective Robin law in which the induced coefficient depends on the coating thickness through a $p$--dependent power, making explicit how the nonlinearity enters via conductivity scaling. We then prove uniqueness of the Robin coefficient by linearizing the forward map and combining the resulting linearized equation with a boundary Cauchy unique continuation principle. 
Finally, we obtain a conditional local \emph{H\"older-type} stability estimate (with an explicit nonlinear remainder) by combining Fr\'echet differentiability of the solution/measurement maps with a quantitative stability bound for the \emph{linearized} inverse problem.

\end{abstract}

\textbf{Keywords}:  Inverse spectral problem, p-Laplacian, Robin boundary condition, Nonlinear eigenvalue problems, Stability estimate.\\
\textbf{MSC2020} 35R30, 35J92, 35P30. 

\tableofcontents

\section{Introduction}
 
Elliptic eigenvalue problems play a central role in science and engineering, with applications ranging from mechanical vibrations and quantum mechanics to biological and chemical diffusion. 
Their analysis lies at the heart of both theoretical and applied mathematics, and nonlinear versions of these problems have attracted increasing attention over the past decades due to their rich mathematical structure and physical relevance.

Recovering information about a physical system from spectral data has a long history dating back to the classical works of Kac~\cite{kac1966} and Gel'fand~\cite{gelfand1951}. 
In applications such as non-destructive testing~\cite{ikehata2000,bacchelli2006}, electrical impedance tomography (EIT)~\cite{nachman1996,astala2006}, and corrosion detection~\cite{alessandrini2003}, one is often faced with the task of determining unknown boundary interactions from indirect measurements. For foundational boundary-measurement methods based on singular solutions in inverse conductivity, see \cite{alessandrini1990singular}.

A particularly important class of such problems involves \emph{Robin boundary conditions}, which arise naturally when a domain is coated by a thin layer or interacts with an external medium. 
For the classical Laplacian ($p=2$), ~\cite{Fr, fk} showed that a thinly coated domain can be approximated by an effective Robin boundary condition
\[
\partial_\nu u + h u = 0,
\]
where the coefficient $h$ depends on the thickness and conductivity of the coating. 
This result underlies many inverse methods for corrosion detection~\cite{alessandrini2003}, non-destructive evaluation~\cite{ikehata2000}, heat transfer~\cite{crank1975}, and electromagnetic shielding~\cite{ebbesen1996}. 
For $p=2$, inverse Robin problems are now well understood: uniqueness, reconstruction, and stability results are available even when only partial boundary data are accessible~\cite{sy2020,Is,chaabane2003,canuto2001,bruhl2003}. In contrast, much less is known for the nonlinear $p$-Laplacian
\[
-\Delta_p u := -\operatorname{div}(|\nabla u|^{p-2}\nabla u), \qquad p\neq 2,
\]
which arises in the modeling of non-Newtonian fluids~\cite{bird1987}, nonlinear elasticity, and related problems. 
The parameter $p$ encodes essential physical properties: $p<2$ corresponds to shear-thinning fluids (such as blood or polymer solutions), $p=2$ recovers the classical Newtonian case, and $p>2$ describes shear-thickening fluids. 
The limiting regimes $p\to1$ and $p\to\infty$ lead to the total variation flow and the infinity Laplacian, respectively~\cite{juutinen2005,jensen1993}.

Despite the extensive literature on eigenvalue optimization and spectral theory for the $p$-Laplacian~\cite{Antunes2018,kao-mohammadi2021robin,kao-mohammadi2022plaplacian}, inverse problems for nonlinear operators remain far less developed than their linear counterparts. 
The nonlinearity destroys the superposition principle, eigenfunctions do not form an orthogonal basis, and even the structure of the spectrum is poorly understood beyond the principal eigenvalue \cite{drabek_takac_fredholm}. 
As a result, while inverse Robin problems for $p=2$ are classical, for $p\neq2$ both uniqueness and stability of the Robin coefficient from spectral or boundary data have remained largely open.

\medskip

The present work aims to close this gap. We study eigenvalue problems for the $p$-Laplacian under mixed Dirichlet–Robin boundary conditions and investigate how the Robin coefficient on an inaccessible portion of the boundary can be recovered from spectral and boundary measurements on an accessible part. 
Our motivation comes from impedance and thin-coating models, where an effective Robin boundary term emerges on the exterior of a coated or corroded object.

Our main contributions are threefold. First, we establish a rigorous thin-coating limit for the $p$-Laplacian, extending the classical result of   Friedlander and Keller from $p=2$ to general $p \in (1,\infty)$. 
Second, we prove the uniqueness of the Robin coefficient in an inverse spectral problem with partial boundary data. 
Third, we derive a conditional local \emph{H\"older-type} stability estimate with an explicit nonlinear remainder term.

The paper is organized as follows. Section~2 collects preliminary material on the $p$-Laplacian and its eigenvalue problem. 
Section~\ref{sec:coating} establishes the thin-coating asymptotics. 
Section~\ref{Inv} is devoted to the inverse problem, where uniqueness and stability are proved.

\subsection{Problem Setting}

Let $\Omega \subset \mathbb{R}^n$ ($n \geq 2$) be a bounded domain with $C^{2}$ boundary.
We assume that $\partial\Omega$ is decomposed into two disjoint relatively open parts
$\Gamma_D$ and $\gamma$ such that $\partial\Omega = \Gamma_D \cup \gamma$ with $|\Gamma_D| > 0$ and $|\gamma| > 0$. We refer to $\Gamma_D$ as the \emph{accessible} portion
of the boundary and to $\gamma$ as the \emph{inaccessible} portion, on which an unknown
boundary interaction is supported.

We consider the nonlinear eigenvalue problem
\begin{equation}\label{eq:p-laplace-robin}
\begin{cases}
-\Delta_p u = \lambda |u|^{p-2}u & \text{in } \Omega,\\[2mm]
u = 0 & \text{on } \Gamma_D,\\[2mm]
|\nabla u|^{p-2} \dfrac{\partial u}{\partial \nu} + h |u|^{p-2}u = 0 & \text{on } \gamma,
\end{cases}
\end{equation}
where $\nu$ denotes the outward unit normal and $h \ge 0$ is a (possibly unknown)
Robin coefficient defined on $\gamma$.

We focus on the first eigenpair and impose the normalization
\[
\int_\Omega |u|^p \, dx = 1.
\]
The first eigenvalue admits the variational characterization
\[
\lambda(h)
=
\inf \left\{
\int_\Omega |\nabla v|^p \, dx + \int_\gamma h |v|^p \, d\sigma
:\;
v \in W^{1,p}(\Omega),\;
v=0 \text{ on }\Gamma_D,\;
\int_\Omega |v|^p dx = 1
\right\}.
\]

The inverse problem addressed in this paper is the following:
given spectral and boundary measurements associated with \eqref{eq:p-laplace-robin}, can one recover the unknown Robin coefficient $h$ on $\gamma$?
More precisely, if the eigenfunction $u$ and the boundary flux
\[
q = |\nabla u|^{p-2} \frac{\partial u}{\partial \nu}
\]
are available on the accessible part $\Gamma_D$, then the Robin condition couples $h$,
$u$, and $q$ on $\gamma$. We investigate whether this information uniquely determines $h$ and how stable $h$ is with respect to the data.

Our main contributions in this work are as follows. We provide a rigorous proof of the asymptotic behavior   (Theorem \ref{thm:asymptotic}) that extends Friedman's classical result to the $p$-Laplacian.  We analyze the limiting cases as $p$ tends to one and infinity.    For the inverse problem, we establish both uniqueness (Theorem  \ref{thm:uniqueness}) and  
local conditional stability estimate (Theorem \ref{thm:log-stability}).


\section{Background and Preliminaries}\label{sec:preliminaries}

In this section, we collect auxiliary results about the $p$-Laplacian operator, regularity theory, boundary behavior, and eigenvalue problems that will be used throughout the paper.   First, we recall the well-known monotonicity property of the $p$-Laplacian operator:
\[
\big( |\nabla u_1|^{p-2} \nabla u_1 - |\nabla u_2|^{p-2} \nabla u_2 \big)
\cdot (\nabla u_1 - \nabla u_2) \ge 0.
\]
More generally, for any vectors $a, b \in \mathbb{R}^n$, the following holds:
\[
\big( |b|^{p-2} b - |a|^{p-2} a \big) \cdot (b - a)
\ge
\begin{cases}
2^{2-p} |b - a|^p, & \text{if } p \ge 2, \\[6pt]
(p-1)\dfrac{|b - a|^2}{(|a| + |b|)^{2-p}}, & \text{if } 1 < p < 2.
\end{cases}
\]
These inequalities indicate the monotonicity of the nonlinear map
$F(\xi) := |\xi|^{p-2}\xi$ and imply strict monotonicity whenever $a\neq b$.

The following is a straightforward computation. For $p > 1$ and $\xi, \eta \in \mathbb{R}^n$ with $\xi \neq 0$:
\begin{equation}\label{eq:derivative_vector}
\frac{d}{d\varepsilon} |\xi + \varepsilon \eta|^{p-2}(\xi + \varepsilon \eta)\bigg|_{\varepsilon=0}
= |\xi|^{p-2}\eta + (p-2)|\xi|^{p-4}(\xi \cdot \eta)\xi.
\end{equation}
Similarly, for $s, t \in \mathbb{R}$ with $s \neq 0$:
\begin{equation}\label{eq:derivative_scalar}
\frac{d}{d\varepsilon} |s + \varepsilon t|^{p-2}(s + \varepsilon t)\bigg|_{\varepsilon=0}
= (p-1)|s|^{p-2} t.
\end{equation}

\subsection{The $p$-Laplacian and Principal Eigenvalue Problem}

Let $\Omega \subset \mathbb{R}^n$ be a bounded domain with $C^2$ boundary.
We consider the principal eigenvalue problem for the $p$-Laplacian with mixed boundary conditions:
\begin{equation}\label{eq:eigen_problem_background}
\begin{cases}
-\Delta_p u = \lambda |u|^{p-2}u, & \text{in } \Omega,\\
u = 0, & \text{on } \Gamma_D,\\
|\nabla u|^{p-2}\partial_\nu u + h\,|u|^{p-2}u = 0, & \text{on } \gamma,
\end{cases}
\end{equation}
where $\partial\Omega = \Gamma_D \cup \gamma$, $\Gamma_D$ and $\gamma$ are disjoint relatively open subsets of $\partial\Omega$ with positive surface measure, and $h$ is a nonnegative Robin coefficient supported on $\gamma$.

\begin{definition} \label{def:weak_formulation}
A pair $(\lambda, u)$ with $u \in W^{1,p}(\Omega)$, $u = 0$ on $\Gamma_D$, and $u \not\equiv 0$ is called a \emph{weak solution} of problem  \eqref{eq:eigen_problem_background} if for all $\varphi \in W^{1,p}(\Omega)$ with $\varphi = 0$ on $\Gamma_D$,
\begin{equation}\label{eq:weak_formulation}
\int_\Omega |\nabla u|^{p-2}\nabla u \cdot \nabla\varphi \, dx + \int_\gamma h|u|^{p-2}u\varphi \, d\sigma 
= \lambda \int_\Omega |u|^{p-2}u\varphi \, dx.
\end{equation}
\end{definition}
 
Throughout this paper, we normalize the principal eigenfunction by
\[\int_\Omega |u(h)|^p \, dx = 1,
\]
and require $u(h) > 0$ in $\Omega$. This normalization ensures the uniqueness of the eigenfunction (up to sign, which we fix by positivity).

It is well-known that the principal eigenvalue $\lambda_1(h)>0$ exists and can be characterized variationally by
\begin{equation}\label{eq:rayleigh_quotient}
\lambda_1(h) = \inf_{\substack{u\in W^{1,p}(\Omega)\\ u\neq 0,\, u|_{\Gamma_D}=0}}
\frac{\displaystyle\int_{\Omega} |\nabla u|^p\,dx + \int_{\gamma} h\,|u|^p\,\, d\sigma}
{\displaystyle\int_{\Omega} |u|^p\,dx}.
\end{equation}
To see more about the properties of  eigenvalues
of $p$-Laplace, we refer to    \cite{lindqvist2017, Le2006}.  

\begin{proposition}\label{prop:principal_eigenvalue}
Let $\Omega\subset\mathbb R^n$ be bounded with $C^2$ boundary, let $\partial\Omega=\Gamma_D\cup\gamma$ with
$\Gamma_D$ and $\gamma$ disjoint relatively open subsets of $\partial\Omega$ of positive surface measure,
and let $h\in L^\infty(\gamma)$ with $h\ge 0$.
Define $\lambda_1(h)$ by the Rayleigh quotient \eqref{eq:rayleigh_quotient}.
Then there exists a minimizer $u_1\in W^{1,p}(\Omega)$ with $u_1|_{\Gamma_D}=0$ and $\|u_1\|_{L^p(\Omega)}=1$
such that $(\lambda_1(h),u_1)$ is a weak eigenpair of \eqref{eq:eigen_problem_background}.
Moreover, $u_1$ has a fixed sign and can be chosen strictly positive in $\Omega$.
The eigenvalue $\lambda_1(h)$ is simple: if $v\ge 0$ is another eigenfunction corresponding to $\lambda_1(h)$,
then $v=c\,u_1$ for some constant $c>0$.
\end{proposition}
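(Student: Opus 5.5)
We prove existence by the direct method, then positivity via $|u_1|$ and the Harnack inequality, then simplicity by a Picone-type identity (equivalently, hidden convexity of the Rayleigh quotient). Throughout we work in
\[
V:=\{v\in W^{1,p}(\Omega):\ v|_{\Gamma_D}=0\},
\]
which is a closed subspace of $W^{1,p}(\Omega)$ because the trace operator is continuous.

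\textbf{Existence.} First, $\lambda_1(h)>0$. Since $|\Gamma_D|>0$, a Friedrichs--Poincar\'e inequality $\|v\|_{L^p(\Omega)}\le C\|\nabla v\|_{L^p(\Omega)}$ holds on $V$; this follows from the usual compactness argument, because a constant with zero trace on $\Gamma_D$ must vanish. Combined with $h\ge 0$, this gives $\lambda_1(h)\ge C^{-p}>0$.

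Next take a minimizing sequence $v_k\in V$ with $\|v_k\|_{L^p}=1$. The bound $\|\nabla v_k\|_p^p\le \lambda_1(h)+1$ makes the sequence bounded in $W^{1,p}$. Passing to a subsequence, we get:
\begin{itemize}
\item $v_k\rightharpoonup u_1$ weakly in $W^{1,p}$;
\item $v_k\to u_1$ strongly in $L^p(\Omega)$, by Rellich;
\item $v_k\to u_1$ strongly in $L^p(\partial\Omega)$, by compactness of the trace $W^{1,p}\to L^p(\partial\Omega)$ on $C^2$ domains.
\end{itemize}
Hence $\|u_1\|_p=1$ and $u_1\in V$. Also $\int_\gamma h|v_k|^p\to\int_\gamma h|u_1|^p$ since $h\in L^\infty$, and the gradient term is weakly lower semicontinuous. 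So $u_1$ is a minimizer.

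The Euler--Lagrange equation comes from differentiating $t\mapsto R(u_1+t\varphi)$ at $t=0$ for $\varphi\in V$. The map $\xi\mapsto|\xi|^p$ is $C^1$ with derivative $p|\xi|^{p-2}\xi$, and differentiation under the integral is justified by dominated convergence. This yields exactly \eqref{eq:weak_formulation} with $\lambda=\lambda_1(h)$.

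\textbf{Sign and positivity.} Since $|u_1|\in V$, $|\nabla|u_1||=|\nabla u_1|$ a.e., and the boundary and $L^p$ terms are unchanged, $|u_1|$ is also a minimizer, hence a weak eigenfunction. Replace $u_1$ by $|u_1|\ge 0$.

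Regularity comes next. A Moser or De Giorgi iteration (valid with the Robin term since $h\ge0$) gives $u_1\in L^\infty$. Then $C^{1,\alpha}_{loc}(\Omega)$ regularity (DiBenedetto, Tolksdorf) applies. Finally, the weak Harnack inequality of Trudinger for nonnegative supersolutions of $-\Delta_p w\ge 0$ (note $-\Delta_p u_1=\lambda u_1^{p-1}\ge 0$) shows that $u_1>0$ everywhere in the connected set $\Omega$ unless $u_1\equiv0$. The latter is excluded by the normalization.

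Thus every minimizer has a fixed sign. Indeed, if $u$ is a minimizer, both $u$ and $|u|$ are, and the argument above forces $|u|>0$; by continuity $u$ does not change sign.

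\textbf{Simplicity.} This is the main obstacle. Let $v\ge0$, $v\not\equiv0$, be an eigenfunction for $\lambda_1(h)$. By the same Harnack argument $v>0$ in $\Omega$.

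I would use the convexity route, which avoids dividing by functions vanishing on $\Gamma_D$. Normalize $\|u_1\|_p=\|v\|_p=1$ and set
\[
w=\Big(\tfrac{u_1^p+v^p}{2}\Big)^{1/p}.
\]
Then $w\in V$ and $\|w\|_p=1$. On $\gamma$ we have $|w|^p=\frac{u_1^p+v^p}{2}$, so the boundary term is exactly linear. The pointwise inequality of Belloni--Kawohl/Lindqvist (from convexity of $(a,b)\mapsto |a|^p$ in the variables $u^p$) gives
\[
|\nabla w|^p\le \tfrac12\big(|\nabla u_1|^p+|\nabla v|^p\big),
\]
with equality only where $\nabla\log u_1=\nabla\log v$.

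The step I expect to need care is showing that $v$ is itself a minimizer, so that its Rayleigh quotient equals $\lambda_1(h)$. This follows by testing the equation for $v$ with $v$ itself. Then $R(w)\le\lambda_1(h)$, so equality holds, which forces $\nabla(u_1/v)=0$ a.e. in the connected domain $\Omega$. Hence $v=c\,u_1$, and the normalizations give $c=1$ (in general $c>0$).

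Should the convexity computation prove delicate where the gradients vanish, the alternative is Picone's identity with test functions $u_1^p/(v+\varepsilon)^{p-1}$, letting $\varepsilon\to0$ via Fatou.
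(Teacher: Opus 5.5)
Your proposal is correct. The paper gives no proof of Proposition~\ref{prop:principal_eigenvalue}; it states the result as well known and refers to \cite{lindqvist2017,Le2006}, so there is no in-paper argument to compare against. Your argument is the standard one: the direct method, replacing $u_1$ by $|u_1|$, the weak Harnack inequality (strong minimum principle) for $-\Delta_p u_1=\lambda_1 u_1^{p-1}\ge 0$, and the Belloni--Kawohl hidden-convexity device $w=\bigl((u_1^p+v^p)/2\bigr)^{1/p}$ familiar from the Dirichlet case. You also identify the one place where the Robin setting could interfere. The boundary term $\int_\gamma h\,w^p\,d\sigma$ is exactly the average of the two boundary terms, so the convexity argument goes through unchanged.

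Two small justifications are worth writing out.
\begin{itemize}
\item $w\in V$. The map $(a,b)\mapsto\bigl((a^p+b^p)/2\bigr)^{1/p}$ is Lipschitz on $[0,\infty)^2$ and vanishes at the origin. Hence it preserves $W^{1,p}$, preserves zero trace on $\Gamma_D$, and commutes with the trace on $\gamma$.
\item The equality case. It uses $u_1,v>0$ in $\Omega$, so the convex weights $u_1^p/(u_1^p+v^p)$ lie strictly in $(0,1)$. It then uses strict convexity of $|\cdot|^p$ for $p>1$. Finally, $\log u_1-\log v\in C^1(\Omega)$ (by the interior regularity you invoke) has zero gradient on the connected set $\Omega$, so it is constant.
\end{itemize}

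Your positivity step uses only interior regularity. So the weaker hypothesis $h\in L^\infty(\gamma)$ here, as opposed to $h\in C^1(\gamma)$ in Lemma~\ref{lem:regularity}, causes no difficulty.

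Your argument in fact gives full simplicity, not only simplicity among nonnegative eigenfunctions. Every eigenfunction for $\lambda_1(h)$ is a minimizer (test the equation with the function itself). Hence it has one sign, and hence it is a multiple of $u_1$.
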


The following Hopf-type boundary lemma holds for the $p$-Laplacian
(see, e.g., \cite{Vazquez84,Pucci-Serrin07}) and is crucial for ensuring
non-degeneracy near the boundary.
 \begin{lemma}\label{lem:hopf}
Let $\Omega$ be a bounded $C^2$ domain, and let $u \in C^{1,\alpha}(\overline{\Omega})$ satisfy
\[
-\Delta_p u \ge 0 \quad \text{in } \Omega,
\]
with $u \ge 0$ in $\Omega$ and $u(x_0) = 0$ at some boundary point $x_0 \in \partial\Omega$.
Assume $u \not\equiv 0$. Then the outward normal derivative satisfies
\[
\partial_\nu u(x_0) < 0.
\]
\end{lemma}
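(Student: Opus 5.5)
The plan is the classical barrier argument of Hopf, adapted to the $p$-Laplacian through a radial comparison function and the weak comparison principle for $-\Delta_p$.

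\emph{Step 1: interior positivity.} Since $u\ge 0$, $u\not\equiv 0$ and $-\Delta_p u\ge 0$, the strong minimum principle of V\'azquez \cite{Vazquez84} gives $u>0$ in $\Omega$.

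\emph{Step 2: setting up the barrier.} Because $\partial\Omega$ is $C^2$, it satisfies an interior ball condition at $x_0$. Choose a ball $B_R(y)\subset\Omega$ with $\overline{B_R(y)}\cap\partial\Omega=\{x_0\}$, and work in the annulus $A=B_R(y)\setminus\overline{B_{R/2}(y)}$. On the compact inner sphere $\partial B_{R/2}(y)\subset\Omega$ we have $m:=\min_{\partial B_{R/2}}u>0$ by Step 1. As barrier take
\[
w(x)=\varepsilon\big(e^{-\alpha|x-y|}-e^{-\alpha R}\big),
\]
which depends only on $r=|x-y|$, with $\varepsilon>0$ and $\alpha>0$ to be fixed.

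\emph{Step 3: $w$ is a strict subsolution.} For a radial function $w=\phi(r)$ with $\phi'<0$, the radial formula gives
\[
\Delta_p w=|\phi'|^{p-2}\Big((p-1)\phi''+\tfrac{n-1}{r}\phi'\Big).
\]
With $\phi=\varepsilon(e^{-\alpha r}-e^{-\alpha R})$ we get $\phi'=-\varepsilon\alpha e^{-\alpha r}$ and $\phi''=\varepsilon\alpha^2e^{-\alpha r}$. Hence
\[
\Delta_p w=|\phi'|^{p-2}\,\varepsilon\alpha e^{-\alpha r}\Big((p-1)\alpha-\tfrac{n-1}{r}\Big),
\]
which is positive on $A$ once $\alpha>2(n-1)/((p-1)R)$. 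Note that $\phi'\neq 0$, so there is no degeneracy. Thus $-\Delta_p w<0\le-\Delta_p u$ in $A$.

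\emph{Step 4: boundary comparison.} On $\partial B_R(y)$ we have $w=0\le u$. On $\partial B_{R/2}(y)$ we have $w\le\varepsilon e^{-\alpha R/2}\le m\le u$ once $\varepsilon$ is small enough.

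\emph{Step 5: weak comparison.} Test the difference of the two inequalities with $(w-u)^+\in W^{1,p}_0(A)$. The monotonicity inequality recalled at the start of Section~\ref{sec:preliminaries} gives
\[
\int_{\{w>u\}}\big(|\nabla w|^{p-2}\nabla w-|\nabla u|^{p-2}\nabla u\big)\cdot\nabla(w-u)\,dx\le 0 .
\]
Strict monotonicity then forces $\nabla(w-u)^+=0$, so $(w-u)^+=0$, i.e.\ $u\ge w$ in $A$.

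\emph{Step 6: conclusion at $x_0$.} Since $u(x_0)=w(x_0)=0$ and $u\ge w$ along the inward normal segment, and $\nu(x_0)=(x_0-y)/R$ is the outward normal to both $\Omega$ and $B_R(y)$, we obtain
\[
\partial_\nu u(x_0)\le\partial_\nu w(x_0)=\phi'(R)=-\varepsilon\alpha e^{-\alpha R}<0 .
\]
This derivative exists classically because $u\in C^{1,\alpha}(\overline\Omega)$.

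The main obstacle is Step 1, the strong minimum principle for a degenerate/singular operator, which is exactly V\'azquez's result and which I would cite rather than reprove. It can itself be derived by the same barrier applied in interior balls touching the zero set, which is the standard route. The remaining steps are routine once the barrier is non-degenerate ($\phi'\neq0$), which is why an exponential profile, not a generic one, is chosen.
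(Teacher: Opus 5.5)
Your proof is correct and is essentially the argument the paper relies on. The paper gives no proof of Lemma~\ref{lem:hopf} and only cites V\'azquez and Pucci--Serrin, whose argument is exactly this one: the strong minimum principle, then weak comparison with a non-degenerate radial subsolution in an interior annulus touching $\partial\Omega$ at $x_0$. The only blemish is notational: you use $\alpha$ both for the H\"older exponent of $u$ and for the decay rate of the barrier, so rename one of them.
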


In our application to the inverse Robin problem (Theorem \ref{thm:uniqueness}), the coefficient matrix arises from linearizing the $p$-Laplacian 
around an eigenfunction $u_0 \in C^{1,\alpha}(\overline{\Omega}\setminus\zeta)$
(where $\zeta:=\overline{\Gamma_D}\cap\overline{\gamma}$),

is given by:
\[
A(x) = |\nabla u_0(x)|^{p-2} I + (p-2)|\nabla u_0(x)|^{p-4} (\nabla u_0(x) \otimes \nabla u_0(x)).
\]
 \begin{itemize}
\item For $1 < p < 2$: The coefficient $|\nabla u_0|^{p-2}$ may have reduced 
regularity near critical points. However,  on $\Gamma_D$, since $u_0 = 0$ and $u_0 > 0$ in $\Omega$, 
    Hopf's lemma (Lemma~\ref{lem:hopf}) gives $\partial_\nu u_0 < 0$, 
    hence $|\nabla u_0| = |\partial_\nu u_0| > 0$. On $\gamma$, since $u_0>0$ in $\Omega$ and $u_0\in C(\overline{\Omega}\setminus\zeta)$, we have $u_0\ge 0$ on $\gamma$. 
In fact, $u_0$ cannot vanish at any $x_0\in\gamma$ (otherwise Hopf's lemma contradicts the Robin condition even if $h(x_0)=0$), hence $u_0>0$ on $\gamma$. Moreover, at points $x\in\gamma$ with $h(x)>0$ the Robin condition implies $\partial_\nu u_0(x)<0$ and thus $|\nabla u_0(x)|>0$; if $h(x)=0$ one only obtains $\partial_\nu u_0(x)=0$ from the boundary condition, so $|\nabla u_0|$ may vanish there.   Thus $|\nabla u_0|>0$ pointwise on $\Gamma_D$, and on $\{x\in\gamma:\,h(x)>0\}$; in particular if $\inf_\gamma h>0$ then $|\nabla u_0|>0$ on $\gamma$.

Moreover, for every compact set $K\Subset\Gamma_D$ there exists $c_K>0$ such that $|\nabla u_0|\ge c_K$ on $K$. If $K\Subset\gamma$ and $\inf_{K} h>0$, then likewise $|\nabla u_0|\ge c_K$ on $K$.

\item  For the linearized operator to be uniformly elliptic throughout $\Omega$ (not just near $\partial\Omega$), one would need a uniform lower bound $|\nabla u_0|\ge c>0$ in $\Omega$, i.e.\ the absence of interior critical points. However, $u_0$ attains its maximum in $\overline{\Omega}$, and whenever the maximum is attained at an interior point one has $\nabla u_0=0$; hence \emph{global} uniform ellipticity of the unregularized linearization generally fails for principal eigenfunctions. For convex domains with pure Dirichlet conditions, the principal eigenfunction enjoys strong concavity/quasi-concavity properties (log-concavity in the linear case $p=2$, and related power-concavity results for $p\neq 2$); in particular, it has a \emph{unique} interior maximizer and thus exactly one interior critical point, see Sakaguchi~\cite{sakaguchi1987} and Kawohl~\cite{kawohl1985}. For general domains with mixed boundary conditions, the set of interior critical points may be more complicated. In our inverse analysis,  we therefore rely on uniform ellipticity only in a boundary collar (where Hopf's lemma yields $|\nabla u_0|>0$) and, when a globally uniformly elliptic linearized equation is required, we state this explicitly as Assumption~\ref{ass:path_ellipticity} (or work with a $\delta$--regularization away from the boundary).

\end{itemize} 


The following Lemma is a standard result in elliptic regularity theory for the $p$-Laplacian.
\begin{lemma}\label{lem:regularity}
Assume that $\Omega \subset \mathbb{R}^n$ has $C^{1,\alpha}$ boundary for some
$\alpha \in (0,1)$. Let $\Gamma_D$ and $\gamma$ be relatively open, disjoint
subsets of $\partial\Omega$ such that
\[
\partial\Omega = \Gamma_D \cup \gamma.
\]
Set the (possibly nonempty) interface $\zeta:=\overline{\Gamma_D}\cap\overline{\gamma}\subset\partial\Omega$.
Let $h \in C^1(\gamma)$, $h \ge 0$. If
\[
u \in W^{1,p}_{\Gamma_D}(\Omega)
\]
is a weak solution of \eqref{eq:p-laplace-robin}, then
\[
u \in C^{1,\beta}_{\mathrm{loc}}(\Omega)\cap C^{1,\beta}(\overline{\Omega}\setminus\zeta)
\quad \text{for some } \beta \in (0,1),
\]
where $\beta$ depends only on $p$, $n$, $\Omega$ (and $\alpha$) and on an upper bound for $h$ (e.g.\ $\|h\|_{C^0(\gamma)}$). In particular, $u$ is a strong solution of \eqref{eq:p-laplace-robin} in $\Omega$, and $\nabla u$ extends continuously to $\partial\Omega\setminus\zeta$.
\end{lemma}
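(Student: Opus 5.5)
The plan is to prove Lemma~\ref{lem:regularity} in three steps: global boundedness, then interior and boundary $C^{1,\beta}$ estimates away from $\zeta$, and finally the strong-solution statement. Throughout I regard the eigenvalue equation as a $p$-Laplace equation with a bounded right-hand side.

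\emph{Step 1: $u\in L^\infty(\Omega)$.} Test \eqref{eq:weak_formulation} with truncations $\varphi=(u-k)_+$ and $(u+k)_-$, which vanish on $\Gamma_D$ and are admissible. Since $h\ge 0$, the boundary term $\int_\gamma h|u|^{p-2}u(u-k)_+\,d\sigma$ is nonnegative and can be discarded. This leaves the standard De Giorgi / Moser iteration for $-\Delta_p u=\lambda|u|^{p-2}u$, with the lower-order term handled by Sobolev embedding and Young's inequality. Alternatively, I would cite the known $L^\infty$ bounds for $p$-Laplacian eigenfunctions (e.g.\ Lê~\cite{Le2006}). The result is a bound on $\|u\|_{L^\infty}$ depending on $n,p,\Omega,\lambda$ and the $W^{1,p}$ norm.

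\emph{Step 2: $C^{1,\beta}$ regularity.} Set $f:=\lambda|u|^{p-2}u\in L^\infty(\Omega)$.

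\begin{itemize}
\item \emph{Interior.} DiBenedetto's and Tolksdorf's $C^{1,\beta}_{\mathrm{loc}}$ estimates for $-\Delta_p u=f$ give $u\in C^{1,\beta}_{\mathrm{loc}}(\Omega)$.
\item \emph{Near $\Gamma_D\setminus\zeta$.} Flatten the boundary locally by a $C^{1,\alpha}$ chart. Lieberman's boundary regularity for Dirichlet data zero then yields $C^{1,\beta}$ up to $\Gamma_D$ away from $\zeta$.
\item \emph{Near $\gamma\setminus\zeta$.} The problem is conormal: $|\nabla u|^{p-2}\partial_\nu u=g$ with $g:=-h|u|^{p-2}u$. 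The boundary datum $g$ is bounded by Step~1. I would first use the De Giorgi--type boundary estimates to get $u\in C^{0,\theta}$ up to $\gamma$. Since $h\in C^1$ and $t\mapsto|t|^{p-2}t$ is $C^{0,\min(1,p-1)}$, this makes $g$ Hölder continuous. Lieberman's theorem for conormal problems (Lieberman 1988, Nonlinear Anal.) then gives $u\in C^{1,\beta}$ near every compact subset of $\gamma$.
\end{itemize}

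A finite covering of compact subsets of $\overline\Omega\setminus\zeta$ combines these local estimates. The exponent $\beta$ depends only on $n,p,\alpha$ and the bounds on $\|u\|_\infty$ and $\|h\|_{C^0}$. Here $\lambda$ is controlled via the Rayleigh quotient, and the $C^1$ norm of $h$ enters only the constant. This proves $u\in C^{1,\beta}_{\mathrm{loc}}(\Omega)\cap C^{1,\beta}(\overline\Omega\setminus\zeta)$, and in particular $\nabla u$ extends continuously to $\partial\Omega\setminus\zeta$.

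\emph{Step 3: strong solution.} With $\nabla u$ continuous, $|\nabla u|^{p-2}\nabla u$ is continuous, and its divergence equals $-f\in L^\infty$ in the distributional sense. So the equation holds a.e.\ in the sense of the divergence of this continuous vector field. The boundary condition then holds pointwise on $\gamma\setminus\zeta$, recovered from the weak form by integrating by parts against test functions localized near $\gamma$.

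\emph{Main obstacle.} I expect the difficulty to be the Robin portion $\gamma$: one must verify that Lieberman's structural hypotheses hold for the nonlinear boundary term $h|u|^{p-2}u$. For $1<p<2$ this term is only Hölder continuous in $u$, and this is why the Hölder bound on $u$ is obtained first, as a bootstrap. I also emphasize that nothing is claimed up to $\zeta$. Across the Dirichlet--Robin interface, regularity can genuinely fail, with $C^{0,1/2}$-type singularities even when $p=2$. The statement must therefore exclude $\zeta$, and the proof never needs estimates there.
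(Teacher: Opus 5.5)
Your proposal is correct and follows the same route as the paper, which gives no argument of its own. It simply cites DiBenedetto and Lewis for interior $C^{1,\beta}$ regularity and Tolksdorf and Lieberman for boundary regularity; your $L^\infty$ step, the H\"older bootstrap for the Robin datum $h|u|^{p-2}u$, and the local covering away from $\zeta$ are exactly the details those citations implicitly rely on.
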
 
  Interior  $C^{1, \alpha}$–type regularity for degenerate elliptic equations can be traced back to \cite{DiBenedetto83, Lewis93}, while boundary regularity in our mixed setting follows from \cite{Tolksdorf84, Lieberman91}

A key tool for our uniqueness result is the following boundary Cauchy unique continuation property for linear elliptic equations, which states that a solution whose \emph{Cauchy data} (Dirichlet and conormal Neumann traces) vanish on a nontrivial boundary portion must vanish identically.

\begin{theorem}\label{thm:boundary_ucp}
Let $\Omega \subset \mathbb{R}^n$ ($n \geq 2$) be a bounded domain with $C^2$ boundary, and let $\Gamma_0 \subset \partial\Omega$ be a nonempty relatively open subset. Let $A \in L^\infty(\Omega; \mathbb{R}^{n \times n})$ be symmetric and uniformly elliptic, with
\[
A \in C^{0,1}(\overline{\Omega}) \qquad (\text{i.e.\ }A\text{ is Lipschitz on }\overline{\Omega}).
\]
Let $B \in L^\infty(\Omega)$ and $\lambda \in \mathbb{R}$. If $w \in W^{1,2}(\Omega)$ is a weak solution of
\[
-\operatorname{div}(A(x)\nabla w) = \lambda B(x) w \quad \text{in } \Omega,
\]
and its Cauchy data vanish on $\Gamma_0$ in the sense that
\[
w = 0 \quad \text{and} \quad (A\nabla w) \cdot \nu = 0 \quad \text{ in the sense of trace on } \Gamma_0,
\]
then $w \equiv 0$ in $\Omega$.
\end{theorem}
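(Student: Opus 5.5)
The plan is the classical extension argument, which reduces boundary Cauchy uniqueness to interior unique continuation. Fix a point $x_0\in\Gamma_0$ and a small ball $B=B_r(x_0)$ such that $B\cap\partial\Omega\subset\Gamma_0$ and $\partial\Omega\cap B$ is a $C^2$ graph. This choice is possible because $\Gamma_0$ is relatively open.

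Set $\widetilde\Omega:=\Omega\cup B$. We extend the data to $\widetilde\Omega$ as follows:
\[
\widetilde w:=\begin{cases} w & \text{in }\Omega,\\ 0 & \text{in } B\setminus\overline\Omega,\end{cases}
\qquad \widetilde B:=B\ \text{in }\Omega,\quad \widetilde B:=0\ \text{outside}.
\]
For the coefficient matrix, we extend $A$ to a symmetric, uniformly elliptic, Lipschitz matrix $\widetilde A$ on $\widetilde\Omega$. Since $A\in C^{0,1}(\overline\Omega)$, this can be done by a Lipschitz (McShane/Kirszbraun-type) extension of each entry. To keep the extension symmetric and elliptic, we apply the extension in the local flattened coordinates, reflecting $A$ across the $C^2$ graph, and then symmetrize.

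\textbf{Step 1: $\widetilde w\in W^{1,2}(\widetilde\Omega)$.} This holds because the trace of $w$ vanishes on $\Gamma_0\cap B$. Gluing a $W^{1,2}$ function having zero trace to the zero function across a Lipschitz interface gives a $W^{1,2}$ function whose gradient is $\nabla w\,\chi_\Omega$.

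\textbf{Step 2: $\widetilde w$ is a weak solution of $-\operatorname{div}(\widetilde A\nabla\widetilde w)=\lambda\widetilde B\widetilde w$ in $\widetilde\Omega$.} Take a test function $\varphi\in C_c^\infty(\widetilde\Omega)$. Then
\[
\int_{\widetilde\Omega}\widetilde A\nabla\widetilde w\cdot\nabla\varphi=\int_\Omega A\nabla w\cdot\nabla\varphi .
\]
Integrating by parts, or equivalently using the weak definition of the conormal trace, this equals
\[
\lambda\int_\Omega Bw\varphi+\langle (A\nabla w)\cdot\nu,\varphi\rangle_{\partial\Omega\cap B}.
\]
The boundary pairing vanishes by hypothesis. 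Note that $\varphi$ need not vanish on $\partial\Omega\cap B$, which is exactly why the vanishing conormal datum is needed. The precise meaning of ``$(A\nabla w)\cdot\nu=0$ in the sense of traces'' should be taken as the $H^{-1/2}$ statement that this pairing is zero for all such $\varphi$. If only this weak sense is available, I would use it directly as the definition.

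\textbf{Step 3: apply interior unique continuation.} The new equation has Lipschitz, symmetric, uniformly elliptic principal part and a bounded zeroth-order term, so the weak unique continuation property holds on the connected set $\widetilde\Omega$. This is the classical result of Garofalo--Lin and Aronszajn--Krzywicki--Szarski type, which requires Lipschitz leading coefficients when $n\ge3$. Since $\widetilde w\equiv0$ on the open set $B\setminus\overline\Omega$, it follows that $\widetilde w\equiv0$ in $\widetilde\Omega$, hence $w\equiv0$ in $\Omega$. Connectedness of $\widetilde\Omega$ follows from $\Omega$ being a domain and $B$ meeting $\Omega$.

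\textbf{Main obstacle.} The hardest part is the careful construction of $\widetilde A$: it must be a Lipschitz extension preserving both symmetry and uniform ellipticity. For this I would first extend Lipschitzly and then add $\mu I$ times a cutoff, or simply take $\widetilde A=A\circ\Phi$ with $\Phi$ a $C^2$ reflection across the graph. A second point needing care is the rigorous meaning of the conormal trace, which is why Step 2 is stated via the weak (Green's formula) definition.
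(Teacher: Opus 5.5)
Your proof is correct. The paper gives no argument of its own here; it cites Koch--Tataru for Lipschitz coefficients and Isakov and Adolfsson--Escauriaza for boundary unique continuation. Your zero-extension is the standard reduction that connects those citations, and it works as you describe:
\begin{itemize}
\item vanishing Dirichlet trace on $\partial\Omega\cap B\subset\Gamma_0$ puts $\widetilde w$ in $W^{1,2}(\Omega\cup B)$;
\item vanishing conormal trace, read through Green's formula with test functions whose boundary traces are compactly supported in $\partial\Omega\cap B$, makes $\widetilde w$ a weak solution across the interface;
\item interior weak unique continuation for Lipschitz principal part on the connected set $\Omega\cup B$ finishes the argument.
\end{itemize}
What your route buys is a self-contained proof that isolates the single deep input, interior weak unique continuation, which is the only place the Lipschitz hypothesis on $A$ is used. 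The boundary-UCP literature the paper cites addresses much finer statements than the theorem needs; Adolfsson--Escauriaza, for instance, allows the normal derivative to vanish only on a set of positive measure.

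The step you flag as the main obstacle is not one. Since $\nabla\widetilde w=0$ a.e.\ off $\Omega$, the values of $\widetilde A$ outside $\Omega$ never enter the weak formulation. Any locally Lipschitz, symmetric, uniformly elliptic extension therefore works. For example, McShane-extend $a_{ij}$ for $i\le j$, set $a_{ji}:=a_{ij}$, and shrink $r$ so that ellipticity persists by continuity. No reflection or cutoff is needed, and likewise the extension of the zeroth-order coefficient is irrelevant.

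Finally, rename either the ball or the coefficient, since you call both $B$.
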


\begin{proof}
See Koch--Tataru~\cite{Koch01} (Lipschitz coefficients) and, for boundary unique continuation treatments, see~\cite{Is, AdolfssonEscauriaza1997}.
\end{proof}
We recall a standard quantitative inverse function theorem in Banach spaces; see, e.g., Zeidler \cite{Zeidler86}.

\begin{theorem}\label{thm:inverse_function}
Let $X,Y$ be Banach spaces and let $F:U\subset X\to Y$ be a $C^1$ map on an open set $U$.
Suppose there exists $x_0\in U$ such that $DF(x_0):X\to Y$ is a bounded linear isomorphism.
Then there exist neighborhoods $V$ of $x_0$ and $W$ of $F(x_0)$ such that $F:V\to W$ is a $C^1$ diffeomorphism.
Moreover, if (after possibly shrinking $V$) there exists $\theta\in(0,1)$ such that
\[
\sup_{x\in V}\|DF(x)-DF(x_0)\|\,\|DF(x_0)^{-1}\|\le \theta,
\]
then $F$ is a $C^1$ diffeomorphism $V\to W:=F(V)$ and the inverse mapping $F^{-1}:W\to V$ is Lipschitz. More precisely,
\[
\|F^{-1}(y_1)-F^{-1}(y_2)\|\le \frac{\|DF(x_0)^{-1}\|}{1-\theta}\,\|y_1-y_2\|,
\qquad y_1,y_2\in W.
\]
\end{theorem}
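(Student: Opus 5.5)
The plan is to follow the classical Newton–Kantorovich route. First I would establish local invertibility of $F$ near $x_0$. Set $T:=DF(x_0)^{-1}$ and, for fixed $y\in Y$, define
\[
\Phi_y(x):=x-T\bigl(F(x)-y\bigr).
\]
Fixed points of $\Phi_y$ are exactly the solutions of $F(x)=y$. Differentiating gives
\[
D\Phi_y(x)=T\bigl(DF(x_0)-DF(x)\bigr).
\]
Since $F$ is $C^1$, we can pick a closed ball $\overline{B}_r(x_0)\subset U$ on which
\[
\|DF(x)-DF(x_0)\|\,\|T\|\le\theta<1 .
\]
This is exactly the hypothesis of the second part of Theorem~\ref{thm:inverse_function}; in the first part it comes for free from continuity of $DF$. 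By the mean value inequality, $\Phi_y$ is then a $\theta$-contraction on $\overline{B}_r(x_0)$.

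Next I would show that $\Phi_y$ maps the ball into itself when $y$ is close to $F(x_0)$. We have
\[
\|\Phi_y(x)-x_0\|\le\theta\|x-x_0\|+\|T\|\,\|F(x_0)-y\|\le r
\quad\text{as soon as}\quad \|y-F(x_0)\|\le(1-\theta)r/\|T\|.
\]
Banach's fixed point theorem then gives a unique solution in the ball. Set $W$ to be an open ball around $F(x_0)$ of that radius, and $V:=B_r(x_0)\cap F^{-1}(W)$, which is open. Then $F:V\to W$ is a bijection.

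For the Lipschitz bound, take $x_i=F^{-1}(y_i)$ and write
\[
x_1-x_2=\Phi_{y_1}(x_1)-\Phi_{y_1}(x_2)+T(y_1-y_2),
\]
using $\Phi_{y_1}(x_2)=x_2+T(y_1-y_2)$. This gives $\|x_1-x_2\|\le\theta\|x_1-x_2\|+\|T\|\,\|y_1-y_2\|$, which rearranges to the stated constant $\|T\|/(1-\theta)$. The same computation works for any $V$ satisfying the $\theta$-hypothesis, provided $V$ is convex (or at least the mean value estimate applies on segments). If $V$ is merely given, I would shrink it to a ball, consistent with the phrase ``after possibly shrinking $V$''.

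Finally I would prove $C^1$ regularity of the inverse. For each $x\in V$, $DF(x)=DF(x_0)\bigl(I-T(DF(x_0)-DF(x))\bigr)$ is invertible by a Neumann series, with $\|DF(x)^{-1}\|\le\|T\|/(1-\theta)$. Differentiability of $G:=F^{-1}$ at $y=F(x)$ with $DG(y)=DF(x)^{-1}$ follows from three facts:
\[
G(y+k)-G(y)-DF(x)^{-1}k=-DF(x)^{-1}\bigl[F(x+\ell)-F(x)-DF(x)\ell\bigr],\qquad \ell:=G(y+k)-G(y),
\]
the Lipschitz bound $\|\ell\|\le C\|k\|$, and the $o(\|\ell\|)$ remainder. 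Continuity of $y\mapsto DF(G(y))^{-1}$ then follows because inversion is continuous on isomorphisms and $G$ is continuous. The main point needing care is this step, where the uniform bound on $DF(x)^{-1}$ is needed to convert $o(\|\ell\|)$ into $o(\|k\|)$. Everything else is a routine contraction argument.
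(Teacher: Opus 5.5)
Your proposal is correct: the paper gives no proof of this statement and only cites Zeidler, and your contraction argument for $\Phi_y(x)=x-T(F(x)-y)$ on a ball where $\|T\|\,\|DF(x)-DF(x_0)\|\le\theta$ is the standard proof behind that reference. It also yields the Lipschitz constant $\|T\|/(1-\theta)$ and the $C^1$ regularity of the inverse. Your caveat that the Lipschitz estimate needs the $\theta$-bound on a convex set (e.g.\ a ball) containing $V$ is well taken: for non-convex $V$ the segment $[x_1,x_2]$ may leave the region where the bound holds, so the paper's loosely worded second part should be read with $V$ shrunk in exactly the way you describe.
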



\section{Thin Coating Asymptotics and Effective Robin Conditions}\label{sec:coating}

Thin coating problems are connected with surface reaction models and interface homogenization, and they arise in the analysis of spectral asymptotics and shape sensitivity for nonlinear operators; see~\cite{Daners2000b}.  
Such problems appear in the mathematical modeling of heterogeneous media, where a solid domain $\Omega \subset \mathbb{R}^n$ is covered by a thin layer (or coating) composed of a material with distinct physical characteristics.  
Even a coating of negligible thickness may substantially modify the global response of the system in applications such as heat conduction, diffusion, and elasticity.

\begin{remark}\label{rem:boundary_notation}
In this section, we consider the case where the entire boundary $\Gamma = \partial\Omega$ is coated, corresponding to pure Dirichlet conditions on the outer boundary $\partial\Omega_\varepsilon$. The extension to mixed boundary conditions (Dirichlet on $\Gamma_D$, Robin condition arising from coating on $\gamma$) follows by the same arguments applied locally to the coated portion $\gamma$.
\end{remark}

We recall the classical setting corresponding to the case $p=2$.  
Let $\Omega \subset \mathbb{R}^n$ ($n \ge 2$) be a bounded domain with smooth boundary $\Gamma=\partial \Omega$, and let $\varepsilon > 0$ denote the thickness of the coating.  
The coated configuration is described by
\[
\Omega_\varepsilon = \{ x \in \mathbb{R}^n : \operatorname{dist}(x, \Omega) < \varepsilon \}.
\]
We consider the spectral problem
\[
\begin{cases}
- \nabla \cdot (a_\varepsilon(x)\nabla u_\varepsilon) = \lambda_\varepsilon u_\varepsilon  & \text{in } \Omega_\varepsilon, \\[3pt]
u_\varepsilon = 0  & \text{on } \partial\Omega_\varepsilon,
\end{cases}
\]
where the material coefficient $a_\varepsilon(x)$ is piecewise constant, taking the values $a_{\mathrm{in}}$ in $\Omega$ and $a_{\mathrm{out}}$ in the coating layer $\Omega_\varepsilon \setminus \Omega$, namely
\[
a_\varepsilon(x) =
\begin{cases}
a_{\mathrm{in}}  & x \in \Omega,\\[2pt]
a_{\mathrm{out}}  & x \in \Omega_\varepsilon \setminus \Omega.
\end{cases}
\]

The asymptotic behavior of the eigenpairs $(\lambda_\varepsilon, u_\varepsilon)$ as $\varepsilon \to 0$ has been the subject of extensive study; see~\cite{fk}.  
By asymptotic expansions and variational techniques, one obtains
\[
\lambda_\varepsilon = \lambda_0 + \varepsilon \lambda_1 + o(\varepsilon), 
\qquad 
u_\varepsilon = u_0 + \varepsilon u_1 + o(\varepsilon),
\]
where the limit pair $(\lambda_0, u_0)$ satisfies the effective problem in $\Omega$ endowed with the following Robin-type boundary condition:
\[
\begin{cases}
-\nabla \cdot \big(a_{\mathrm{in}} \nabla u_0\big) = \lambda_0\, u_0, & \text{in } \Omega, \\[4pt]
a_{\mathrm{in}}\, \partial_\nu u_0 + h\, u_0 = 0, & \text{on } \partial\Omega,
\end{cases}
\]
where $h>0$ is an effective boundary parameter depending on the ratio of the material coefficients and the coating thickness $\varepsilon$.  
The limiting problem thus captures the dominant behavior of the eigenmodes in the thinly coated configuration and provides the homogenized spectral characterization of the composite medium.

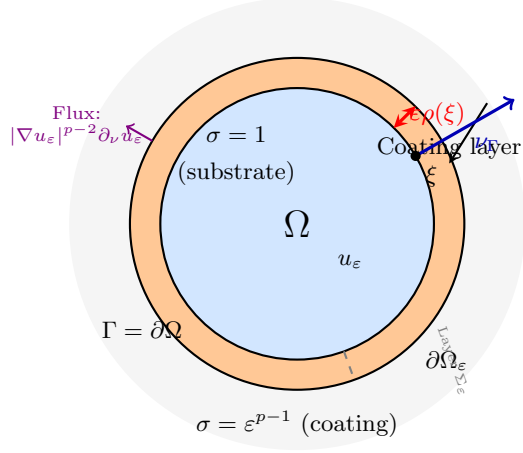
\begin{figure}[ht!]
\centering
\begin{tikzpicture}[scale=1 , thick]

\definecolor{inner}{RGB}{210,230,255}
\definecolor{coating}{RGB}{255,200,150}
\definecolor{outer}{RGB}{245,245,245}

\filldraw[fill=outer, draw=none] (0,0) circle (3.0cm);

\filldraw[fill=coating, draw=black, thick] (0,0) circle (2.2cm);

\filldraw[fill=inner, draw=black, thick] (0,0) circle (1.8cm);

\node[font=\Large] at (0,0) {$\Omega$};

\node[font=\small] at (2 , 1) {Coating layer};
\draw[->, >=stealth] (2.5, 1.6) -- (2.0, 0.8);

\draw[<->, >=stealth, very thick, red] (1.27, 1.27) -- (1.56, 1.56);
\node[red, font=\small] at (1.85, 1.45) {$\varepsilon\rho(\xi)$};

\coordinate (P) at ({1.8*cos(30)}, {1.8*sin(30)});
\coordinate (Pout) at ({2.2*cos(30)}, {2.2*sin(30)});
\draw[->, very thick, blue!70!black] (P) -- ($(P)!1.5cm!(Pout)$);
\node[blue!70!black, font=\small] at (2.5, 1.05) {$\nu_\Gamma$};

\filldraw[black] (P) circle (1.5pt);
\node[below right, font=\small] at (P) {$\xi$};

\node[below left, font=\small] at ({1.8*cos(220)}, {1.8*sin(220)}) {$\Gamma = \partial \Omega$};
\node[below right, font=\small] at ({2.2*cos(-45)}, {2.2*sin(-45)}) {$\partial \Omega_\varepsilon$};

\node[align=center, font=\small] at (-0.8, 0.9) {$\sigma = 1$\\[1mm](substrate)};
\node[align=center, font=\small] at (0, -2.65) {$\sigma = \varepsilon^{p-1}$ (coating)};

\node[font=\small] at (0.7, -0.5) {$u_\varepsilon$};

\draw[->, thick, violet] ({2.2*cos(150)}, {2.2*sin(150)}) -- ({2.6*cos(150)}, {2.6*sin(150)});
\node[violet, font=\scriptsize, align=center] at (-2.9, 1.3) {Flux:\\$|\nabla u_\varepsilon|^{p-2}\partial_\nu u_\varepsilon$};

\draw[dashed, gray] ({1.8*cos(-70)}, {1.8*sin(-70)}) -- ({2.2*cos(-70)}, {2.2*sin(-70)});
\node[gray, font=\tiny, rotate=-70] at (2.05, -1.82) {Layer $\Sigma_\varepsilon$};

\end{tikzpicture}
\caption{The inner domain $\Omega \subset \mathbb{R}^n$ (blue) is surrounded by a thin coating layer  (orange annulus) of variable thickness $\varepsilon\rho(\xi)$, forming the extended domain $\Omega_\varepsilon = \Omega \cup \Sigma_\varepsilon$.
The conductivity is $\sigma = 1$ in $\Omega$ and $\sigma = \varepsilon^{p-1}$ in the coating $\Sigma_\varepsilon$. The choice of the exponent $(p-1)$ ensures that the effective boundary condition inherits the correct scaling behavior of the nonlinear $p$-Laplacian.}
\label{fig:p_laplace_thin_coating}
\end{figure}


The thin coating asymptotics provide a natural motivation for studying Robin-type eigenvalue problems. From an inverse point of view, the recovery of $h(x)$ from spectral data $(\lambda_p(h), u(h))$ is closely related to identifying the physical properties of the coating, such as its local thickness or conductivity.  
Hence, the thin coating problem provides both a physical and analytical justification for the study of inverse Robin eigenvalue problems in the nonlinear $p$-Laplace framework developed in this work.

In the nonlinear case, replacing the Laplace operator by the $p$-Laplace operator 
\[
-\nabla \cdot (|\nabla u|^{p-2}\nabla u) = \lambda |u|^{p-2}u,
\]
leads to the so-called \emph{$p$-Robin eigenvalue problem}, where the boundary condition takes the form
\[
|\nabla u|^{p-2}\partial_\nu u + h(x)|u|^{p-2}u = 0 \quad \text{on } \partial\Omega.
\]
Here, the coefficient $h(x)$ can be interpreted as an \emph{effective boundary impedance} arising from the thin coating limit.

Let $\Omega  \subset \R^n$ ($n \geq 2$) be a bounded domain with a smooth connected boundary $\Gamma$. For sufficiently small $\varepsilon > 0$, define:
\begin{align}
\Sigma_\varepsilon &= \{x \in \R^n : x = \xi + t\,\nu_\Gamma(\xi) \text{ for } \xi \in \Gamma, \, 0 < t < \varepsilon\rho(\xi)\}, \label{eq:thin_layer}\\
\Omega_\varepsilon &= \Omega  \cup \Sigma_\varepsilon \cup \Gamma, \notag
\end{align}
where $\rho \in C^2(\Gamma)$ is a given positive function and $\nu_\Gamma$ is the outward unit normal vector to $\Gamma$.

Consider the two-phase $p$-Laplacian eigenvalue problem on $\Omega_\varepsilon$:
\begin{equation}\label{eq:two-phase}
\begin{cases}
-\dive(\sigma_\varepsilon|\nabla \Phi_\varepsilon|^{p-2}\nabla \Phi_\varepsilon) 
  = \Lambda_\varepsilon |\Phi_\varepsilon|^{p-2}\Phi_\varepsilon & \text{in } \Omega_\varepsilon,\\
\Phi_\varepsilon = 0 & \text{on } \partial \Omega_\varepsilon,
\end{cases}
\end{equation}
where $\sigma_\varepsilon = \sigma_\varepsilon(x)$ is a piecewise constant function:
\[
\sigma_\varepsilon(x) = \begin{cases}
1, & x \in \Omega,\\
\varepsilon^{p-1}, & x \in \Sigma_\varepsilon.
\end{cases}
\]
We denote by $\Lambda_1(\varepsilon)$ the principal (i.e., smallest) eigenvalue 
of problem \eqref{eq:two-phase}, and by $\Phi_\varepsilon$ the corresponding 
eigenfunction, normalized by $\|\Phi_\varepsilon\|_{L^p(\Omega_\varepsilon)} = 1$ 
and chosen to be positive in $\Omega_\varepsilon$.

\begin{remark}\label{rem:homogeneity}
The exponent $(p-1)$ is crucial for proper scaling. The $p$-Laplacian has natural homogeneity: if $u$ satisfies $-\operatorname{div}(|\nabla u|^{p-2}\nabla u) = f$, then $u_\lambda(x) = \lambda u(x)$ satisfies
\[
-\operatorname{div}(|\nabla u_\lambda|^{p-2}\nabla u_\lambda) = \lambda^{p-1}f.
\]
This homogeneity property is preserved in the thin coating problem by the choice $\sigma_\varepsilon = \varepsilon^{p-1}$, which ensures that the effective boundary condition inherits the correct scaling behavior.
\end{remark}

The interface $\Gamma = \partial\Omega$ is an interior interface in $\Omega_\varepsilon$. Across the interface $\Gamma$, the transmission conditions are
\[
[\![ \Phi_\varepsilon ]\!] = 0,
\qquad
[\![\sigma_\varepsilon |\nabla \Phi_\varepsilon|^{\,p-2}\,\partial_\nu \Phi_\varepsilon ]\!] = 0,
\]
where $\nu$ is the unit normal on $\Gamma$ pointing from $\Omega$ into the coating $\Sigma_\varepsilon$.


 \begin{lemma}\label{lem:layer}
Let $\Omega \subset \mathbb{R}^{n}$ be a bounded domain with $C^{2}$ boundary
$\Gamma=\partial\Omega$. Let $\Sigma_\varepsilon$ be the thin layer attached to $\Gamma$
of thickness $\varepsilon\rho(\xi)$, where $\rho\in C^{2}(\Gamma)$ and $\rho>0$.
Fix $p\in(1,\infty)$ and let $u\in C^{1,\alpha}(\overline{\Omega})$ for some $\alpha\in(0,1)$.
Let $\tilde u_\varepsilon$ be an extension of $u$ to $\Omega\cup\Sigma_\varepsilon$ with
$\tilde u_\varepsilon\in W^{1,p}(\Omega\cup\Sigma_\varepsilon)$.

\begin{enumerate}[label=\textup{(\Alph*)}]
\item If the extension satisfies a uniform \emph{full-gradient} bound
$\sup_{\Sigma_\varepsilon}|\nabla \tilde u_\varepsilon|\le C$ for some constant $C>0$
independent of $\varepsilon$, then
\[
\int_{\Sigma_\varepsilon} |\nabla \tilde u_\varepsilon|^p \, dx = O(\varepsilon)
\qquad \text{as }\varepsilon\to0,
\]
and more precisely $\displaystyle \int_{\Sigma_\varepsilon} |\nabla \tilde u_\varepsilon|^p \, dx
\le C^p\,|\Sigma_\varepsilon|$.
\item If $\tilde u_\varepsilon$ is the affine extension defined by
\begin{equation}\label{eq:affine-extension}
\tilde{u}_\varepsilon(\xi, t) = u(\xi)\left(1 - \frac{t}{\varepsilon\rho(\xi)}\right),
\qquad 0 < t < \varepsilon\rho(\xi),
\end{equation}
which vanishes on the outer boundary $\partial\Omega_\varepsilon$, then
\[
\int_{\Sigma_\varepsilon} \varepsilon^{p-1}|\nabla \tilde{u}_\varepsilon|^p \, dx
= \int_\Gamma \frac{|u(\xi)|^p}{\rho(\xi)^{p-1}} \, d\sigma + O(\varepsilon)
\qquad \text{as } \varepsilon \to 0.
\]
In particular, the right-hand side equals
$\displaystyle \int_\Gamma \frac{|u(\xi)|^p}{\rho(\xi)^{p-1}} \, d\sigma + o(1)$.
\end{enumerate}
\end{lemma}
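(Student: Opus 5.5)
The proof works in normal (Fermi) coordinates $x=\xi+t\,\nu_\Gamma(\xi)$ on the collar. Since $\Gamma$ is $C^2$ and compact, for $\varepsilon$ small the map $(\xi,t)\mapsto \xi+t\nu_\Gamma(\xi)$ is a $C^1$ diffeomorphism from $\{(\xi,t):\xi\in\Gamma,\ 0<t<\varepsilon\rho(\xi)\}$ onto $\Sigma_\varepsilon$. Its volume element is
\[
dx = J(\xi,t)\,d\sigma(\xi)\,dt,\qquad J(\xi,t)=\prod_{i=1}^{n-1}\bigl(1+t\kappa_i(\xi)\bigr)=1+O(t),
\]
uniformly in $\xi$, where $\kappa_i$ are the principal curvatures of $\Gamma$.

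\textbf{Part (A).} The pointwise bound gives $\int_{\Sigma_\varepsilon}|\nabla\tilde u_\varepsilon|^p\,dx\le C^p|\Sigma_\varepsilon|$ directly. Integrating $J$ over the collar,
\[
|\Sigma_\varepsilon|=\int_\Gamma\int_0^{\varepsilon\rho(\xi)}J\,dt\,d\sigma\le \varepsilon\,\|\rho\|_\infty|\Gamma|\,(1+O(\varepsilon)) = O(\varepsilon),
\]
which proves (A).

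\textbf{Part (B).} First decompose the gradient. In these coordinates $\nabla = \nu_\Gamma\,\partial_t + \nabla_{\Gamma_t}$, where the tangential part is the surface gradient on the parallel surface $\Gamma_t$. The metric correction is $(I-tW)^{-1}$ with $W$ the Weingarten map, so it is $I+O(t)$. For the affine extension,
\[
\partial_t\tilde u_\varepsilon=-\frac{u(\xi)}{\varepsilon\rho(\xi)},\qquad
\nabla_\xi\tilde u_\varepsilon=\nabla_\Gamma u\,\Bigl(1-\tfrac{t}{\varepsilon\rho}\Bigr)+u\,\frac{t\,\nabla_\Gamma\rho}{\varepsilon\rho^2}.
\]
Because $u\in C^{1}(\overline\Omega)$, $\rho\in C^2$ and $0<t<\varepsilon\rho$, the tangential part is bounded: $|T|\le M$ with $M$ independent of $\varepsilon$. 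The normal part $N=-u/(\varepsilon\rho)$ is of size $1/\varepsilon$. Hence $\varepsilon\nabla\tilde u_\varepsilon = \varepsilon N\nu+\varepsilon T$, with $\varepsilon N=-u/\rho$ bounded and $|\varepsilon T|\le M\varepsilon$.

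Next compare $p$-th powers. By the mean value inequality,
\[
\bigl||a+b|^p-|a|^p\bigr|\le p(|a|+|b|)^{p-1}|b|.
\]
Applied with $a=\varepsilon N\nu$ and $b=\varepsilon T$, this gives
\[
\varepsilon^p|\nabla\tilde u_\varepsilon|^p=\frac{|u(\xi)|^p}{\rho(\xi)^p}+O(\varepsilon)
\]
uniformly on $\Sigma_\varepsilon$. The constant in $O(\varepsilon)$ depends on $\|u\|_{C^1}$, $\|\rho\|_{C^1}$, $\inf\rho$ and $p$. This holds for all $p>1$, since $|a|+|b|$ stays bounded.

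Then integrate. Writing $\varepsilon^{p-1}|\nabla\tilde u_\varepsilon|^p=\varepsilon^{-1}\cdot\varepsilon^p|\nabla\tilde u_\varepsilon|^p$,
\[
\int_{\Sigma_\varepsilon}\varepsilon^{p-1}|\nabla\tilde u_\varepsilon|^p\,dx=\frac1\varepsilon\int_\Gamma\int_0^{\varepsilon\rho(\xi)}\Bigl(\frac{|u|^p}{\rho^p}+O(\varepsilon)\Bigr)(1+O(t))\,dt\,d\sigma .
\]
The leading term integrates in $t$ to $\varepsilon\rho\cdot |u|^p/\rho^p$, which yields $\int_\Gamma|u|^p\rho^{1-p}\,d\sigma$. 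Every error term carries an extra factor $\varepsilon$ after the $t$-integration over an interval of length $O(\varepsilon)$ and division by $\varepsilon$. This gives the claimed $O(\varepsilon)$ remainder, and $O(\varepsilon)=o(1)$.

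\textbf{Main obstacle.} The delicate step is bounding the tangential gradient and its cross-term with the large normal component, and making sure the $1/\varepsilon$ blow-up appears only in the normal direction. I would handle this through the explicit formula above, using $t/(\varepsilon\rho)\le1$ so that the term $u\,t\nabla_\Gamma\rho/(\varepsilon\rho^2)$ stays bounded. Combined with the mean value inequality, this avoids any expansion of $|a+b|^p$ that would need $p\ge2$.
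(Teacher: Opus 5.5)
Your proposal is correct and follows essentially the same route as the paper: normal coordinates with $J=1+O(t)$, the bound $C^p|\Sigma_\varepsilon|$ for (A), and for (B) splitting $\nabla\tilde u_\varepsilon$ into the $O(1/\varepsilon)$ normal part $-u/(\varepsilon\rho)$ and a bounded tangential part (using $t/\varepsilon\le\rho$), then integrating across the layer. The only difference is cosmetic. You rescale by $\varepsilon$ and use the two-sided mean-value bound $\bigl||a+b|^p-|a|^p\bigr|\le p(|a|+|b|)^{p-1}|b|$, whereas the paper controls the remainder $R_\varepsilon$ via the one-sided inequality $(a+b)^p\le a^p+C_p(a^{p-1}b+b^p)$, so your version gives the two-sided remainder estimate directly.
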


\begin{proof}
We use boundary normal coordinates: for $\varepsilon>0$ sufficiently small, each
$x\in\Sigma_\varepsilon$ can be written uniquely as
\[
x=\Phi(\xi,t):=\xi+t\,\nu(\xi),\qquad \xi\in\Gamma,\quad t\in(0,\varepsilon\rho(\xi)),
\]
where $\nu(\xi)$ is the outward unit normal to $\Gamma$.
The Jacobian satisfies (uniformly in $\xi$ for $t$ small)
\begin{equation}\label{eq:jacobian}
dx = J(\xi,t)\,dt\,d\sigma(\xi),\qquad J(\xi,t)=1+O(t),
\end{equation}
and since $0\le t\le \varepsilon\rho(\xi)$ with $\rho$ bounded on $\Gamma$, we also have
$J(\xi,t)=1+O(\varepsilon)$ uniformly on $\Sigma_\varepsilon$.

\medskip
\noindent\textbf{Proof of (A).}
Using \eqref{eq:jacobian},
\[
|\Sigma_\varepsilon|=\int_\Gamma\int_0^{\varepsilon\rho(\xi)}(1+O(t))\,dt\,d\sigma
=\varepsilon\int_\Gamma\rho(\xi)\,d\sigma+O(\varepsilon^2).
\]
If $\sup_{\Sigma_\varepsilon}|\nabla \tilde u_\varepsilon|\le C$, then
\[
\int_{\Sigma_\varepsilon}|\nabla \tilde u_\varepsilon|^p\,dx
\le C^p\,|\Sigma_\varepsilon|=O(\varepsilon),
\]
which proves (A).

\medskip
\noindent\textbf{Proof of (B).}
Let $\tilde u_\varepsilon$ be given by \eqref{eq:affine-extension}. Then
\[
\partial_t \tilde u_\varepsilon(\xi,t)=-\frac{u(\xi)}{\varepsilon\rho(\xi)}.
\]
For tangential derivatives, note that on $\Gamma$ (with $\nabla_\Gamma$ denoting the
surface gradient),
\[
\nabla_\Gamma \tilde u_\varepsilon(\xi,t)
=\Bigl(1-\frac{t}{\varepsilon\rho(\xi)}\Bigr)\nabla_\Gamma u(\xi)
+u(\xi)\,\frac{t}{\varepsilon}\,\nabla_\Gamma\!\Bigl(\frac{1}{\rho(\xi)}\Bigr).
\]
Since $0\le t/\varepsilon\le \rho(\xi)$ and $\rho,\nabla_\Gamma \rho$ are bounded on $\Gamma$,
there exists $C_0>0$ independent of $\varepsilon$ such that
\begin{equation}\label{eq:tangential_bound}
\sup_{\Sigma_\varepsilon}|\nabla_\Gamma \tilde u_\varepsilon|\le C_0.
\end{equation}
Moreover, the Euclidean gradient in the tubular neighborhood satisfies
\[
\nabla \tilde u_\varepsilon(\xi,t)=\partial_t \tilde u_\varepsilon(\xi,t)\,\nu(\xi)
+ \bigl(I+O(t)\bigr)\nabla_\Gamma \tilde u_\varepsilon(\xi,t),
\]
hence, using \eqref{eq:tangential_bound} and $t=O(\varepsilon)$,
\begin{equation}\label{eq:grad_decomp}
|\nabla \tilde u_\varepsilon(\xi,t)|^p
= \Bigl|\frac{u(\xi)}{\varepsilon\rho(\xi)}\Bigr|^p + R_\varepsilon(\xi,t),
\end{equation}
where $R_\varepsilon$ satisfies the pointwise bound
\[
|R_\varepsilon(\xi,t)|
\le C\Bigl(\Bigl|\frac{u(\xi)}{\varepsilon\rho(\xi)}\Bigr|^{p-1}+1\Bigr)
\]
for a constant $C$ independent of $\varepsilon$ (this follows from the inequality
$(a+b)^p\le a^p + C_p(a^{p-1}b+b^p)$ with $a,b\ge0$ and the bound \eqref{eq:tangential_bound}).

Multiply \eqref{eq:grad_decomp} by $\varepsilon^{p-1}$ and integrate using \eqref{eq:jacobian}:
\[
\int_{\Sigma_\varepsilon}\varepsilon^{p-1}|\nabla \tilde u_\varepsilon|^p\,dx
= I_\varepsilon + E_\varepsilon,
\]
where
\[
I_\varepsilon=\int_\Gamma\int_0^{\varepsilon\rho(\xi)}
\varepsilon^{p-1}\Bigl|\frac{u(\xi)}{\varepsilon\rho(\xi)}\Bigr|^p J(\xi,t)\,dt\,d\sigma(\xi),
\qquad
E_\varepsilon=\int_\Gamma\int_0^{\varepsilon\rho(\xi)} \varepsilon^{p-1}R_\varepsilon(\xi,t)
J(\xi,t)\,dt\,d\sigma(\xi).
\]
For the main term, since $J(\xi,t)=1+O(\varepsilon)$,
\[
I_\varepsilon
=\int_\Gamma \varepsilon^{p-1}\frac{|u(\xi)|^p}{(\varepsilon\rho(\xi))^p}
\left(\int_0^{\varepsilon\rho(\xi)}(1+O(\varepsilon))\,dt\right)d\sigma(\xi)
=\int_\Gamma \frac{|u(\xi)|^p}{\rho(\xi)^{p-1}}\,d\sigma(\xi)+O(\varepsilon).
\]
For the error term, using the bound on $R_\varepsilon$, the uniform boundedness of $J$,
and the fact that $\rho$ is bounded above/below on $\Gamma$, we obtain
\[
|E_\varepsilon|
\le C\int_\Gamma\int_0^{\varepsilon\rho(\xi)}
\varepsilon^{p-1}\Bigl(\Bigl|\frac{u(\xi)}{\varepsilon\rho(\xi)}\Bigr|^{p-1}+1\Bigr)\,dt\,d\sigma(\xi)
\le C\varepsilon\int_\Gamma (|u(\xi)|^{p-1}+1)\,d\sigma(\xi)
=O(\varepsilon).
\]
Combining the estimates yields
\[
\int_{\Sigma_\varepsilon}\varepsilon^{p-1}|\nabla \tilde u_\varepsilon|^p\,dx
=\int_\Gamma \frac{|u(\xi)|^p}{\rho(\xi)^{p-1}}\,d\sigma(\xi)+O(\varepsilon),
\]
which proves (B).
\end{proof}

 
\begin{lemma} \label{lem:robin_coeff}
Consider the two-phase problem \eqref{eq:two-phase} with coating thickness $\varepsilon\rho(\xi)$ and $\sigma = \varepsilon^{p-1}$ in the coating $\Sigma_\varepsilon$. As $\varepsilon \to 0$, the effective Robin coefficient is given by
\[
h(\xi) = \frac{1}{\rho(\xi)^{p-1}}.
\]
For constant thickness $\rho \equiv 1$, this reduces to $h = 1$.
\end{lemma}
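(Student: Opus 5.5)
The plan is to identify the effective coefficient by computing the limit of the coating contribution to the Rayleigh quotient of \eqref{eq:two-phase}. Concretely, I would aim to show
\[
\Lambda_1(\varepsilon)\;\longrightarrow\;\inf_{v\in W^{1,p}(\Omega),\,\|v\|_{L^p(\Omega)}=1}\Bigl(\int_\Omega|\nabla v|^p\,dx+\int_\Gamma \rho^{1-p}|v|^p\,d\sigma\Bigr),
\]
which is $\lambda_1(h)$ with $h=\rho^{1-p}$ (here $\Gamma_D=\emptyset$, following Remark~\ref{rem:boundary_notation}).

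The \emph{upper bound} is a recovery-sequence argument. Fix a smooth (hence $C^{1,\alpha}$) test function $v$ on $\overline\Omega$ and extend it to $\Omega_\varepsilon$ by the affine extension \eqref{eq:affine-extension}. This extension vanishes on $\partial\Omega_\varepsilon$, so it is admissible for \eqref{eq:two-phase}. Lemma~\ref{lem:layer}(B) then gives
\[
\int_{\Omega_\varepsilon}\sigma_\varepsilon|\nabla\tilde v_\varepsilon|^p
=\int_\Omega|\nabla v|^p+\int_\Gamma\rho^{1-p}|v|^p\,d\sigma+O(\varepsilon).
\]
The mass in the layer is $O(\varepsilon)$ because $\tilde v_\varepsilon$ is bounded and $|\Sigma_\varepsilon|=O(\varepsilon)$. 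Hence $\limsup_{\varepsilon\to0}\Lambda_1(\varepsilon)\le\lambda_1(\rho^{1-p})$, using density of smooth functions in $W^{1,p}(\Omega)$ and continuity of the trace.

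The \emph{lower bound} is the step I expect to be the main obstacle, since it needs a one-dimensional inequality across the layer. For any $\Phi\in W^{1,p}_0(\Omega_\varepsilon)$, smooth by density, and a.e.\ $\xi\in\Gamma$, I would write
\[
\Phi(\xi)=-\int_0^{\varepsilon\rho(\xi)}\partial_t\Phi(\xi,t)\,dt .
\]
Hölder's inequality then gives
\[
|\Phi(\xi)|^p\le(\varepsilon\rho(\xi))^{p-1}\int_0^{\varepsilon\rho}|\partial_t\Phi|^p\,dt .
\]
Multiplying by $\rho^{1-p}$, integrating over $\Gamma$, and using $J=1+O(\varepsilon)$ from \eqref{eq:jacobian} yields
\[
\int_{\Sigma_\varepsilon}\varepsilon^{p-1}|\nabla\Phi|^p\,dx\ge(1-C\varepsilon)\int_\Gamma\rho^{1-p}|\Phi|^p\,d\sigma .
\]
Applying this to the normalized eigenfunctions $\Phi_\varepsilon$ bounds $\|\Phi_\varepsilon\|_{W^{1,p}(\Omega)}$ uniformly, since $\Lambda_1(\varepsilon)$ is bounded by the upper bound step. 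I would extract a subsequence with $\Phi_\varepsilon|_\Omega\rightharpoonup u$ in $W^{1,p}(\Omega)$, strongly in $L^p(\Omega)$ and in $L^p(\Gamma)$ by compactness of the trace.

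To close, I need $\int_{\Sigma_\varepsilon}|\Phi_\varepsilon|^p\to0$ so that $\|u\|_{L^p(\Omega)}=1$. I would get this from the same 1D argument: for $t<\varepsilon\rho(\xi)$,
\[
|\Phi_\varepsilon(\xi,t)|^p\le(\varepsilon\rho)^{p-1}\int_0^{\varepsilon\rho}|\partial_t\Phi_\varepsilon|^p\,ds,
\]
so
\[
\int_{\Sigma_\varepsilon}|\Phi_\varepsilon|^p\le C\varepsilon^{p}\int_{\Sigma_\varepsilon}|\nabla\Phi_\varepsilon|^p=C\varepsilon\int_{\Sigma_\varepsilon}\varepsilon^{p-1}|\nabla\Phi_\varepsilon|^p\le C\varepsilon\Lambda_1(\varepsilon)\to0 .
\]
Weak lower semicontinuity of $\int_\Omega|\nabla\cdot|^p$ together with the trace convergence then gives
\[
\liminf_{\varepsilon\to0}\Lambda_1(\varepsilon)\ge\int_\Omega|\nabla u|^p+\int_\Gamma\rho^{1-p}|u|^p\,d\sigma\ge\lambda_1(\rho^{1-p}).
\]
Combining both bounds shows that $u$ is the minimizer, hence by Proposition~\ref{prop:principal_eigenvalue} the positive normalized eigenfunction of the Robin problem with $h=\rho^{-(p-1)}$. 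Simplicity identifies the limit, so the whole family converges. Taking $\rho\equiv1$ gives $h=1$.
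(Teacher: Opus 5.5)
Your argument is correct, but it takes a different route from the paper's proof of this lemma.

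\textbf{What the paper does.} The paper's proof is a pure energy-matching step. It uses Lemma~\ref{lem:layer}(B) to compute that the affine extension of a trace $u$ costs $\int_\Gamma \rho^{1-p}|u|^p\,d\sigma$ in the coating. It then equates this with the Robin energy $\int_\Gamma h|u|^p\,d\sigma$ for all admissible traces and reads off $h=\rho^{1-p}$. That computes the energy of one particular competitor, which is only your upper-bound half. It never shows that the affine profile is asymptotically optimal. So the lemma's proof identifies $h$ heuristically rather than proving that the limit problem is the Robin problem.

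\textbf{What your argument adds.} You give ``effective coefficient'' its precise meaning: $\Lambda_1(\varepsilon)\to\lambda_1(\rho^{1-p})$ and $\Phi_\varepsilon|_\Omega\to u_1$. You prove this with a recovery sequence plus the slice-wise H\"older inequality, which is exactly the structure of the paper's later proof of Theorem~\ref{thm:asymptotic}. Your proof of the lemma therefore already contains that theorem.

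\textbf{Comparison with the paper's proof of Theorem~\ref{thm:asymptotic}.} Your version is somewhat cleaner in three ways:
\begin{enumerate}[(i)]
\item Your lower bound carries an explicit factor $(1-C\varepsilon)$, where the paper appeals without quantification to a ``one-dimensional variational principle'' with an $o(1)$ error.
\item Using smooth test functions plus density in the upper bound avoids needing $C^{1,\alpha}(\overline\Omega)$ regularity of the Robin eigenfunction before applying Lemma~\ref{lem:layer}(B).
\item Identifying the limit as a minimizer of the Robin Rayleigh quotient makes the Minty-type passage to the limit in the interior equation unnecessary, since a minimizer automatically satisfies the Euler--Lagrange equation together with the Robin condition.
\end{enumerate}
The paper's matching argument buys only brevity.

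\textbf{Two small points to state explicitly.}
\begin{enumerate}[(i)]
\item Note that $u\ge 0$, as the a.e.\ limit of the positive $\Phi_\varepsilon$, so that the simplicity statement of Proposition~\ref{prop:principal_eigenvalue} applies.
\item First pass to a subsequence realizing $\liminf_{\varepsilon\to0}\Lambda_1(\varepsilon)$, and only then extract the compact subsequence.
\end{enumerate}
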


\begin{proof}
By Lemma~\ref{lem:layer}(B), the energy contribution from the coating layer for the affine extension of a function with trace $u(\xi)$ on $\Gamma$ satisfies
\[
\int_{\Sigma_\varepsilon} \varepsilon^{p-1} |\nabla \tilde{u}_\varepsilon|^p \, dx \to \int_\Gamma \frac{|u(\xi)|^p}{\rho(\xi)^{p-1}} \, d\sigma \qquad \text{as } \varepsilon \to 0.
\]
The limiting Robin problem (with boundary condition $  |\nabla u|^{p-2} \dfrac{\partial u}{\partial \nu}+h(x)|u|^{p-2}u = 0$ on $\Gamma$) has an associated boundary energy
\[
E_{\text{Robin}} = \int_\Gamma h(\xi)|u(\xi)|^p \, d\sigma.
\]
To determine the effective Robin coefficient, we match the coating energy with the limiting Robin energy. Setting
\[
\int_\Gamma \frac{|u(\xi)|^p}{\rho(\xi)^{p-1}} \, d\sigma = \int_\Gamma h(\xi)|u(\xi)|^p \, d\sigma
\]
and noting that this must hold for all admissible traces $u(\xi)$, we conclude
\[
h(\xi) = \frac{1}{\rho(\xi)^{p-1}}.
\]
 
\end{proof}

To compare with the classical result, we note that for $p=2$, the result by Friedman \cite{Fr} establishes an effective Robin coefficient by asymptotic analysis of thin-coating problems. Our Theorem~\ref{thm:asymptotic} recovers this classical result: when $p=2$, we obtain $h = 1/\rho$, which matches Friedman's conclusion. 

\begin{corollary} 
\label{cor:constant_thickness}
When $\rho \equiv 1$, i.e., uniform coating thickness, the effective Robin coefficient simplifies to $h = 1$, and the limiting problem becomes:
\begin{equation}\label{eq:robin_constant_thickness}
\begin{cases}
-\operatorname{div}(|\nabla u|^{p-2}\nabla u) = \mu|u|^{p-2}u & \text{in } \Omega, \\[8pt]
\displaystyle |\nabla u|^{p-2}\frac{\partial u}{\partial \nu_\Gamma} +  |u|^{p-2}u = 0 & \text{on } \Gamma.
\end{cases}
\end{equation}
\end{corollary}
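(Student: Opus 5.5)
The corollary is a direct specialization of Lemma~\ref{lem:robin_coeff}, so the plan is to substitute $\rho\equiv 1$ there and then check that the resulting effective problem has exactly the form \eqref{eq:robin_constant_thickness}. With constant thickness the layer $\Sigma_\varepsilon$ is the uniform collar $\{\xi+t\nu_\Gamma(\xi):0<t<\varepsilon\}$. The affine extension \eqref{eq:affine-extension} then becomes $\tilde u_\varepsilon(\xi,t)=u(\xi)(1-t/\varepsilon)$, and the term $u(\xi)\frac{t}{\varepsilon}\nabla_\Gamma(1/\rho)$ in the tangential gradient vanishes identically. Hence Lemma~\ref{lem:layer}(B) gives
\[
\int_{\Sigma_\varepsilon}\varepsilon^{p-1}|\nabla\tilde u_\varepsilon|^p\,dx=\int_\Gamma |u|^p\,d\sigma+O(\varepsilon).
\]
Matching this with the Robin boundary energy $\int_\Gamma h|u|^p\,d\sigma$ as in Lemma~\ref{lem:robin_coeff} yields $h=1/\rho^{p-1}=1$.

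Next I identify the limiting equation. The effective energy is $\int_\Omega|\nabla u|^p\,dx+\int_\Gamma|u|^p\,d\sigma$, constrained by $\int_\Omega|u|^p\,dx=1$. Its minimizer over $W^{1,p}(\Omega)$, with no Dirichlet part since the whole boundary is coated (Remark~\ref{rem:boundary_notation}), satisfies the Euler--Lagrange equation
\[
\int_\Omega|\nabla u|^{p-2}\nabla u\cdot\nabla\varphi\,dx+\int_\Gamma|u|^{p-2}u\varphi\,d\sigma=\mu\int_\Omega|u|^{p-2}u\varphi\,dx
\]
for all $\varphi\in W^{1,p}(\Omega)$. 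This is Definition~\ref{def:weak_formulation} with $h\equiv1$ and $\Gamma_D=\emptyset$. Since $h\equiv 1$ is smooth and the interface $\zeta$ is empty, Lemma~\ref{lem:regularity} gives $u\in C^{1,\beta}(\overline\Omega)$. Integrating by parts with test functions compactly supported in $\Omega$ gives the interior equation, and then general test functions give the pointwise boundary condition $|\nabla u|^{p-2}\partial_{\nu_\Gamma}u+|u|^{p-2}u=0$, which is \eqref{eq:robin_constant_thickness}.

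The main point of care is the identification of $\mu$ with $\lim_{\varepsilon\to 0}\Lambda_1(\varepsilon)$, which Lemma~\ref{lem:robin_coeff} asserts only at the level of energy matching. The upper bound comes from using the affine extension of the Robin minimizer as a test function in the two-phase Rayleigh quotient. The lower bound is the harder direction and would normally need a $\Gamma$-convergence type argument from Theorem~\ref{thm:asymptotic}. For this corollary I would invoke that limit as established in the paper and only perform the specialization $\rho\equiv1$, which involves no additional difficulty.
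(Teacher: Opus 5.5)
Your proposal is correct and takes the paper's route. The corollary is just the specialization $\rho\equiv 1$ of Lemma~\ref{lem:robin_coeff}, which already records $h=1/\rho^{p-1}=1$. The limit itself comes from Theorem~\ref{thm:asymptotic}, whose proof does not use the corollary, so invoking it is not circular. Your additional Euler--Lagrange and regularity step, which recovers the pointwise form of \eqref{eq:robin_constant_thickness}, goes beyond what the paper writes but is sound.
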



The next theorem shows the asymptotic behavior of eigenvalues. 

\begin{theorem} 
\label{thm:asymptotic}
Let $\Lambda_1(\varepsilon)$ denote the principal eigenvalue of the two-phase problem~\eqref{eq:two-phase}, and let $\Phi_\varepsilon$ be the corresponding normalized eigenfunction. Let $(\mu_1, u_1)$ denote the principal eigenpair of the Robin eigenvalue problem
\begin{equation}\label{eq:robin_limit1}
\begin{cases}
-\nabla \cdot (|\nabla u|^{p-2}\nabla u) = \mu |u|^{p-2}u & \text{in } \Omega, \\[4pt]
|\nabla u|^{p-2}\dfrac{\partial u}{\partial \nu_\Gamma} + \dfrac{1}{\rho(\xi)^{p-1}}|u|^{p-2}u = 0 & \text{on } \Gamma,
\end{cases}
\end{equation}
with normalization $\|u_1\|_{L^p(\Omega)} = 1$ and $u_1 > 0$ in $\Omega$. Then
\[
\Lambda_1(\varepsilon) = \mu_1 + o(1) \quad \text{as } \varepsilon \to 0,
\]
and $\Phi_\varepsilon|_\Omega \rightharpoonup u_1$ weakly in $W^{1,p}(\Omega)$.
\end{theorem}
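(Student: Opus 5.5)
The proof is a $\Gamma$-convergence type argument in two halves: an upper bound $\limsup_{\varepsilon\to0}\Lambda_1(\varepsilon)\le\mu_1$ from an explicit test function, and a lower bound $\liminf_{\varepsilon\to0}\Lambda_1(\varepsilon)\ge\mu_1$ from compactness together with a lower-semicontinuity estimate on the layer energy. Throughout, $\Lambda_1(\varepsilon)$ is the infimum of
\[
\mathcal R_\varepsilon(v)=\frac{\int_\Omega|\nabla v|^p\,dx+\varepsilon^{p-1}\int_{\Sigma_\varepsilon}|\nabla v|^p\,dx}{\int_{\Omega_\varepsilon}|v|^p\,dx}
\]
over $v\in W^{1,p}_0(\Omega_\varepsilon)$. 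By Proposition~\ref{prop:principal_eigenvalue} (with $\Gamma_D=\emptyset$ and $h=\rho^{-(p-1)}\ge c>0$), $\mu_1$ is the infimum of the Robin Rayleigh quotient with $h=\rho^{1-p}$.

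\textbf{Upper bound.} Take $u_1$, which lies in $C^{1,\alpha}(\overline\Omega)$ by Lemma~\ref{lem:regularity}, and its affine extension $\tilde u_\varepsilon$ from \eqref{eq:affine-extension}. Lemma~\ref{lem:layer}(B) gives
\[
\varepsilon^{p-1}\int_{\Sigma_\varepsilon}|\nabla\tilde u_\varepsilon|^p\,dx=\int_\Gamma\rho^{1-p}|u_1|^p\,d\sigma+O(\varepsilon).
\]
The denominator is $1+\int_{\Sigma_\varepsilon}|\tilde u_\varepsilon|^p\,dx=1+O(\varepsilon)$, since $\tilde u_\varepsilon$ is bounded and $|\Sigma_\varepsilon|=O(\varepsilon)$. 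Hence $\Lambda_1(\varepsilon)\le\mu_1+O(\varepsilon)$.

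\textbf{Compactness.} From the upper bound, $\int_\Omega|\nabla\Phi_\varepsilon|^p\,dx\le C$ and $\|\Phi_\varepsilon\|_{L^p(\Omega)}\le1$. So $\Phi_\varepsilon|_\Omega$ is bounded in $W^{1,p}(\Omega)$, and along a subsequence it converges weakly in $W^{1,p}(\Omega)$, strongly in $L^p(\Omega)$, and strongly in $L^p(\Gamma)$ by compactness of the trace, to some $u\ge0$.

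\textbf{Lower bound on the layer energy (main obstacle).} This is the step I expect to be hardest; it is the reverse inequality to Lemma~\ref{lem:layer}(B) for an arbitrary function vanishing on the outer boundary. In normal coordinates, for a.e.\ $\xi\in\Gamma$ I would write
\[
|\Phi_\varepsilon(\xi,0)|\le\int_0^{\varepsilon\rho(\xi)}|\partial_t\Phi_\varepsilon|\,dt ,
\]
and apply H\"older's inequality to get
\[
|\Phi_\varepsilon(\xi)|^p\le(\varepsilon\rho)^{p-1}\int_0^{\varepsilon\rho}|\nabla\Phi_\varepsilon|^p\,dt .
\]
Dividing by $\rho^{p-1}$, integrating over $\Gamma$ and using $J=1+O(\varepsilon)$ should yield
\[
\varepsilon^{p-1}\int_{\Sigma_\varepsilon}|\nabla\Phi_\varepsilon|^p\,dx\ge(1-C\varepsilon)\int_\Gamma\rho^{1-p}|\Phi_\varepsilon|^p\,d\sigma .
\]
The trace on $\Gamma$ is the same from both sides, so this can be done by a density argument with smooth functions.

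The same inequality, with $t$ ranging only up to an intermediate point, bounds $|\Phi_\varepsilon(\xi,t)|^p$ and therefore gives
\[
\int_{\Sigma_\varepsilon}|\Phi_\varepsilon|^p\,dx\le C\varepsilon^p\cdot\varepsilon^{1-p}\cdot O(1)=O(\varepsilon).
\]
Hence $\|u\|_{L^p(\Omega)}=1$, so $u\not\equiv0$.

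\textbf{Conclusion.} Combining weak lower semicontinuity of $\int_\Omega|\nabla\cdot|^p$ with strong trace convergence gives
\[
\liminf_{\varepsilon\to0}\Lambda_1(\varepsilon)\ge\int_\Omega|\nabla u|^p\,dx+\int_\Gamma\rho^{1-p}|u|^p\,d\sigma\ge\mu_1 .
\]
Thus $\Lambda_1(\varepsilon)\to\mu_1$, and $u$ is a nonnegative normalized minimizer. By the simplicity in Proposition~\ref{prop:principal_eigenvalue}, $u=u_1$. Since the limit is independent of the subsequence, the whole family satisfies $\Phi_\varepsilon|_\Omega\rightharpoonup u_1$.
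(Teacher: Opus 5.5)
Your proposal is correct and follows essentially the same route as the paper. The ingredients match: the affine extension of $u_1$ with Lemma~\ref{lem:layer}(B) for the upper bound, and the one-dimensional H\"older estimate along normal segments for both the $O(\varepsilon)$ layer mass and the layer-energy lower bound $\varepsilon^{p-1}\int_{\Sigma_\varepsilon}|\nabla\Phi_\varepsilon|^p\,dx\ge(1-C\varepsilon)\int_\Gamma\rho^{1-p}|\Phi_\varepsilon|^p\,d\sigma$, followed by weak lower semicontinuity, trace compactness and simplicity of $\mu_1$. The differences are minor: you skip the paper's Minty-type passage to the limit in the interior weak formulation, which is not needed once $u$ is identified as a normalized nonnegative minimizer, and you treat the Jacobian factor and the matching of traces across $\Gamma$ more explicitly than the paper does.
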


\begin{proof}
The eigenvalue $\Lambda_1(\varepsilon)$ admits the Rayleigh--Ritz characterization
\begin{equation}\label{eq:rayleigh_eps}
\Lambda_1(\varepsilon) = \inf_{\substack{\Phi \in W_0^{1,p}(\Omega_\varepsilon) \\ \Phi \neq 0}} \frac{\displaystyle\int_{\Omega_\varepsilon} \sigma_\varepsilon |\nabla\Phi|^p \, dx}{\displaystyle\int_{\Omega_\varepsilon} |\Phi|^p \, dx}.
\end{equation}

The corresponding eigenfunction $\Phi_\varepsilon$ is normalized by $\|\Phi_\varepsilon\|_{L^p(\Omega_\varepsilon)} = 1$ and satisfies
\begin{equation}\label{eq:weak_eps}
\int_{\Omega_\varepsilon} \sigma_\varepsilon |\nabla\Phi_\varepsilon|^{p-2} \nabla\Phi_\varepsilon \cdot \nabla\psi \, dx = \Lambda_1(\varepsilon) \int_{\Omega_\varepsilon} |\Phi_\varepsilon|^{p-2} \Phi_\varepsilon \psi \, dx, \, \,  \, \forall \psi \in W_0^{1,p}(\Omega_\varepsilon).
\end{equation}
For the limiting Robin problem \eqref{eq:robin_limit1}, the principal eigenvalue is given by
\begin{equation}\label{eq:rayleigh_mu}
\mu_1 = \inf_{\substack{u \in W^{1,p}(\Omega) \\ u \neq 0}} \frac{\displaystyle\int_\Omega |\nabla u|^p \, dx + \int_\Gamma h(\xi) |u|^p \, d\sigma}{\displaystyle\int_\Omega |u|^p \, dx}, \quad \text{where } h(\xi) = \frac{1}{\rho(\xi)^{p-1}}.
\end{equation}

Let $u_1$ be the corresponding positive eigenfunction normalized by $\|u_1\|_{L^p(\Omega)} = 1$. We show $\Lambda_1(\varepsilon) \leq \mu_1 + o(1)$ by constructing a test function. Define
\begin{equation}\label{eq:extension}
\tilde{u}_\varepsilon(x) = \begin{cases}
u_1(x), & x \in \Omega, \\[6pt]
u_1(\xi)\left(1 - \dfrac{t}{\varepsilon\rho(\xi)}\right), & x = \xi + t\nu_\Gamma(\xi) \in \Sigma_\varepsilon,
\end{cases}
\end{equation}
where $0 \leq t \leq \varepsilon\rho(\xi)$ denotes the distance from $\Gamma$ along the outer normal $\nu_\Gamma(\xi)$. We decompose the integral:
\[
\int_{\Omega_\varepsilon} |\tilde{u}_\varepsilon|^p \, dx = \int_\Omega |u_1|^p \, dx + \int_{\Sigma_\varepsilon} |\tilde{u}_\varepsilon|^p \, dx = 1 + \int_{\Sigma_\varepsilon} |\tilde{u}_\varepsilon|^p \, dx.
\]

In the thin layer, using boundary normal coordinates, we have $dx = (1+t\kappa(\xi)+O(t^2))\,dt\,d\sigma(\xi)$ for $t\in[0,\varepsilon\rho(\xi)]$. Hence,
\begin{align*}
\int_{\Sigma_\varepsilon} |\tilde{u}_\varepsilon|^p \, dx 
&= \int_\Gamma \int_0^{\varepsilon\rho(\xi)} 
\left|u_1(\xi)\left(1 - \frac{t}{\varepsilon\rho(\xi)}\right)\right|^p 
  {\bigl(1+t\kappa(\xi)+O(t^2)\bigr)} \, dt \, d\sigma(\xi) \\
&= \int_\Gamma |u_1(\xi)|^p \int_0^{\varepsilon\rho(\xi)} 
\left(1 - \frac{t}{\varepsilon\rho(\xi)}\right)^p \bigl(1+ {O(t)}  \bigr)\, dt \, d\sigma(\xi).
\end{align*}

With $s = t/(\varepsilon\rho(\xi))$, this becomes  
\begin{align*}
\int_{\Sigma_\varepsilon} |\tilde{u}_\varepsilon|^p \, dx 
& = {\varepsilon \int_\Gamma |u_1(\xi)|^p \rho(\xi)
\left[\int_0^1 (1-s)^p\,ds + O(\varepsilon)\right] d\sigma(\xi)}\\
&= {\varepsilon \int_\Gamma |u_1(\xi)|^p \rho(\xi)\left[\frac{1}{p+1}+O(\varepsilon)\right]d\sigma(\xi)}
=O(\varepsilon).
\end{align*}

Since $\int_0^1 (1-s)^p \, ds = 1/(p+1)$ and $\int_0^1 s(1-s)^p \, ds = 1/((p+1)(p+2))$, and since $\rho \in C^2(\Gamma)$ and $\kappa \in C^0(\Gamma)$ are bounded on the compact set $\Gamma$, we obtain $\int_{\Sigma_\varepsilon} |\tilde{u}_\varepsilon|^p \, dx = \varepsilon \cdot C_1 + \varepsilon^2 \cdot C_2 = O(\varepsilon)$, where $C_1$ and $C_2$ are constants depending only on $u_1$, $\rho$, $\kappa$, and $p$. Therefore,
\begin{equation}\label{Sun2}
\int_{\Omega_\varepsilon} |\tilde{u}_\varepsilon|^p \, dx = 1 + O(\varepsilon). 
 \end{equation}
 
The energy in $\Omega$ satisfies $\int_\Omega \sigma_\varepsilon |\nabla\tilde{u}_\varepsilon|^p \, dx = \int_\Omega |\nabla u_1|^p \, dx$, since $\sigma_\varepsilon = 1$ in $\Omega$ and $\tilde{u}_\varepsilon = u_1$ in $\Omega$. By Lemma~\ref{lem:layer}(B), the energy in the coating layer $\Sigma_{\varepsilon}$ satisfies
\[
\int_{\Sigma_\varepsilon} \varepsilon^{p-1} |\nabla\tilde{u}_\varepsilon|^p \, dx = \int_\Gamma \frac{|u_1(\xi)|^p}{\rho(\xi)^{p-1}} \, d\sigma(\xi) + o(1).
\]

The total energy of the test function is
\begin{align*}
\int_{\Omega_\varepsilon} \sigma_\varepsilon |\nabla\tilde{u}_\varepsilon|^p \, dx 
&= \int_\Omega |\nabla u_1|^p \, dx + \int_{\Sigma_\varepsilon} \varepsilon^{p-1}|\nabla\tilde{u}_\varepsilon|^p \, dx \\
&= \int_\Omega |\nabla u_1|^p \, dx + \int_\Gamma \frac{|u_1(\xi)|^p}{\rho(\xi)^{p-1}} \, d\sigma(\xi) + o(1) \\
&= \int_\Omega |\nabla u_1|^p \, dx + \int_\Gamma h(\xi)|u_1|^p \, d\sigma + o(1),
\end{align*}
where we used the definition $h(\xi) = 1/\rho(\xi)^{p-1}$.  By the Rayleigh quotient characterization \eqref{eq:rayleigh_eps} and relation \eqref{Sun2},
\begin{equation}
\Lambda_1(\varepsilon) \leq \frac{\displaystyle\int_{\Omega_\varepsilon} \sigma_\varepsilon |\nabla\tilde{u}_\varepsilon|^p \, dx}{\displaystyle\int_{\Omega_\varepsilon} |\tilde{u}_\varepsilon|^p \, dx} = \frac{\displaystyle\int_\Omega |\nabla u_1|^p \, dx + \int_\Gamma h(\xi)|u_1|^p \, d\sigma + o(1)}{1 + O(\varepsilon)}.
\label{eq:upper_rayleigh}
\end{equation}

Since $u_1$ is the principal eigenfunction of the Robin problem \eqref{eq:robin_limit1} and $\|u_1\|_{L^p(\Omega)} = 1$, we have
  \begin{equation}
 {\int_\Omega |\nabla u_1|^p \, dx + \int_\Gamma h(\xi) |u_1|^p \, d\sigma = \mu_1.}
\label{eq:key_identity}
\end{equation}

Substituting \eqref{eq:key_identity} into \eqref{eq:upper_rayleigh}: $\Lambda_1(\varepsilon) \leq (\mu_1 + o(1))/(1 + O(\varepsilon))$. Since $(1+O(\varepsilon))^{-1} = 1 - O(\varepsilon) + O(\varepsilon^2) = 1 + O(\varepsilon)$ for small $\varepsilon$, we obtain
\begin{equation}
\Lambda_1(\varepsilon) \leq (\mu_1 + o(1))(1 + O(\varepsilon)) = \mu_1 + o(1) \quad \text{as } \varepsilon \to 0.
\label{eq:upper_bound}
\end{equation}

To obtain the reverse inequality $\Lambda_1(\varepsilon) \geq \mu_1 + o(1)$, we use a compactness argument. From \eqref{eq:rayleigh_eps} and the normalization $\|\Phi_\varepsilon\|_{L^p(\Omega_\varepsilon)} = 1$, we have
\begin{equation}
\int_{\Omega_\varepsilon} \sigma_\varepsilon |\nabla\Phi_\varepsilon|^p \, dx = \Lambda_1(\varepsilon).
\label{eq:energy_Phi}
\end{equation} 
From the upper bound \eqref{eq:upper_bound}, we have $\Lambda_1(\varepsilon) \leq \mu_1 + 1$ for all sufficiently small $\varepsilon > 0$. This indicates that the sequence $\{\Lambda_1(\varepsilon)\}$ is bounded: there exists $C > 0$ independent of $\varepsilon$ such that $\Lambda_1(\varepsilon) \leq C$ for all $\varepsilon \in (0, \varepsilon_0]$.

The restrictions $\Phi_\varepsilon|_\Omega$ are bounded in $W^{1,p}(\Omega)$. From \eqref{eq:energy_Phi}, we have
\[
\int_\Omega |\nabla\Phi_\varepsilon|^p \, dx + \int_{\Sigma_\varepsilon} \varepsilon^{p-1}|\nabla\Phi_\varepsilon|^p \, dx = \Lambda_1(\varepsilon) \leq C,
\]
where we used $\sigma_\varepsilon = 1$ in $\Omega$ and $\sigma_\varepsilon = \varepsilon^{p-1}$ in $\Sigma_\varepsilon$. Since each term is nonnegative:
\begin{equation}
\int_\Omega |\nabla\Phi_\varepsilon|^p \, dx \leq C.
\label{eq:gradient_bound}
\end{equation}

For the $L^p$-norm, we have
\begin{equation}
\int_\Omega |\Phi_\varepsilon|^p \, dx \leq \int_{\Omega_\varepsilon} |\Phi_\varepsilon|^p \, dx = 1.
\label{eq:Lp_bound}
\end{equation}

Combining \eqref{eq:gradient_bound} and \eqref{eq:Lp_bound}, we obtain the uniform bound
\[
\|\Phi_\varepsilon|_\Omega\|_{W^{1,p}(\Omega)}^p
=\int_\Omega|\nabla\Phi_\varepsilon|^p\,dx+\int_\Omega|\Phi_\varepsilon|^p\,dx
\le C+1,
\]
hence $\|\Phi_\varepsilon|_\Omega\|_{W^{1,p}(\Omega)}\le C'$ for some constant $C'$ independent of $\varepsilon$.

We also show that the $L^p$-mass of $\Phi_\varepsilon$ in the coating is negligible:
\begin{equation}\label{eq:Sigma_Lp_small}
\int_{\Sigma_\varepsilon}|\Phi_\varepsilon|^p\,dx = O(\varepsilon)\qquad(\varepsilon\to0).
\end{equation}
Indeed, fix $\xi\in\Gamma$ and set $g_{\varepsilon,\xi}(t):=\Phi_\varepsilon(\xi+t\nu_\Gamma(\xi))$ for $t\in(0,\varepsilon\rho(\xi))$.
Since $\Phi_\varepsilon=0$ on the outer boundary $\partial\Omega_\varepsilon$, we have $g_{\varepsilon,\xi}(\varepsilon\rho(\xi))=0$. Thus for $t\in(0,\varepsilon\rho(\xi))$,
\[
|g_{\varepsilon,\xi}(t)|
=\Bigl|\int_t^{\varepsilon\rho(\xi)} g'_{\varepsilon,\xi}(s)\,ds\Bigr|
\le (\varepsilon\rho(\xi))^{1/p'}\Bigl(\int_0^{\varepsilon\rho(\xi)}|g'_{\varepsilon,\xi}(s)|^p\,ds\Bigr)^{1/p},
\]
and integrating in $t$ yields
\[
\int_0^{\varepsilon\rho(\xi)}|g_{\varepsilon,\xi}(t)|^p\,dt
\le C\,\varepsilon^p \int_0^{\varepsilon\rho(\xi)}|g'_{\varepsilon,\xi}(t)|^p\,dt
\le C\,\varepsilon^p \int_0^{\varepsilon\rho(\xi)}|\nabla\Phi_\varepsilon(\xi+t\nu_\Gamma(\xi))|^p\,dt,
\]
where $C$ depends only on $\|\rho\|_{L^\infty(\Gamma)}$ and $p$.
Using the Jacobian formula \eqref{eq:jacobian} and integrating over $\xi\in\Gamma$, we obtain
\[
\int_{\Sigma_\varepsilon}|\Phi_\varepsilon|^p\,dx
\le C\,\varepsilon^p\int_{\Sigma_\varepsilon}|\nabla\Phi_\varepsilon|^p\,dx.
\]
Finally, since $\varepsilon^{p-1}\int_{\Sigma_\varepsilon}|\nabla\Phi_\varepsilon|^p\,dx
\le \int_{\Sigma_\varepsilon}\varepsilon^{p-1}|\nabla\Phi_\varepsilon|^p\,dx
\le \Lambda_1(\varepsilon)\le C$ by \eqref{eq:energy_Phi} and the uniform upper bound, we get
$\int_{\Sigma_\varepsilon}|\nabla\Phi_\varepsilon|^p\,dx \le C\,\varepsilon^{1-p}$,
hence \eqref{eq:Sigma_Lp_small} follows. Consequently,
\[
\int_\Omega|\Phi_\varepsilon|^p\,dx
=1-\int_{\Sigma_\varepsilon}|\Phi_\varepsilon|^p\,dx
=1+O(\varepsilon),
\]
so in particular $\|\Phi_\varepsilon|_\Omega\|_{L^p(\Omega)}\to 1$.

By the Rellich--Kondrachov theorem, there exists a subsequence (still denoted $\{\Phi_\varepsilon\}$) and a function $\hat{u} \in W^{1,p}(\Omega)$ such that, as $\varepsilon \to 0$:
\begin{align}
\Phi_\varepsilon|_\Omega &\rightharpoonup \hat{u} \quad \text{weakly in } W^{1,p}(\Omega), \label{eq:weak_conv} \\
\Phi_\varepsilon|_\Omega &\to \hat{u} \quad \text{strongly in } L^p(\Omega), \label{eq:strong_Lp} \\
\Phi_\varepsilon|_\Gamma &\to \hat{u}|_\Gamma \quad \text{strongly in } L^p(\Gamma). \label{eq:trace_conv}
\end{align}

From \eqref{eq:strong_Lp} and \eqref{eq:Sigma_Lp_small} (which implies $\int_\Omega|\Phi_\varepsilon|^p\,dx = 1+O(\varepsilon)$), we obtain

\begin{equation}
\|\hat{u}\|_{L^p(\Omega)} = \lim_{\varepsilon \to 0} \|\Phi_\varepsilon|_\Omega\|_{L^p(\Omega)} =1.
\label{eq:uhat_norm}
\end{equation}

Set $\hat{\lambda} := \liminf_{\varepsilon \to 0} \Lambda_1(\varepsilon)$ (possibly along a subsequence) so $\hat{\lambda}$ is finite. We establish that the limit function $\hat{u}$ satisfies the weak formulation of the Robin problem \eqref{eq:robin_limit1} with eigenvalue $\hat{\lambda}$.

Let $\phi \in C_c^\infty(\Omega)$ be an arbitrary test function with compact support in $\Omega$. For sufficiently small $\varepsilon$, the support of $\phi$ is contained in $\Omega \subset \Omega_\varepsilon$, and we can extend $\phi$ by zero to $\Omega_\varepsilon$. From the weak formulation \eqref{eq:weak_eps} for $\Phi_\varepsilon$, since $\text{supp}(\phi) \subset \Omega$ and $\sigma_\varepsilon = 1$ in $\Omega$:
\begin{equation}
\int_\Omega |\nabla\Phi_\varepsilon|^{p-2} \nabla\Phi_\varepsilon \cdot \nabla\phi \, dx = \Lambda_1(\varepsilon) \int_\Omega |\Phi_\varepsilon|^{p-2} \Phi_\varepsilon \phi \, dx.
\label{eq:weak_interior}
\end{equation}

We pass to the limit as $\varepsilon \to 0$ in \eqref{eq:weak_interior}. Since $\Phi_\varepsilon \to \hat{u}$ strongly in $L^p(\Omega)$ by \eqref{eq:strong_Lp}, and the map $s \mapsto |s|^{p-2}s$ is continuous, we have for any $\phi \in C_c^\infty(\Omega)$:
\begin{equation}
\lim_{\varepsilon \to 0} \Lambda_1(\varepsilon) \int_\Omega |\Phi_\varepsilon|^{p-2} \Phi_\varepsilon \phi \, dx = \hat{\lambda} \int_\Omega |\hat{u}|^{p-2} \hat{u} \phi \, dx.
\label{eq:RHS_limit}
\end{equation}

The map $F(\xi)=|\xi|^{p-2}\xi$ is monotone and maps $L^p$ into $L^{p'}$. Hence, up to a subsequence,
$F(\nabla\Phi_\varepsilon)\rightharpoonup \eta$ weakly in $L^{p'}(\Omega;\mathbb{R}^n)$.
By monotonicity,
\[
\int_\Omega \bigl(F(\nabla\Phi_\varepsilon)-F(w)\bigr)\cdot\bigl(\nabla\Phi_\varepsilon-w\bigr)\,dx \ge 0
\qquad \forall\, w\in L^p(\Omega;\mathbb{R}^n).
\]
Passing to the limit using $\nabla\Phi_\varepsilon\rightharpoonup\nabla\hat u$ in $L^p$ and $F(\nabla\Phi_\varepsilon)\rightharpoonup\eta$ in $L^{p'}$, we obtain
\[
\int_\Omega \bigl(\eta-F(w)\bigr)\cdot\bigl(\nabla\hat u-w\bigr)\,dx \ge 0
\qquad \forall\, w\in L^p(\Omega;\mathbb{R}^n).
\]
This is the Minty characterization of the graph of $F$, and it implies $\eta=F(\nabla\hat u)=|\nabla\hat u|^{p-2}\nabla\hat u$ a.e. in $\Omega$.
Therefore, for every $\phi\in C_c^\infty(\Omega)$,
\begin{equation}
\lim_{\varepsilon \to 0} \int_\Omega |\nabla\Phi_\varepsilon|^{p-2} \nabla\Phi_\varepsilon \cdot \nabla\phi \, dx
= \int_\Omega |\nabla\hat{u}|^{p-2} \nabla\hat{u} \cdot \nabla\phi \, dx.
\label{eq:LHS_limit}
\end{equation}

Combining \eqref{eq:RHS_limit} and \eqref{eq:LHS_limit} in \eqref{eq:weak_interior}:
\begin{equation}
\int_\Omega |\nabla\hat{u}|^{p-2} \nabla\hat{u} \cdot \nabla\phi \, dx = \hat{\lambda} \int_\Omega |\hat{u}|^{p-2} \hat{u} \phi \, dx
\label{eq:weak_interior_limit}
\end{equation}
for all $\phi \in C_c^\infty(\Omega)$. By density, \eqref{eq:weak_interior_limit} holds for all $\phi \in W_0^{1,p}(\Omega)$.

Next, we show $\hat{u}$ satisfies the Robin boundary condition through an energy comparison. By \eqref{eq:energy_Phi} we have:
\begin{align}\label{z}
\Lambda_1(\varepsilon) &= \int_\Omega |\nabla\Phi_\varepsilon|^p \, dx + \int_{\Sigma_\varepsilon} \varepsilon^{p-1}|\nabla\Phi_\varepsilon|^p \, dx.
\end{align}

We now control the coating contribution using the one-dimensional variational principle. Define the trace $a_\varepsilon(\xi):=\Phi_\varepsilon(\xi)$ for $\xi\in\Gamma$, so that $a_\varepsilon\to \hat{u}|_\Gamma$ strongly in $L^p(\Gamma)$ by~\eqref{eq:trace_conv}. For each $\xi\in\Gamma$, consider the one-dimensional profile along the normal direction: the eigenfunction restricted to the normal segment from $\xi$ to the outer boundary satisfies $\Phi_\varepsilon(\xi) = a_\varepsilon(\xi)$ and $\Phi_\varepsilon(\xi + \varepsilon\rho(\xi)\nu_\Gamma) = 0$. 
More precisely, for a.e.\ $\xi\in\Gamma$ define $g_\xi(t):=\Phi_\varepsilon(\xi+t\nu_\Gamma(\xi))$ on $[0,\varepsilon\rho(\xi)]$. Then $g_\xi(0)=a_\varepsilon(\xi)$ and $g_\xi(\varepsilon\rho(\xi))=0$, hence by the fundamental theorem of calculus and H\"older,
\[
|a_\varepsilon(\xi)|^p = |g_\xi(0)-g_\xi(\varepsilon\rho(\xi))|^p
\le (\varepsilon\rho(\xi))^{p-1}\int_0^{\varepsilon\rho(\xi)} |g_\xi'(t)|^p\,dt.
\]
Since $|\nabla\Phi_\varepsilon|^p\ge |\partial_t\Phi_\varepsilon|^p=|g_\xi'(t)|^p$ and the normal-coordinate Jacobian equals $1+O(\varepsilon)$, integrating over $\Gamma$ yields the stated slice-wise lower bound (up to $o(1)$).

By the one-dimensional variational principle for the affine profile (cf.\ Lemma~\ref{lem:layer}(B)), the minimal energy along each normal slice is achieved by the affine extension, giving the lower bound
\[
\int_{\Sigma_\varepsilon}\varepsilon^{p-1}|\nabla\Phi_\varepsilon|^p\,dx
\ge \int_\Gamma \frac{|a_\varepsilon(\xi)|^p}{\rho(\xi)^{p-1}}\,d\sigma + o(1)
= \int_\Gamma h(\xi)|a_\varepsilon(\xi)|^p\,d\sigma + o(1),
\]
where $h(\xi)=1/\rho(\xi)^{p-1}$ and the $o(1)$ term arises from corrections due to the boundary normal coordinate Jacobian.

By the weak lower semicontinuity of the Dirichlet energy:
\begin{equation}
\int_\Omega |\nabla\hat{u}|^p \, dx \leq \liminf_{\varepsilon \to 0} \int_\Omega |\nabla\Phi_\varepsilon|^p \, dx.
\label{eq:lsc}
\end{equation}
From \eqref{eq:energy_Phi}, \eqref{z}, and \eqref{eq:lsc}:

\begin{align}\label{eq:lower_energy}
\hat{\lambda} = \liminf_{\varepsilon \to 0} \Lambda_1(\varepsilon) \notag & = \liminf_{\varepsilon \to 0} \left[\int_\Omega |\nabla\Phi_\varepsilon|^p \, dx + \int_{\Sigma_\varepsilon} \varepsilon^{p-1}|\nabla\Phi_\varepsilon|^p \, dx\right]\\
&\geq \int_\Omega |\nabla\hat{u}|^p \, dx + \int_\Gamma h(\xi)|\hat{u}|^p \, d\sigma.
\end{align}

By the variational characterization \eqref{eq:rayleigh_mu} and using $\|\hat{u}\|_{L^p(\Omega)} = 1$ from \eqref{eq:uhat_norm}:
\begin{equation}
\mu_1 \leq \frac{\displaystyle\int_\Omega |\nabla\hat{u}|^p \, dx + \int_\Gamma h(\xi)|\hat{u}|^p \, d\sigma}{\displaystyle\int_\Omega |\hat{u}|^p \, dx} \leq \int_\Omega |\nabla\hat{u}|^p \, dx + \int_\Gamma h(\xi)|\hat{u}|^p \, d\sigma.
\label{eq:mu_comparison}
\end{equation}

From \eqref{eq:lower_energy} and \eqref{eq:mu_comparison}: $\mu_1 \leq \hat{\lambda} = \liminf_{\varepsilon \to 0} \Lambda_1(\varepsilon)$. Combined with the upper bound \eqref{eq:upper_bound}:
\[
\mu_1 \leq \liminf_{\varepsilon \to 0} \Lambda_1(\varepsilon) \leq \limsup_{\varepsilon \to 0} \Lambda_1(\varepsilon) \leq \mu_1.
\]

Therefore $\lim_{\varepsilon \to 0} \Lambda_1(\varepsilon) = \mu_1$. Moreover, all inequalities in the chain must be equalities. In particular, equality in \eqref{eq:mu_comparison} shows that $\hat{u}$ attains the infimum in \eqref{eq:rayleigh_mu}. Hence $\hat{u}$ is a principal eigenfunction for the Robin problem \eqref{eq:robin_limit1} with eigenvalue $\mu_1$.

By the simplicity of the principal eigenvalue (Proposition~\ref{prop:principal_eigenvalue}), $\hat{u} = u_1$ (up to sign, which we fix by positivity). Since the limit is unique, the entire sequence converges (not just a subsequence): $\Phi_\varepsilon|_\Omega \rightharpoonup u_1$ weakly in $W^{1,p}(\Omega)$. This completes the proof of Theorem \ref{thm:asymptotic}.
\end{proof}
 


\subsection{Limiting Cases;  $p \to 1^+$ and $p \to \infty$}\label{sec:limiting}

As $p$ deviates from 2, the asymptotic behavior of the eigenvalue problem becomes complex. Two key limiting regimes are particularly interesting: the degeneracy at $p = 1$ (total variation) and the singular limit $p \to \infty$ (infinity Laplacian).

The limit $p \to 1^+$ requires a different framework, 
as the $p$-Laplacian degenerates into the 1-Laplacian (total variation operator), 
which is no longer a differential operator in the classical sense. We provide 
here a \emph{formal} discussion of this limit; a fully rigorous treatment in the mixed/Robin setting requires BV trace theory and a boundary $\Gamma$--convergence
statement for the Robin term, which we do not develop here.

\begin{definition}\label{def:BV}
Let $\Omega \subset \mathbb{R}^n$ be open. The space of functions of bounded 
variation is
\[
BV(\Omega) := \left\{ u \in L^1(\Omega) : |Du|(\Omega) < \infty \right\},
\]
where the total variation is defined as
\[
|Du|(\Omega) := \sup\left\{ \int_\Omega u \operatorname{div}\phi \, dx : 
\phi \in C_c^1(\Omega; \mathbb{R}^n), \, |\phi| \leq 1 \right\}.
\]
For $u \in W^{1,1}(\Omega)$, we have $|Du|(\Omega) = \int_\Omega |\nabla u| \, dx$.
\end{definition}

The space of bounded variation functions provides the natural setting for problems 
involving total variation (1-Laplacian), where classical derivatives may not exist. 
This framework is essential for understanding the limit $p \to 1^+$.

\begin{definition}\label{def:1-laplacian-robin}
The eigenvalue problem for the $1$-Laplacian with Robin boundary conditions is formally written as
\[
\begin{cases}
\displaystyle -\operatorname{div}\left(\frac{\nabla u}{|\nabla u|}\right) = \lambda \frac{u}{|u|} & \text{in } \Omega,\\[2ex]
u = 0 & \text{on } \Gamma_D,\\[1ex]
\displaystyle \frac{\partial_\nu u}{|\nabla u|} + \operatorname{sign}(u) = 0 & \text{on } \gamma,
\end{cases}
\]
or equivalently in calibration form: $z \cdot \nu + \operatorname{sign}(u) = 0$ on $\gamma$, where $z \in L^\infty(\Omega; \mathbb{R}^n)$ is a calibration vector field satisfying $|z| \leq 1$ a.e.\ and $(z, Du) = |Du|$ as measures.
\end{definition}

The boundary condition $\frac{\partial_\nu u}{|\nabla u|} + \operatorname{sign}(u) = 0$ arises as the limit of the $p$-Laplacian Robin condition
\[
|\nabla u|^{p-2}\partial_\nu u + h_p |u|^{p-2}u = 0
\]
as $p \to 1^+$, where $h_p = \rho^{-(p-1)} \to 1$. Indeed, dividing by $|\nabla u|^{p-2}$ and $|u|^{p-2}$ respectively and taking $p \to 1^+$ yields the stated condition.

Intuitively, as $p \to 1^+$, eigenfunctions concentrate on lower-dimensional sets 
(curves or points) where they achieve maximum efficiency. The eigenvalue problem 
becomes a geometric optimization problem: finding the set that minimizes the ratio 
of perimeter to area (the Cheeger problem).

\begin{definition}\label{def:1-laplacian-weak}
A function $u \in BV(\Omega) $ with $u = 0$ on $\Gamma_D$ 
(in the trace sense) is an eigenfunction for the 1-Laplacian with eigenvalue 
$\lambda$ if there exists a vector field $\mathbf{z} \in L^\infty(\Omega; \mathbb{R}^n)$ 
with $|\mathbf{z}| \leq 1$ a.e. such that:
\begin{enumerate}[label=(\roman*)]
\item $(\mathbf{z}, Du) = |Du|$ as measures (i.e., $\mathbf{z}$ is a calibration for $u$);
\item $-\operatorname{div}(\mathbf{z}) = \lambda \operatorname{sign}(u)$ in the sense of distributions;
\item The boundary condition holds: $[\mathbf{z} \cdot \nu] + h \operatorname{sign}(u) = 0$ on $\gamma$.
\end{enumerate}
\end{definition}

\begin{theorem}\label{thm:eigenvalue-p-to-1}
Let $\lambda_1(p)$ denote the principal eigenvalue of problem~\eqref{eq:p-laplace-robin} 
with Robin coefficient $h_p := 1/\rho^{p-1}$. Then $h_p\to 1$ uniformly on $\gamma$ as $p\to 1^+$.
Moreover, \emph{assuming} the standard equi-coercivity and $\Gamma$--convergence (in $L^1(\Omega)$) of the associated energies with the Robin boundary term, the eigenvalues converge to the BV-ground-state level
\[
\lambda_1^{(1)} \;:=\;
\inf_{\substack{u\in BV(\Omega),\,u|_{\Gamma_D}=0\\ u\not\equiv 0}}
\frac{|Du|(\Omega)+\int_\gamma |u|\,d\sigma}{\int_\Omega |u|\,dx},
\]
which is the natural ``first eigenvalue'' for the 1-Laplacian with Robin contribution on $\gamma$.
\end{theorem}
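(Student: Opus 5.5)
The statement has two parts of different character, so I will treat them separately.

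\textbf{Uniform convergence of $h_p$.} Since $\rho\in C^2(\Gamma)$ is positive on the compact set $\Gamma$, there are constants with $0<m\le\rho\le M$. Then
\[
|h_p-1|=|e^{-(p-1)\log\rho}-1|\le e^{(p-1)L}-1,\qquad L:=\max(|\log m|,|\log M|).
\]
The right-hand side tends to $0$ as $p\to1^+$, uniformly in $\xi$. This is elementary.

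\textbf{Convergence of the eigenvalues.} I would use the standard $\Gamma$-convergence scheme for Rayleigh quotients, carried out in $L^1(\Omega)$. Define
\[
E_p(u)=\Big(\int_\Omega|\nabla u|^p+\int_\gamma h_p|u|^p\Big)^{1/p}
\]
on the constraint set $\{\|u\|_{L^p}=1\}$. Then $\lambda_1(p)^{1/p}=\min E_p$ by Proposition~\ref{prop:principal_eigenvalue}, and it is convenient to compare $\lambda_1(p)^{1/p}$ with $\lambda_1^{(1)}$, since $\lambda_1(p)$ and $\lambda_1(p)^{1/p}$ have the same limit once boundedness is known. The proof then has two halves.

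\emph{Upper bound.} Take a near-minimizer of $\lambda_1^{(1)}$. By Anzellotti/strict-density approximation with traces (this is part of the assumed $\Gamma$-$\limsup$), it may be taken in $C^1(\overline\Omega)$, vanishing near $\Gamma_D$, with $\|u\|_{L^1}=1$. Insert $u/\|u\|_{L^p}$ into the Rayleigh quotient~\eqref{eq:rayleigh_quotient}. Dominated convergence, $h_p\to1$ uniformly, and $\|u\|_{L^p}\to\|u\|_{L^1}$ give
\[
\limsup_{p\to1^+}\lambda_1(p)\le\frac{\int|\nabla u|+\int_\gamma|u|}{\int|u|}\le\lambda_1^{(1)}+\delta.
\]
Letting $\delta\to0$ gives $\limsup_{p\to1^+}\lambda_1(p)\le\lambda_1^{(1)}$.

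\emph{Lower bound.} Let $u_p$ be the normalized eigenfunctions. Hölder gives $\int|\nabla u_p|\le|\Omega|^{1/p'}(\int|\nabla u_p|^p)^{1/p}$, and similarly for the $\gamma$-term using $h_p\to1$; so $\{u_p\}$ is bounded in $W^{1,1}$. This is the assumed equi-coercivity, and via BV compactness it yields, along a subsequence, $u_p\to u$ in $L^1$ with $\|u\|_{L^1}=\lim\|u_p\|_{L^1}=1$. The normalization mass must not be lost; equi-integrability is also part of the coercivity hypothesis. The assumed $\Gamma$-$\liminf$ inequality, including the boundary term, then gives
\[
|Du|(\Omega)+\int_\gamma|u|\le\liminf_{p\to1^+}\lambda_1(p)^{1/p}|\Omega|^{1/p'}.
\]
The factor $|\Omega|^{1/p'}\to1$.

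\textbf{Main obstacle.} The delicate points are the boundary term and the Dirichlet part. In BV, the trace is not continuous under $L^1$ convergence, and the constraint $u=0$ on $\Gamma_D$ is not closed. I would handle both by extending $u$ by zero across $\Gamma_D$ and charging the jump there, which is the usual relaxation. The Robin term is lower semicontinuous because $h_p\to1\le$ the relaxation weight. Since the statement explicitly assumes these $\Gamma$-convergence properties, I would invoke them rather than prove them. Combining the upper and lower bounds gives $\lambda_1(p)\to\lambda_1^{(1)}$.
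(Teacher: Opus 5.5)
Your proposal is correct and follows the same route as the paper. The uniform convergence $h_p\to1$ comes from $0<\min\rho\le\rho\le\max\rho$, and $\lambda_1(p)\to\lambda_1^{(1)}$ is obtained as the standard consequence of the assumed equi-coercivity and $\Gamma$-convergence together with stability of minimizers; you simply unpack the latter into the recovery-sequence upper bound and the compactness/$\liminf$ lower bound, which the paper leaves implicit. (One small point: you need not fold mass preservation into the hypothesis, since boundedness in $W^{1,1}\hookrightarrow L^{n/(n-1)}$ and interpolation with $\|u_p\|_{L^p}=1$ already force $\|u_p\|_{L^1}\to1$, hence $\|u\|_{L^1}=1$.)
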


\begin{proof}
The uniform convergence $h_p\to 1$ follows from $\rho>0$ and continuity on the compact set $\gamma$.
The convergence $\lambda_1(p)\to \lambda_1^{(1)}$ is a standard consequence of
$\Gamma$--convergence (or Mosco convergence) of the energies
\[
u\mapsto \int_\Omega |\nabla u|^p\,dx+\int_\gamma h_p|u|^p\,d\sigma
\]
to the BV-energy $u\mapsto |Du|(\Omega)+\int_\gamma |u|\,d\sigma$, together with stability of minimizers.
Since the full justification in the mixed/Robin setting requires a precise boundary $\Gamma$--convergence statement,
we record this as a formal/conditional limit (cf.\ \cite{kawohl2003} for related Dirichlet results).
\end{proof}

\begin{remark}\label{rem:cheeger_motivation}
For the pure Dirichlet case ($\gamma=\emptyset$, $\Gamma_D=\partial\Omega$), the BV ground-state level
$\lambda_1^{(1)}$ from Theorem~\ref{thm:eigenvalue-p-to-1} reduces to the classical Cheeger constant
\[
h(\Omega):=\inf_{E\subset\Omega}\frac{P(E,\Omega)}{|E|},
\]
and minimizers are given (up to normalization) by characteristic functions of Cheeger sets; see, e.g., \cite{kawohl2003}.
\smallskip

With a Robin contribution on $\gamma$, the functional in Theorem~\ref{thm:eigenvalue-p-to-1} suggests a
\emph{Robin--Cheeger} type quantity of the form
\[
h_{\mathrm R}(\Omega;\gamma)\ :=\ \inf_{\substack{E\subset\Omega\\ E\ \text{finite perimeter}}}
\frac{P(E,\Omega)+\mathcal H^{n-1}\!\big(\partial^\ast E\cap \gamma\big)}{|E|},
\]
where $\partial^\ast E$ denotes the reduced boundary. Establishing an identity
$\lambda_1^{(1)}=h_{\mathrm R}(\Omega;\gamma)$ requires a careful BV-trace interpretation of the boundary term
$\int_\gamma |u|\,d\sigma$ (and depends on the precise mixed/Robin setting). Since this identification is not used below,
we record it only as heuristic motivation.
\end{remark}

\begin{proposition} \label{prop:robin-coeff-limit-1}
As $p \to 1^+$, the effective Robin coefficient satisfies
\[
h_p(\xi) = \frac{1}{\rho(\xi)^{p-1}} \to 1
\]
uniformly on $\gamma$, independent of the coating thickness function $\rho(\xi) > 0$.
\end{proposition}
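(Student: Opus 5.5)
The proof is a direct estimate of $h_p(\xi)=\rho(\xi)^{-(p-1)}=\exp\bigl(-(p-1)\ln\rho(\xi)\bigr)$, uniform over $\xi$. First we record the bounds on $\rho$ that we will use. Since $\rho\in C^2(\Gamma)$ is positive and $\Gamma$ (hence $\overline{\gamma}\subset\Gamma$) is compact, there are constants $0<m\le M<\infty$ with $m\le\rho(\xi)\le M$ for all $\xi\in\overline{\gamma}$. Consequently $|\ln\rho(\xi)|\le L:=\max\{|\ln m|,|\ln M|\}$ uniformly on $\gamma$.

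Next we bound the difference. The elementary inequality $|e^{s}-1|\le |s|e^{|s|}$ applied with $s=-(p-1)\ln\rho(\xi)$ gives
\[
\sup_{\xi\in\gamma}\bigl|h_p(\xi)-1\bigr|\le (p-1)L\,e^{(p-1)L}.
\]
For $1<p<2$ the factor $e^{(p-1)L}$ is at most $e^{L}$. Hence the right-hand side is $O(p-1)$ and tends to $0$ as $p\to1^+$, which is uniform convergence $h_p\to1$ on $\gamma$. The bound depends on $\rho$ only through $L$, so the limit $1$ is independent of the thickness profile. The rate, however, depends on $\rho$ through $L$, and we will remark on this.

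The only point needing care is the uniform positive lower bound $m$ on $\rho$. Without it $\ln\rho$ could be unbounded and the convergence would only be pointwise. Continuity of $\rho$ on the compact set $\overline{\gamma}$, together with $\rho>0$, gives this bound. Otherwise the argument is the same one-line reasoning already used in the proof of Theorem~\ref{thm:eigenvalue-p-to-1}, now made quantitative.
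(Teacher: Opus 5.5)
Your proof is correct and follows the same route as the paper. The paper notes that $\rho(\xi)^{p-1}\to 1$ pointwise and that uniformity follows because $\rho$ is bounded away from $0$ and $\infty$ on a compact set; your estimate $\sup_{\gamma}|h_p-1|\le (p-1)L\,e^{(p-1)L}$ makes that uniformity quantitative. You also get the bounds $m\le\rho\le M$ from compactness of $\overline{\gamma}\subset\Gamma$ rather than of $\gamma$ itself, which is the more careful choice, since $\gamma$ is only relatively open.
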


\begin{proof}
For any $\rho(\xi) > 0$, we have $\lim_{p \to 1^+} \rho(\xi)^{p-1} = \rho(\xi)^0 = 1$. Since $\rho \in C^2(\gamma)$ is bounded away from zero and infinity on the 
compact set $\gamma$, the convergence is uniform.
\end{proof}

The following behavior is evident from the definition of $h$.

\begin{proposition}\label{prop:p-to-infty}
As $p \to \infty$, the asymptotics of the Robin coefficient 
\[
h(\xi) = \rho(\xi)^{-(p-1)}
\]
are entirely determined by the local thickness $\rho(\xi)$:
\begin{enumerate}[label=\textup{(\roman*)}]
\item If $\rho(\xi) > 1$, then $h(\xi) \to 0$, corresponding to a 
Neumann-type limit.
\item If $\rho(\xi) = 1$, then $h(\xi) \to 1$.
\item If $\rho(\xi) < 1$, then $h(\xi) \to \infty$, corresponding to a 
Dirichlet-type limit.
\end{enumerate}
\end{proposition}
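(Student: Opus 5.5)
The plan is to prove the statement pointwise in $\xi$, by writing the coefficient in exponential form and reading off the sign of the exponent. For fixed $\xi\in\gamma$ with $\rho(\xi)>0$ we have
\[
h(\xi)=\rho(\xi)^{-(p-1)}=\exp\bigl(-(p-1)\ln\rho(\xi)\bigr).
\]
As $p\to\infty$ the factor $p-1$ tends to $+\infty$, so the behaviour of the exponent is fixed by the sign of $\ln\rho(\xi)$. This gives three cases, which we treat in order.

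\textbf{Case (i).} If $\rho(\xi)>1$, then $\ln\rho(\xi)>0$, so the exponent tends to $-\infty$ and $h(\xi)\to0$.

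\textbf{Case (ii).} If $\rho(\xi)=1$, then $h(\xi)=1$ for every $p$, and the limit is trivially $1$.

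\textbf{Case (iii).} If $\rho(\xi)<1$, then $\ln\rho(\xi)<0$, so the exponent tends to $+\infty$ and $h(\xi)\to\infty$.

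We then justify the labels ``Neumann-type'' and ``Dirichlet-type'' at the level of the Robin law $|\nabla u|^{p-2}\partial_\nu u+h|u|^{p-2}u=0$, and of the boundary energy $\int_\gamma h|u|^p\,d\sigma$ in the Rayleigh quotient \eqref{eq:rayleigh_quotient}. Where $h\to0$, the boundary penalty disappears and the condition formally becomes $|\nabla u|^{p-2}\partial_\nu u=0$. Where $h\to\infty$, finiteness of the energy forces $u\to0$ on that portion of $\gamma$. We present this only as an interpretation, not as a convergence theorem for the eigenpairs, consistent with the formal treatment of the $p\to1^+$ limit earlier in this subsection.

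There is no real obstacle here. The one point worth flagging is uniformity. Since $\rho\in C^2$ on a compact set, on any compact subset where $\rho\ge1+\delta$ the convergence in (i) is uniform, with rate $(1+\delta)^{-(p-1)}$. Similarly, on any compact subset where $\rho\le1-\delta$ the divergence in (iii) is uniform. Near the level set $\{\rho=1\}$, however, the convergence cannot be uniform. We will state this remark but claim only pointwise convergence, as the proposition does.
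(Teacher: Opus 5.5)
Your argument is correct and is the same elementary reasoning the paper relies on. The paper gives no proof beyond calling the behavior evident from the definition of $h$, and your rewriting $h(\xi)=\exp\bigl(-(p-1)\ln\rho(\xi)\bigr)$ with a case split on the sign of $\ln\rho(\xi)$ makes that explicit. (One small precision for your side remark: non-uniformity occurs only near points of $\{\rho=1\}$ that are limits of points where $\rho\neq1$, since on an open set where $\rho\equiv1$ one has $h\equiv1$ uniformly; this does not affect the pointwise statement you prove.)
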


We now describe the limit as    $p\to\infty$ for case $h\equiv 1$,  in two parts: the interior equation and the boundary condition. For  $\infty$-eigenvalue scaling, the interior equation takes the form
\begin{equation}
\label{eq:h1-interior-infty}
\min\bigl\{|\nabla u|-\Lambda_\infty u,\;-\Delta_\infty u\bigr\}=0
\qquad\text{in }\Omega,
\end{equation}
where $\Delta_\infty u:=\nabla u^\top D^2 u\,\nabla u$ is the infinity-Laplacian.

The finite-$p$ Robin condition on $\gamma$ with $h\equiv 1$ reads
\begin{equation}
\label{eq:h1-robin-p}
|\nabla u_p|^{p-2}\partial_\nu u_p + u_p^{p-1} = 0
\qquad\text{on }\gamma,
\end{equation}
for a nonnegative principal eigenfunction. As $p\to\infty$   there are  three 
pointwise regimes:
\begin{enumerate}[(i)]
\item If $|\nabla u|>u$: dividing \eqref{eq:h1-robin-p} by $|\nabla u_p|^{p-2}$ 
and sending $p\to\infty$ annihilates the second term, yielding
\[
\partial_\nu u = 0.
\]

\item If $|\nabla u|=u$: the scaling produces
\[
\partial_\nu u + u = 0.
\]

\item If $|\nabla u|<u$: dividing by $u_p^{p-2}$ annihilates the first term, giving 
$u=0$, which is incompatible with a positive principal eigenfunction.
\end{enumerate}

Since only regimes (i) and (ii) are admissible, the formal limiting boundary condition 
is the constraint
\begin{equation}
\label{eq:h1-gradient-constraint}
|\nabla u| \ge u \qquad\text{on }\gamma,
\end{equation}
together with the splitting rule
\begin{equation}
\label{eq:h1-splitting}
\begin{cases}
\partial_\nu u = 0 & \text{on }\gamma\cap\{|\nabla u|>u\},\\[1mm]
\partial_\nu u + u = 0 & \text{on }\gamma\cap\{|\nabla u|=u\}.
\end{cases}
\end{equation}
For  $h\equiv 1$, the $L^p$ boundary energy satisfies
\[
\left(\int_\gamma |u_p|^p\,d\mathcal{H}^{N-1}\right)^{1/p}
\;\xrightarrow{p\to\infty}\;
\|u\|_{L^\infty(\gamma)},
\]
and the rigorous $L^\infty$ Rayleigh quotient is
\begin{equation}
\label{eq:h1-Rayleigh-infty}
\Lambda_\infty
=
\inf_{\substack{u\in W^{1,\infty}_{\Gamma_D}(\Omega)\\[1pt] 
\|u\|_{L^\infty(\Omega)}=1}}
\max\Bigl\{\|\nabla u\|_{L^\infty(\Omega)},\;\|u\|_{L^\infty(\gamma)}\Bigr\}.
\end{equation}
In particular, $\Lambda_\infty$ depends only on the geometry $(\Omega,\Gamma_D,\gamma)$ 
and not on the value $h\equiv 1$.

\medskip
Combining the interior equation and the boundary conditions, the limit is
\begin{equation}
\label{eq:h1-full-system}
\begin{cases}
\min\bigl\{|\nabla u|-\Lambda_\infty u,\;-\Delta_\infty u\bigr\}=0
& \text{in }\Omega,\\[2mm]
u=0 & \text{on }\Gamma_D,\\[2mm]
|\nabla u|\ge u & \text{on }\gamma,\\[2mm]
\partial_\nu u=0
& \text{on }\gamma\cap\{|\nabla u|>u\},\\[2mm]
\partial_\nu u + u = 0
& \text{on }\gamma\cap\{|\nabla u|=u\}.
\end{cases}
\end{equation}

For details on infinity Laplacian eigenvalue problems, see 
Juutinen, Lindqvist, and Manfredi~\cite{JuutinenLindqvistManfredi1999,juutinen2005}. 
For numerical approximations, see Bozorgnia, Bungert, and Tenbrinck~\cite{BBT2024}.

We establish continuity of the 
eigenvalue and eigenfunction with respect to $p$ in the interior 
of the parameter range.
\begin{theorem} \label{thm:p-continuity1}
Fix $p_0\in(1,\infty)$ and $q\in(1,p_0)$. For $\rho\equiv 1$ and a domain $\Omega$ with $C^2$ boundary, the map
\[
p \mapsto (\mu_1(p), u_1(p)) \in \mathbb{R} \times W^{1,q}(\Omega)
\]
is continuous at $p_0$ in the following sense: as $p\to p_0$,
\[
\mu_1(p) \to \mu_1(p_0) \quad \text{and} \quad 
u_1(p) \to u_1(p_0) \text{ in } W^{1,q}(\Omega).
\]
 
Here, for each $p\in(1,\infty)$, $(\mu_1(p),u_1(p))$ denotes the principal eigenpair of the Robin problem
\eqref{eq:robin_constant_thickness} (equivalently \eqref{eq:robin_limit1} with $\rho\equiv 1$), with $u_1(p)>0$ in $\Omega$,
chosen uniquely by the normalization $\|u_1(p)\|_{L^{p_0}(\Omega)}=1$ (any fixed normalization would do).
\end{theorem}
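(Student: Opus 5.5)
We argue by a standard $\Gamma$-convergence/compactness scheme in the exponent $p$, with the simplicity of the principal eigenvalue (Proposition~\ref{prop:principal_eigenvalue}) used to identify the limit. Here the Robin coefficient is $h\equiv 1$ on $\Gamma=\partial\Omega$. We write $\mathcal{R}_p(v):=\bigl(\int_\Omega|\nabla v|^p+\int_\Gamma|v|^p\,d\sigma\bigr)/\int_\Omega|v|^p$, so that $\mu_1(p)=\inf\mathcal R_p$ by \eqref{eq:rayleigh_mu}. Since $\mathcal R_p$ is $0$-homogeneous, the eigenvalue does not depend on the normalization. The normalization $\|u_1(p)\|_{L^{p_0}}=1$ only fixes a positive multiple of the $L^p$-normalized eigenfunction.

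\textbf{Step 1 (upper semicontinuity).} Use $v=u_1(p_0)$ as a competitor. By Lemma~\ref{lem:regularity}, $u_1(p_0)\in C^{1,\beta}(\overline\Omega)$, so $v$ and $\nabla v$ are bounded and continuous. Dominated convergence then gives $\mathcal R_p(v)\to\mathcal R_{p_0}(v)=\mu_1(p_0)$, hence $\limsup_{p\to p_0}\mu_1(p)\le\mu_1(p_0)$.

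\textbf{Step 2 (uniform bounds and compactness).} Restrict to $p\in[p_-,p_+]\subset(q,\infty)$ around $p_0$. Let $w_p$ be the eigenfunction normalized by $\|w_p\|_{L^p}=1$. Then $\|\nabla w_p\|_{L^p}^p\le\mu_1(p)\le C$ and $\|w_p\|_{L^p(\Gamma)}\le C$. By H\"older on the bounded domain, $w_p$ is bounded in $W^{1,p_-}(\Omega)$, with $p_-$ chosen in $(q,p_0)$. Rellich then gives, along a subsequence, $w_p\rightharpoonup w$ in $W^{1,p_-}$, $w_p\to w$ in $L^{p_-}$ and in $L^{p_-}(\Gamma)$.

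To pass norms in varying exponents, we first obtain a uniform $L^\infty$ bound on $w_p$. This follows from Moser/De Giorgi iteration for the Robin eigenproblem, with constants uniform for $p$ in a compact subinterval (if needed, apply Sobolev with exponent $p_-$ when $p_->n$, or iterate otherwise). With $w_p$ uniformly bounded and convergent a.e., we get $\int|w_p|^p\to\int|w|^{p_0}$, so $\|w\|_{L^{p_0}}=1$ and $w\ge0$, $w\ne0$.

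\textbf{Step 3 (lower bound and identification).} Lower semicontinuity in varying exponents is the main obstacle. For any $s<p_0$ and $p>s$, H\"older gives $\|\nabla w\|_{L^s}\le\liminf\|\nabla w_p\|_{L^s}\le\liminf|\Omega|^{1/s-1/p}\|\nabla w_p\|_{L^p}$. Letting $s\uparrow p_0$ yields $w\in W^{1,p_0}$ and $\int|\nabla w|^{p_0}\le\liminf\int|\nabla w_p|^p$. The boundary term is handled the same way, using the trace convergence together with the $L^\infty$ bound; if the trace term causes trouble, we use Fatou along an a.e.-convergent boundary subsequence. Hence
\[
\mu_1(p_0)\le\mathcal R_{p_0}(w)\le\liminf_{p\to p_0}\mu_1(p).
\]
Combined with Step 1, this gives $\mu_1(p)\to\mu_1(p_0)$. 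Moreover $w$ is a nonnegative minimizer, so $w=u_1(p_0)$ by simplicity, and the limit is independent of the subsequence. The same holds for $u_1(p)=w_p/\|w_p\|_{L^{p_0}}$, since $\|w_p\|_{L^{p_0}}\to1$.

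\textbf{Step 4 (strong $W^{1,q}$ convergence).} Equality in Step 3 forces $\int|\nabla w_p|^p\to\int|\nabla w|^{p_0}$. Strong convergence of the gradients then follows from a monotonicity argument. Test the weak equation of $w_p$ with $w_p-w$, which is admissible since $w\in C^{1,\beta}$. The right-hand side and the boundary terms tend to $0$ by Step 2. We then get
\[
\int\bigl(|\nabla w_p|^{p-2}\nabla w_p-|\nabla w|^{p-2}\nabla w\bigr)\cdot(\nabla w_p-\nabla w)\to0,
\]
where we use that $|\nabla w|^{p-2}\nabla w\to|\nabla w|^{p_0-2}\nabla w$ uniformly. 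The pointwise monotonicity inequalities from Section~\ref{sec:preliminaries} then give $\nabla w_p\to\nabla w$ in measure, and the uniform $L^{p_-}$ bound with $p_->q$ upgrades this by Vitali to $L^q$ convergence. The delicate points are the uniform-in-$p$ $L^\infty$ estimate and the constants in the monotonicity inequality, which degenerate as $p\to 1$ or $p\to 2$ from below; both are harmless on a compact interval around $p_0$.
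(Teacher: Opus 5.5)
Your proposal is correct and follows the paper's route: a Rayleigh-quotient upper bound, compactness in $W^{1,p_-}(\Omega)$ with $p_-\in(q,p_0)$, and identification of the limit via the variational characterization and simplicity of $\mu_1(p_0)$. Along the way you supply two details the paper explicitly omits, namely lower semicontinuity under varying exponents and the monotonicity argument that upgrades weak to strong $W^{1,q}$ convergence. The uniform-in-$p$ $L^\infty$ bound you invoke is not actually needed: with $p_\pm$ close enough to $p_0$, the Sobolev and trace embeddings of $W^{1,p_-}(\Omega)$ already make $|w_p|^p$ uniformly integrable in $\Omega$ and on $\Gamma$, which suffices for every limit passage in Steps 2--4.
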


\begin{proof}
For $p$ in a bounded interval $[p_1, p_2] \subset (1, \infty)$, the 
eigenvalues $\mu_1(p)$ are uniformly bounded above and below by constants 
depending on $\Omega$, $p_1$, and $p_2$. This follows from the Rayleigh 
quotient characterization and the embeddings $W^{1,p_2}(\Omega) \subset 
W^{1,p}(\Omega) \subset W^{1,p_1}(\Omega)$.

Let $p_k \to p_0$ and let $u_k := u_1(p_k)$ be the normalized eigenfunctions. 
By the uniform bounds, $\{u_k\}$ is bounded in $W^{1,p_1}(\Omega)$ for some 
$p_1 < p_0$. By Rellich--Kondrachov, a subsequence converges strongly in 
$L^q(\Omega)$ for $q < p_1^*$ and weakly in $W^{1,p_1}(\Omega)$. 
The conclusion follows from the standard stability theory of principal $p$--Laplacian eigenpairs with respect to the exponent:
view $\mu_1(p)$ as the minimum of the Rayleigh quotient, use compactness in $W^{1,q}(\Omega)$ for any $q<p_0$ along a sequence $p_k\to p_0$,
and identify the limit by the variational characterization and simplicity of the first eigenvalue.

Equivalently, one may invoke Mosco/$\Gamma$–continuity of the energy functionals in $W^{1,q}$ and stability of minimizers; see, e.g., \cite{Le2006} for related results.
We omit the technical details.
\end{proof}

\section{Inverse Problem: Uniqueness and Stability}\label{Inv}
We first establish a boundary non-degeneracy property of the principal eigenfunction: away from the interface where the boundary condition changes, its gradient remains uniformly bounded away from zero in a thin boundary collar. Although interior critical points may cause degeneracy in the linearization of the $p$-Laplacian, all inverse information in our setting is transmitted through boundary identities on the measured Dirichlet portion $\Gamma_D$ away from $\zeta$, where Hopf's lemma ensures $|\nabla u|>0$ and the Cauchy data are well-defined.

We therefore introduce a localized regularization that acts only where $|\nabla u|$ may be small (in the interior) and is identically inactive in the boundary collar, preserving the boundary relations exactly while ensuring the uniform ellipticity needed for the subsequent analysis.

\subsection{Boundary Non-Degeneracy and Localized Regularization}\label{sec:boundary_nondegen}

Let $(\lambda,u)$ be the principal eigenpair of \eqref{eq:p-laplace-robin}, normalized by $\|u\|_{L^p(\Omega)}=1$, with $u>0$ in $\Omega$ (Proposition~\ref{prop:principal_eigenvalue}). Assume $h\in C^1(\gamma)$ and $h\ge 0$ on $\gamma$ (whenever $\inf_\gamma h>0$, the conclusions below on $\gamma$ strengthen to strict non-degeneracy).

Let $\zeta:=\overline{\Gamma_D}\cap\overline{\gamma}\subset\partial\Omega$ denote the (possibly nonempty) interface set where the boundary condition switches; all pointwise boundary statements below are understood on $\partial\Omega\setminus\zeta$ (i.e.\ on the relatively open pieces $\Gamma_D$ and $\gamma$).

\begin{assumption}\label{ass:Sigma_measure_zero}
The interface $\zeta:=\overline{\Gamma_D}\cap\overline{\gamma}\subset\partial\Omega$ is closed and satisfies
\[
\mathcal H^{n-1}(\zeta)=0,
\]
so boundary integrals over $\partial\Omega$ are insensitive to modifications on $\zeta$.
This condition is satisfied, for example, when $\Gamma_D$ and $\gamma$ have Lipschitz boundaries as subsets of $\partial\Omega$ (in which case $\zeta$ is typically $(n\!-\!2)$--dimensional).
\end{assumption}


\begin{proposition}\label{prop:boundary_nondegen}
For the mixed Dirichlet--Robin problem, the principal eigenfunction $u$ satisfies $|\nabla u(x)|>0$ for all $x\in\Gamma_D\setminus\zeta$. Moreover, for any $x\in\gamma\setminus\zeta$ with $h(x)>0$ one has $|\nabla u(x)|>0$ (in particular if $\inf_\gamma h>0$ then $|\nabla u|>0$ on $\gamma\setminus\zeta$).
\end{proposition}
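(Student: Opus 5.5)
The plan is to apply the Hopf-type boundary lemma (Lemma~\ref{lem:hopf}) locally at each boundary point away from the interface $\zeta$. We combine it with the regularity of Lemma~\ref{lem:regularity}, which gives $u\in C^{1,\beta}(\overline{\Omega}\setminus\zeta)$, so that $\nabla u$ is defined pointwise on $\partial\Omega\setminus\zeta$. The positivity $u>0$ in $\Omega$ comes from Proposition~\ref{prop:principal_eigenvalue}. Since $u$ is an eigenfunction with $\lambda>0$ and $u>0$, we have $-\Delta_p u=\lambda u^{p-1}\ge 0$ in $\Omega$, so $u$ is a nonnegative, nontrivial $p$-supersolution. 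This is exactly the setting of Lemma~\ref{lem:hopf}.

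\textbf{Dirichlet part.} Fix $x_0\in\Gamma_D\setminus\zeta$. Lemma~\ref{lem:hopf} is stated for $u\in C^{1,\alpha}(\overline\Omega)$, but $u$ is only $C^{1,\beta}$ away from $\zeta$, so I would localize first. Because $\partial\Omega$ is $C^2$ and $x_0\notin\zeta$, choose a small ball $B$ centred on the inward normal at $x_0$ with $\overline B\subset\Omega\cup\{x_0\}$ (interior ball condition), with $B$ contained in a neighbourhood where $u$ is $C^{1,\beta}$ up to the boundary. Apply the Hopf lemma (in its standard interior-ball form from \cite{Vazquez84,Pucci-Serrin07}) to $u$ on $B$: $u>0$ in $B$, $u(x_0)=0$, and $-\Delta_p u\ge0$. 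This yields $\partial_\nu u(x_0)<0$, hence $|\nabla u(x_0)|\ge|\partial_\nu u(x_0)|>0$.

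\textbf{Robin part.} Fix $x_0\in\gamma\setminus\zeta$ with $h(x_0)>0$. First show $u(x_0)>0$. By continuity $u(x_0)\ge0$. If $u(x_0)=0$, the same localized Hopf argument gives $\partial_\nu u(x_0)<0$, so $\nabla u(x_0)\neq0$ and $|\nabla u(x_0)|^{p-2}\partial_\nu u(x_0)<0$. But the Robin condition, which holds pointwise by Lemma~\ref{lem:regularity} since $u$ is a strong solution with continuous gradient up to $\gamma\setminus\zeta$, gives $|\nabla u|^{p-2}\partial_\nu u=-h\,u^{p-1}=0$ at $x_0$, a contradiction. Hence $u(x_0)>0$, and the Robin condition gives $|\nabla u(x_0)|^{p-2}\partial_\nu u(x_0)=-h(x_0)u(x_0)^{p-1}<0$. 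This quantity is nonzero, which forces $\nabla u(x_0)\ne0$. For $1<p<2$ one must read $|\nabla u|^{p-2}\partial_\nu u$ as the continuous flux $|\nabla u|^{p-2}\nabla u\cdot\nu$, which vanishes when $\nabla u=0$, so the conclusion still holds. The final clause for $\inf_\gamma h>0$ then follows immediately.

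\textbf{Main obstacle.} I expect the main difficulty to be justifying the pointwise use of the Hopf lemma and of the Robin condition given only local regularity away from $\zeta$. The steps are: localize to an interior ball touching at $x_0$; check that the strict positivity and $C^{1}$-up-to-boundary hypotheses hold there; and confirm that the weak Robin condition becomes a pointwise identity on $\gamma\setminus\zeta$. For the last point I would rely on the strong-solution statement of Lemma~\ref{lem:regularity}, obtained by integrating the weak form against test functions concentrated near $x_0$ and using continuity of the flux.
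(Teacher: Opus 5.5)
Your proposal is correct and follows essentially the same route as the paper. On $\Gamma_D$ you apply Hopf's lemma, and on $\gamma$ you use a Hopf-versus-Robin contradiction to get $u(x_0)>0$, after which $h(x_0)>0$ forces a nonzero flux and hence $\nabla u(x_0)\neq 0$. Your localization of Hopf's lemma to an interior tangent ball (needed because $u$ is only $C^{1,\beta}$ away from $\zeta$) and your use of $|\nabla u|\ge|\partial_\nu u|$ instead of the paper's remark that $\nabla u$ is purely normal on $\Gamma_D$ are harmless refinements of the paper's argument.
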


\begin{proof}
  Hopf's lemma guarantees $\partial_\nu u < 0$ since $u > 0$ in $\Omega$ and $u = 0$ on $\Gamma_D$. Since $u \equiv 0$ on $\Gamma_D$, the gradient is purely normal: $\nabla u = (\partial_\nu u)\nu$, hence $|\nabla u| = |\partial_\nu u| > 0$.

 On $\gamma$, note  that $u$ cannot vanish on the relatively open set $\gamma$: if $u(x_0)=0$ at some $x_0\in\gamma$, then $x_0$ is a boundary minimum of a nontrivial nonnegative solution of $-\Delta_p u=\lambda u^{p-1}\ge 0$, so Hopf's lemma gives $\partial_\nu u(x_0)<0$, whereas the Robin condition yields $|\nabla u|^{p-2}\partial_\nu u(x_0)=-h(x_0)u(x_0)^{p-1}=0$, a contradiction. Hence $u>0$ on $\gamma$. The Robin condition gives
\[
|\nabla u|^{p-2}\partial_\nu u = -h\,u^{p-1} \le 0 \quad \text{on }\gamma,
\]
so $\partial_\nu u\le 0$ on $\gamma$. At any point $x\in\gamma$ with $h(x)>0$ we obtain the strict inequality
$|\nabla u(x)|^{p-2}\partial_\nu u(x)<0$, hence $\partial_\nu u(x)<0$ and therefore $|\nabla u(x)|>0$.
\end{proof}

Even in the pure Dirichlet case ($\gamma=\emptyset$), the principal eigenfunction attains its maximum in $\Omega$, hence has at least one interior critical point where $\nabla u=0$. For $p=2$ on convex domains, the classical log-concavity theory (see, e.g., \cite{sakaguchi1987,kawohl1985}) forces uniqueness of the maximum point. For the inverse problem considered here, the key point is that all information transfer between $\Gamma_D$ and $\gamma$ is mediated through boundary identities on the smooth, relatively open pieces away from $\zeta$, where $|\nabla u|>0$ by Proposition~\ref{prop:boundary_nondegen}.

\begin{definition}\label{def:localized_reg}
Fix $\eta>0$ and set $\Gamma_{D,\eta}:=\{x\in\Gamma_D:\mathrm{dist}(x,\zeta)\ge\eta\}$. Since $|\nabla u|>0$ on $\Gamma_D\setminus\zeta$ (Proposition~\ref{prop:boundary_nondegen}) and $\nabla u$ extends continuously to $\Gamma_D\setminus\zeta$ (Lemma~\ref{lem:regularity}), there exists $c_0=c_0(\eta)>0$ and a neighborhood $\mathcal U_\eta\subset\Omega$ of $\Gamma_{D,\eta}$ such that $|\nabla u(x)|\ge c_0$ for all $x\in\mathcal U_\eta$. Let $0<\delta<c_0/2$.

 Define the \emph{localized regularized gradient magnitude} by
\begin{equation}\label{eq:localized_reg}
|\nabla u|_{\delta,\mathrm{loc}} := \sqrt{|\nabla u|^2 + \delta^2 \chi_\delta(|\nabla u|)},
\end{equation}
where $\chi_\delta : [0,\infty) \to [0,1]$ is a smooth cutoff function satisfying:
\begin{enumerate}[(i)]
\item $\chi_\delta(t)=1$ for $t\le \delta$,
\item $\chi_\delta(t)=0$ for $t\ge 2\delta$,
\item $\chi_\delta$ is monotone decreasing and $|\chi_\delta'(t)|\le 2/\delta$.
\end{enumerate}
The corresponding regularized coefficient matrix is
\begin{equation}\label{eq:A_delta_loc}
A_\delta(x) := |\nabla u|_{\delta,\mathrm{loc}}^{p-2} I + (p-2)|\nabla u|_{\delta,\mathrm{loc}}^{p-4} \nabla u \otimes \nabla u.
\end{equation}
\end{definition}

\begin{lemma}\label{lem:reg_invisible}
Let $u$ be the principal eigenfunction. Fix $\eta>0$ and set
\[
\Gamma_{D,\eta}:=\{x\in\Gamma_D:\mathrm{dist}(x,\zeta)\ge\eta\}.
\]
By Lemma~\ref{lem:regularity} and Hopf's lemma on $\Gamma_D$, $\nabla u$ extends continuously to $\Gamma_{D,\eta}$ and satisfies $|\nabla u|=|\partial_\nu u|>0$ there; hence $c_0:=\min_{\Gamma_{D,\eta}}|\nabla u|>0$. Let $\mathcal U_\eta$ be a neighborhood of $\Gamma_{D,\eta}$ in $\overline\Omega$ such that $|\nabla u(x)|\ge c_0/2$ for all $x\in\mathcal U_\eta$.

For any $0 < \delta < c_0/2$:
\begin{enumerate}[(i)]
\item $|\nabla u|_{\delta,\mathrm{loc}} = |\nabla u|$ in $\mathcal{U}_\eta$,
\item $A_\delta(x) = A(x)$ in $\mathcal{U}_\eta$, where $A(x) = |\nabla u|^{p-2}I + (p-2)|\nabla u|^{p-4}\nabla u \otimes \nabla u$,
\item The coefficient matrix $A_\delta(x)$ is uniformly elliptic and bounded on $\overline{\Omega}$: there exist constants $0<\theta_\delta\le \Theta_\delta<\infty$ (depending on $\delta$, $p$, and $\|\nabla u\|_{L^\infty(\Omega)}$) such that
\begin{equation}\label{eq:uniform_ellip_delta}
\theta_\delta |\xi|^2 \le A_\delta(x)\xi\cdot\xi \le \Theta_\delta |\xi|^2
\quad\text{for all }x\in\overline{\Omega},\ \xi\in\mathbb{R}^n.
\end{equation}
Moreover, in $\mathcal{U}_\eta$ the ellipticity bounds can be taken independent of $\delta$: there exist $0<\theta_0\le \Theta_0<\infty$ (depending only on $p$ and $c_0$) such that
\begin{equation}\label{eq:delta_indep_bounds}
\theta_0 |\xi|^2 \le A_\delta(x)\xi\cdot\xi \le \Theta_0 |\xi|^2
\quad\text{for all }x\in\mathcal{U}_\eta,\ \xi\in\mathbb{R}^n.
\end{equation}
\end{enumerate}
\end{lemma}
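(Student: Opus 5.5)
Parts (i) and (ii) follow directly from the cutoff structure, and part (iii) from an eigenvalue computation for the matrix. Throughout, write $s(x):=|\nabla u(x)|$ and $r(x):=|\nabla u|_{\delta,\mathrm{loc}}(x)$.

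\textbf{Parts (i) and (ii).} On $\mathcal U_\eta$ we have $s\ge c_0/2>\delta$. I will check that the cutoff vanishes there, noting that property (ii) of $\chi_\delta$ only gives $\chi_\delta(t)=0$ for $t\ge 2\delta$. This requires $c_0/2\ge 2\delta$, which holds if we shrink to $\delta<c_0/4$, or equivalently define $\mathcal U_\eta$ with the threshold $|\nabla u|\ge 2\delta$. I will make this adjustment explicit, since it is harmless: $\delta$ is simply taken smaller relative to $c_0$. Then $\chi_\delta(s)=0$, so $r=s$, which gives (i). Substituting (i) into \eqref{eq:A_delta_loc} gives $A_\delta=A$ on $\mathcal U_\eta$, which is (ii).

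\textbf{Spectral formula.} For (iii), the matrix $A_\delta=r^{p-2}I+(p-2)r^{p-4}\nabla u\otimes\nabla u$ is symmetric. When $\nabla u\neq0$, its eigenvalues are $r^{p-2}$ on $\nabla u^\perp$ and
\[
r^{p-2}+(p-2)r^{p-4}s^2=r^{p-4}\bigl(r^2+(p-2)s^2\bigr)
\]
in the direction $\nabla u$. When $\nabla u=0$, the matrix is $r^{p-2}I$.

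\textbf{Lower bound $r\ge \delta/\sqrt2$.} If $s\le\delta$, then $\chi_\delta=1$ and $r=\sqrt{s^2+\delta^2}\ge\delta$. If $s>\delta$, then $r\ge s>\delta$. Together with $r\le \sqrt{\|\nabla u\|_\infty^2+\delta^2}=:M$, this confines $r$ to $[\delta,M]$. Here $\|\nabla u\|_\infty<\infty$ is taken from Lemma~\ref{lem:regularity}; near $\zeta$ this is implicitly assumed, as in the statement.

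\textbf{Bounds on the second eigenvalue.} Since $0\le s^2\le r^2$, we have $r^2+(p-2)s^2\in[\min(1,p-1)r^2,\max(1,p-1)r^2]$. Hence both eigenvalues lie between $\min(1,p-1)\,r^{p-2}$ and $\max(1,p-1)\,r^{p-2}$.

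\textbf{Constants in \eqref{eq:uniform_ellip_delta}.} Because $r\in[\delta,M]$, the quantity $r^{p-2}$ is bounded above and below by $\delta^{p-2}$ and $M^{p-2}$, in an order that depends on the sign of $p-2$. This yields
\[
\theta_\delta=\min(1,p-1)\min(\delta^{p-2},M^{p-2}),\qquad \Theta_\delta=\max(1,p-1)\max(\delta^{p-2},M^{p-2}).
\]

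\textbf{Constants in \eqref{eq:delta_indep_bounds}.} On $\mathcal U_\eta$ we have $r=s\in[c_0/2,\|\nabla u\|_\infty]$, and the same computation gives constants depending only on $p$, $c_0$, and the gradient bound. The dependence on the gradient bound is implicit in the statement.

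The main obstacle is bookkeeping rather than analysis. It consists of the compatibility between $\delta<c_0/2$ and the cutoff support $[0,2\delta]$, and the sign-dependent choice of powers when $p<2$ versus $p>2$. I will handle the first by the adjustment described for parts (i) and (ii), and the second by taking min/max over the two endpoints.
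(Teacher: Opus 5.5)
Your argument is the paper's: (i)--(ii) follow because the cutoff vanishes on $\mathcal U_\eta$. Part (iii) follows by bounding $|\xi|^2+(p-2)(\nabla u\cdot\xi)^2/r^2$ between $\min(1,p-1)|\xi|^2$ and $\max(1,p-1)|\xi|^2$, using $|\nabla u|\le r$ and $\delta\le r\le\sqrt{\|\nabla u\|_\infty^2+\delta^2}$; your eigenvalue phrasing is equivalent to the paper's quadratic-form computation.

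Both of your adjustments are genuinely needed, not cosmetic. The paper's proof silently uses $|\nabla u|\ge c_0>2\delta$ on $\mathcal U_\eta$, which contradicts the threshold $c_0/2$ in the statement. Take $\delta\in(c_0/3,c_0/2)$ and a point of $\mathcal U_\eta$ with $|\nabla u|\in[c_0/2,3\delta/2)$. There the bound $|\chi_\delta'|\le 2/\delta$ forces $\chi_\delta(|\nabla u|)>0$, so (i) can fail as literally stated. Also, for $p\neq 2$ the $\delta$-independent constants on $\mathcal U_\eta$ depend on $\|\nabla u\|_\infty$, not only on $p$ and $c_0$, exactly as you note.
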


\begin{proof}
(i)--(ii) If $x\in\mathcal{U}_\eta$, then $|\nabla u(x)|\ge c_0>2\delta$, hence $\chi_\delta(|\nabla u(x)|)=0$ 
and therefore $|\nabla u|_{\delta,\mathrm{loc}}=|\nabla u|$ and $A_\delta=A$ on $\mathcal{U}_\eta$.

(iii) Set $s(x):=|\nabla u(x)|_{\delta,\mathrm{loc}}$. By construction $s(x)\ge \delta$ for all $x\in\overline{\Omega}$, and also 
\[
s(x)\le \sqrt{|\nabla u(x)|^2+\delta^2}\le M+\delta, 
\]
where $M:=\|\nabla u\|_{L^\infty(\Omega)}$.
Moreover,
\[
A_\delta(x)\xi\cdot\xi
= s^{p-2}|\xi|^2 + (p-2)s^{p-4}(\nabla u\cdot\xi)^2
= s^{p-2}\Big(|\xi|^2 + (p-2)\frac{(\nabla u\cdot\xi)^2}{s^2}\Big).
\]
Since $(\nabla u\cdot\xi)^2\le |\nabla u|^2|\xi|^2\le s^2|\xi|^2$, the bracketed factor lies between $\min(1,p-1)|\xi|^2$ and $\max(1,p-1)|\xi|^2$. Hence
\[
\min(1,p-1)\, s^{p-2}|\xi|^2 \le A_\delta(x)\xi\cdot\xi \le \max(1,p-1)\, s^{p-2}|\xi|^2.
\]
Using $s\in[\delta,M+\delta]$ yields \eqref{eq:uniform_ellip_delta} with, for example,
\[
\theta_\delta=\min(1,p-1)\times
\begin{cases}
\delta^{\,p-2}, & p\ge 2,\\
(M+\delta)^{\,p-2}, & 1<p<2,
\end{cases}
\quad
\Theta_\delta=\max(1,p-1)\times
\begin{cases}
(M+\delta)^{\,p-2}, & p\ge 2,\\
\delta^{\,p-2}, & 1<p<2.
\end{cases}
\]
In $\mathcal{U}_\eta$ we have $s=|\nabla u|\in[c_0,M]$, so the same eigenvalue computation gives the $\delta$–independent bounds \eqref{eq:delta_indep_bounds}.
\end{proof}

The localized regularization~\eqref{eq:localized_reg} has the key feature that it is \emph{inactive} in the fixed collar $\mathcal U_\eta$ of $\Gamma_{D,\eta}$ where $|\nabla u|\ge c_0/2>2\delta$, hence $A_\delta=A$ on $\mathcal U_\eta$. Consequently, boundary identities used in the inverse problem on the measured Dirichlet portion $\Gamma_D$ at positive distance from $\zeta$ are independent of $\delta$.
Any use of $A_\delta$ is therefore a technical device to obtain globally bounded uniformly elliptic coefficients, while leaving the boundary information channel unchanged on the measured boundary part.

\subsection{Uniqueness}\label{sec:uniqueness}

For the uniqueness theorem, we reduce the inverse problem to a \emph{linear} divergence–form equation for the difference $w=u_1-u_2$. To conclude from vanishing Cauchy data on $\Gamma_D$, we apply the boundary Cauchy unique continuation theorem (Theorem~\ref{thm:boundary_ucp}). Boundary non-degeneracy guarantees uniform ellipticity \emph{locally} near $\partial\Omega$, but it is not automatic in the interior because the linearized $p$--Laplacian may degenerate at interior critical points. Accordingly, we state an explicit path-ellipticity assumption that ensures the coefficients are globally uniformly elliptic and regular enough to invoke Theorem~\ref{thm:boundary_ucp}.

\begin{assumption}[Uniform ellipticity]\label{ass:path_ellipticity}
Let $p\ge 2$ and let $(\lambda_j,u_j)$ be principal eigenpairs corresponding to Robin coefficients $h_j$ $(j=1,2)$ (with the usual normalization and $u_j>0$ in $\Omega$).

Define $F(\xi) := |\xi|^{p-2}\xi$ for $\xi \in \mathbb{R}^n$. The coefficient matrix
\begin{equation}\label{eq:coeff_matrix_path}
\bar{A}(x) := \int_0^1 DF\!\left(\nabla u_2(x) + t(\nabla u_1(x) - \nabla u_2(x))\right) dt
\end{equation}
is uniformly elliptic on $\overline{\Omega}$: there exist constants $0<\theta\le \Theta<\infty$ such that
\[
\theta|\xi|^2 \le \bar{A}(x)\xi\cdot\xi \le \Theta|\xi|^2
\quad\text{for all }x\in\overline{\Omega},\ \xi\in\mathbb{R}^n.
\]
Moreover, the regularity required by Theorem~\ref{thm:boundary_ucp} holds for $\bar A$, i.e.
\[
\bar A\in C^{0,1}(\overline{\Omega})\quad (\text{i.e.\ }\bar A\text{ is Lipschitz on }\overline\Omega).
\]
\end{assumption}

Boundary non-degeneracy yields uniform ellipticity of $\bar A$ \emph{in a boundary collar of the measured Dirichlet part} (since $|\nabla u_j|$ are bounded away from $0$ on $\Gamma_D\setminus\zeta$ and hence near $\Gamma_D$ away from $\zeta$).
However, to invoke Theorem~\ref{thm:boundary_ucp} we need \emph{global} uniform ellipticity and the stated regularity on $\overline{\Omega}$. Thus Assumption~\ref{ass:path_ellipticity} makes explicit the requirement that the linearized coefficients in \eqref{eq:coeff_matrix_path} do not degenerate in the interior along the segment connecting $\nabla u_2$ and $\nabla u_1$. In the special case $p=2$, one has $DF\equiv I$, so the ellipticity/regularity requirements are automatically satisfied.

\begin{theorem}\label{thm:uniqueness}
Let $p\ge 2$ and let $\Omega \subset \mathbb{R}^n$ ($n \geq 2$) be a bounded domain with $C^2$ boundary satisfying $\partial\Omega = \Gamma_D \cup \gamma$, where $\Gamma_D$ and $\gamma$ are disjoint, relatively open, and nonempty subsets.
Let $h_1, h_2 \in C^1(\gamma)$ with $h_j \ge 0$, and let $(\lambda_j, u_j)$ be the corresponding principal eigenpairs for problem~\eqref{eq:p-laplace-robin}, normalized by $\|u_j\|_{L^p(\Omega)} = 1$ with $u_j > 0$ in $\Omega$. Suppose that Assumption~\ref{ass:path_ellipticity} holds for the pair $(u_1, u_2)$.
If
\begin{equation}\label{eq:uniqueness_data}
\lambda_1 = \lambda_2 \quad \text{and} \quad 
\left.|\nabla u_1|^{p-2}\partial_{\nu}u_1\right|_{\Gamma_{D}}
=
\left.|\nabla u_2|^{p-2}\partial_{\nu}u_2\right|_{\Gamma_{D}}
\end{equation}
then $h_1 = h_2$ on $\gamma$.
\end{theorem}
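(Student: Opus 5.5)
The plan is to reduce the problem to a linear equation for the difference $w:=u_1-u_2$, show that $w$ has vanishing Cauchy data on $\Gamma_D$, conclude $w\equiv 0$ from the boundary unique continuation result (Theorem~\ref{thm:boundary_ucp}), and then read off $h_1=h_2$ from the Robin condition on $\gamma$.

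\emph{Step 1: the linear equation.} Set $\lambda:=\lambda_1=\lambda_2$ and $F(\xi)=|\xi|^{p-2}\xi$. By the fundamental theorem of calculus along the segment from $\nabla u_2$ to $\nabla u_1$,
\[
F(\nabla u_1)-F(\nabla u_2)=\bar A(x)\nabla w ,
\]
with $\bar A$ given by \eqref{eq:coeff_matrix_path}. In the same way,
\[
u_1^{p-1}-u_2^{p-1}=B(x)\,w,\qquad B(x):=(p-1)\int_0^1 \bigl|u_2+t w\bigr|^{p-2}\,dt .
\]
Here $B\in L^\infty(\Omega)$ because $u_j\in C^{1,\beta}$ are bounded (Lemma~\ref{lem:regularity}) and $p\ge2$. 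Subtracting the two weak formulations \eqref{eq:weak_formulation} and testing with $\varphi\in C_c^\infty(\Omega)$ gives
\[
-\operatorname{div}(\bar A\nabla w)=\lambda B w \quad\text{in }\Omega .
\]
Assumption~\ref{ass:path_ellipticity} supplies uniform ellipticity and Lipschitz regularity of $\bar A$, so the hypotheses of Theorem~\ref{thm:boundary_ucp} on the equation are met. The assumption $\lambda_1=\lambda_2$ is exactly what makes the zero-order term linear in $w$.

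\emph{Step 2: Cauchy data on $\Gamma_D$.} Since $u_1=u_2=0$ on $\Gamma_D$, the Dirichlet trace $w=0$ there. For the conormal trace,
\[
(\bar A\nabla w)\cdot\nu=\bigl(F(\nabla u_1)-F(\nabla u_2)\bigr)\cdot\nu=|\nabla u_1|^{p-2}\partial_\nu u_1-|\nabla u_2|^{p-2}\partial_\nu u_2=0
\]
on $\Gamma_D$ by \eqref{eq:uniqueness_data}. This identity holds pointwise on $\Gamma_D\setminus\zeta$ because $\nabla u_j$ extends continuously there (Lemma~\ref{lem:regularity}). To be careful, I would take $\Gamma_0$ to be a nonempty relatively open subset of $\Gamma_D$ at positive distance from $\zeta$, where everything is $C^{1,\beta}$ and the traces are classical. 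Theorem~\ref{thm:boundary_ucp} then yields $w\equiv0$, that is, $u_1=u_2=:u$ in $\Omega$.

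\emph{Step 3: identifying $h$.} With $u_1=u_2$, the fluxes agree on $\gamma\setminus\zeta$. Subtracting the Robin conditions gives
\[
(h_1-h_2)\,u^{p-1}=0 \quad\text{on }\gamma\setminus\zeta .
\]
By Proposition~\ref{prop:boundary_nondegen}, $u>0$ on $\gamma\setminus\zeta$, so $h_1=h_2$ on $\gamma\setminus\zeta$. Continuity of $h_j$, or the weak form together with Assumption~\ref{ass:Sigma_measure_zero}, then gives equality on all of $\gamma$. If one prefers to avoid pointwise traces, the same conclusion follows from the weak formulations: for every test function $\varphi$ vanishing on $\Gamma_D$, $\int_\gamma (h_1-h_2)u^{p-1}\varphi\,d\sigma=0$, hence $(h_1-h_2)u^{p-1}=0$ $\sigma$-a.e. on $\gamma$.

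\emph{Main obstacle.} The main difficulty is Step 1/2: justifying that $w$ is a legitimate weak $W^{1,2}$ solution with well-defined vanishing conormal trace, to which Theorem~\ref{thm:boundary_ucp} applies, given possible interior degeneracy of the linearization. This is precisely what Assumption~\ref{ass:path_ellipticity} is designed to absorb, so I would lean on it directly. Local boundary regularity near $\Gamma_0$ (away from $\zeta$) handles the trace issue, and $w\in W^{1,2}$ follows from $u_j\in C^{1,\beta}_{\rm loc}\cap W^{1,p}$ with $p\ge2$.
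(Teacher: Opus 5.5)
Your proposal is correct and follows the paper's proof. Both linearize the difference $w=u_1-u_2$ via the path matrix $\bar A$, apply the boundary Cauchy unique continuation theorem on $\Gamma_D$, and then subtract the Robin conditions using $u>0$ on $\gamma$. The only variation is the conormal trace: you read off $(\bar A\nabla w)\cdot\nu=0$ directly from $\bar A\nabla w=F(\nabla u_1)-F(\nabla u_2)$ and the flux data, whereas the paper detours through Hopf's lemma and strict monotonicity of $t\mapsto t^{p-1}$ to get $\partial_\nu w=0$ and then uses $\bar A\nu=\kappa\nu$ on $\Gamma_D$. Your shortcut is valid and slightly cleaner.
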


\begin{proof}
Set $\lambda := \lambda_1 = \lambda_2$ and define $w := u_1 - u_2$. By Lemma~\ref{lem:regularity}, 
since $h_j \in C^1(\gamma)$ and $h_j \ge 0$, and $\partial\Omega \in C^2$, each eigenfunction $u_j$ belongs to $C^{1,\alpha}(\overline{\Omega}\setminus\zeta)$ for some $\alpha \in (0,1)$ (with $\zeta:=\overline{\Gamma_D}\cap\overline{\gamma}$). In particular, $u_j$ and $\nabla u_j$ extend continuously to $\partial\Omega\setminus\zeta$ (hence on $\Gamma_D$ and on $\gamma$).

Since both $u_1$ and $u_2$ vanish on $\Gamma_D$, we have $w|_{\Gamma_D} = 0$. Moreover, since $u_j \equiv 0$ on the relatively open set $\Gamma_D$, the tangential derivatives vanish there: for any $x \in \Gamma_D$ and tangent vector $\tau$,
the tangential derivative $\partial_\tau u_j(x)=0$ (the trace of $u_j$ is constant on $\Gamma_D$).
Therefore $\nabla u_j(x) = (\partial_\nu u_j)\, \nu(x)$ for all $x \in \Gamma_D$, which gives $|\nabla u_j(x)| = |\partial_\nu u_j(x)|$. Applying Hopf's lemma to $u_j$ on $\Gamma_D$---noting that $u_j > 0$ in $\Omega$ and $-\Delta_p u_j = \lambda_j u_j^{p-1} > 0$ in $\Omega$---yields $\partial_\nu u_1 < 0$ and $\partial_\nu u_2 < 0$ on $\Gamma_D$. Writing $a_j := -\partial_\nu u_j > 0$ on $\Gamma_D$, the flux condition in~\eqref{eq:uniqueness_data} gives $a_1^{p-1} = a_2^{p-1}$ on $\Gamma_D$. Since the map $t \mapsto t^{p-1}$ is strictly increasing on $(0,\infty)$ for $p > 1$, we conclude $a_1 = a_2$, hence
\begin{equation}\label{eq:w_neumann}
\partial_\nu w|_{\Gamma_D} = 0.
\end{equation}

Since $w\equiv 0$ on the relatively open set $\Gamma_D$, its tangential derivatives vanish on $\Gamma_D$,
hence $\nabla w=(\partial_\nu w)\nu$ there. Moreover, for $x\in\Gamma_D$ the vectors
$\nabla u_2(x)+t(\nabla u_1(x)-\nabla u_2(x))$ are parallel to $\nu(x)$ for all $t\in[0,1]$,
so $DF(\cdot)$ maps $\nu(x)$ to a multiple of $\nu(x)$ and therefore $\bar A(x)\nu(x)=\kappa(x)\nu(x)$
on $\Gamma_D$ for some $\kappa(x)>0$. Consequently,
\[
(\bar A\nabla w)\cdot\nu=\kappa\,\partial_\nu w=0 \quad \text{on }\Gamma_D.
\]

Subtracting the eigenvalue equations satisfied by $u_1$ and $u_2$ gives
\[
-\operatorname{div}\!\left(|\nabla u_1|^{p-2}\nabla u_1 - |\nabla u_2|^{p-2}\nabla u_2\right)
= \lambda\left(|u_1|^{p-2}u_1 - |u_2|^{p-2}u_2\right) \quad \text{in } \Omega.
\]
Define $F(\xi) := |\xi|^{p-2}\xi$ for $\xi \in \mathbb{R}^n$ and $G(s) := |s|^{p-2}s$ for $s \in \mathbb{R}$. By the fundamental theorem of calculus,
\[
F(\nabla u_1) - F(\nabla u_2) 
= \left(\int_0^1 DF\!\left(\nabla u_2 + t(\nabla u_1 - \nabla u_2)\right) dt\right) 
(\nabla u_1 - \nabla u_2)
= \bar{A}(x)\nabla w,
\]
where $\bar{A}(x)$ is defined by~\eqref{eq:coeff_matrix_path} and 
$DF(\xi) = |\xi|^{p-2}I + (p-2)|\xi|^{p-4}\xi\otimes\xi,$  
with the convention $|\xi|^{p-4}\xi\otimes\xi:=0$ when $\xi=0$. Similarly, defining
\[
\bar{B}(x) := \int_0^1 G'(u_2(x) + t(u_1(x) - u_2(x))) \, dt = (p-1) \int_0^1 |u_2 + t(u_1 - u_2)|^{p-2} \, dt,
\]
we have $G(u_1) - G(u_2) = \bar{B}(x) w$.

Since $u_1,u_2\in L^\infty(\Omega)$ (by standard $L^\infty$ bounds/maximum principle for nonnegative eigenfunctions of $-\Delta_p u=\lambda u^{p-1}$ with mixed Dirichlet--Robin boundary conditions; see, e.g., \cite{Lieberman91,Tolksdorf84}), we have $\bar B\in L^\infty(\Omega)$.

Consequently, $w$ satisfies the linear divergence-form equation
\begin{equation}\label{eq:w_equation}
-\operatorname{div}(\bar{A}(x)\nabla w) = \lambda \bar{B}(x)\, w \quad \text{in } \Omega.
\end{equation}

By Assumption~\ref{ass:path_ellipticity}, $\bar A$ is uniformly elliptic on $\overline\Omega$ with the regularity required by Theorem~\ref{thm:boundary_ucp}, while $\bar B\in L^\infty(\Omega)$. Since $p\ge 2$ and $\Omega$ is bounded, the embedding $W^{1,p}(\Omega)\hookrightarrow W^{1,2}(\Omega)$ holds; hence $w\in W^{1,2}(\Omega)$ and Theorem~\ref{thm:boundary_ucp} is applicable to \eqref{eq:w_equation}. Since $w=0$ and $(\bar A\nabla w)\cdot\nu=0$ on the relatively open set $\Gamma_D$,
Theorem~\ref{thm:boundary_ucp} (with $\Gamma_0 = \Gamma_D$) yields $w\equiv 0$ in $\Omega$. Hence $u_1=u_2$ in $\Omega$ and (by $C^{1,\alpha}$ regularity) on $\partial\Omega$, in particular on $\gamma$.
In particular, on $\gamma$ we have $u_1=u_2$ and $\nabla u_1=\nabla u_2$ pointwise (since $u_j\in C^{1,\alpha}(\overline{\Omega}\setminus\zeta)$), hence $|\nabla u_1|^{p-2}\partial_\nu u_1=|\nabla u_2|^{p-2}\partial_\nu u_2$ on $\gamma$.
Both functions satisfy the Robin boundary condition on $\gamma$:
\[
|\nabla u_1|^{p-2}\partial_\nu u_1 + h_1 u_1^{p-1} = 0, \quad
|\nabla u_2|^{p-2}\partial_\nu u_2 + h_2 u_2^{p-1} = 0.
\]
Subtracting these conditions and using $u_1=u_2$ and $|\nabla u_1|^{p-2}\partial_\nu u_1=|\nabla u_2|^{p-2}\partial_\nu u_2$ on $\gamma$ yields $(h_1 - h_2) u_1^{p-1} = 0$ on $\gamma$.

To conclude, we use that $u_1>0$ on $\gamma$: if $u_1(x_0)=0$ for some $x_0\in\gamma$, then $x_0$ is a boundary minimum of the nonnegative solution of $-\Delta_p u_1=\lambda u_1^{p-1}\ge 0$, and the Hopf boundary point lemma (boundary minimum version) gives $\partial_\nu u_1(x_0)<0$. On the other hand, the Robin condition gives $|\nabla u_1(x_0)|^{p-2}\partial_\nu u_1(x_0)= -h_1(x_0)\,u_1(x_0)^{p-1}=0$, which is incompatible with $\partial_\nu u_1(x_0)<0$. Hence $u_1>0$ on $\gamma$, and $(h_1-h_2)u_1^{p-1}=0$ implies $h_1=h_2$ on $\gamma$.
\end{proof}

 Note that for $p = 2$, the coefficient matrix simplifies to $\bar{A}(x) = I$ (the identity), which is trivially uniformly elliptic. In this case, Theorem~\ref{thm:uniqueness} recovers the uniqueness result of Santacesaria and Yachimura~\cite{sy2020}. Our contribution is the extension to the nonlinear case $p \neq 2$, which requires careful attention to possible  degeneracy of the linearized $p$-Laplacian at interior critical points.
 
\subsection{Fr\'echet Differentiability of the Solution Map}\label{sec:frechet}

The stability estimate requires understanding how the eigenpair $(u(h),\lambda(h))$
depends on the Robin coefficient $h$. In this subsection we establish \emph{local} Fr\'echet 
differentiability of the solution map by differentiating the original eigenvalue 
problem. Throughout this subsection we assume $p\ge 2$, which is the regime used in the stability theorem; the case $1<p<2$ requires additional weighted-space arguments and is not needed below.

The linearization involves the coefficient matrix
\[
A(x) = DF(\nabla u(x)) = |\nabla u|^{p-2}I + (p-2)|\nabla u|^{p-4}\nabla u \otimes \nabla u,
\]
which degenerates where $|\nabla u| = 0$ when $p>2$. This degeneracy does not prevent defining the Fr\'echet derivative (the Nemytskii map $F(\xi)=|\xi|^{p-2}\xi$ is $C^1$ on $\R^n$ for $p\ge2$), but it \emph{does} obstruct proving invertibility of the linearized operator without an explicit nondegeneracy assumption. The localized regularization from Definition~\ref{def:localized_reg} will be employed later as a technical tool for boundary unique continuation; it does not enter the definition of the Fr\'echet derivative.
Since by Lemma~\ref{lem:reg_invisible} the 
regularization is inactive in the boundary collar $\mathcal U_\eta$ where 
$|\nabla u|\ge c_0>2\delta$, all boundary identities relevant to the inverse problem 
remain unaffected.

The key analytic input for the implicit function theorem is the invertibility of the augmented linearization (eigenvalue equation + normalization). Because $A(x)$ may lose uniform ellipticity at interior critical points of $u$ when $p>2$, we state this as an explicit hypothesis at the reference coefficient.

\begin{assumption}\label{ass:lin_inv}
Let $p\ge 2$, fix $h_0\in \mathcal{H}_{ad}:=\{h\in C^1(\gamma):\inf_\gamma h>0\}$, and let
$(\lambda_0,u_0)$ be the associated principal eigenpair of~\eqref{eq:p-laplace-robin}, normalized by
$\|u_0\|_{L^p(\Omega)}=1$. Set
\[
V:=\{v\in W^{1,p}(\Omega): v=0\ \text{on }\Gamma_D\},\qquad
V_\perp:=\left\{v\in V:\int_\Omega |u_0|^{p-2}u_0\,v\,dx=0\right\},
\]
and define $\mathcal M_0:V\to V^*$ by
\begin{equation}\label{eq:linearized_op_M_ass}
\langle \mathcal{M}_0[v], \phi \rangle :=
\int_\Omega A_0(x)\nabla v \cdot \nabla \phi\,dx 
+ (p-1)\int_\gamma h_0|u_0|^{p-2}v\,\phi\,d\sigma 
- \lambda_0(p-1)\int_\Omega |u_0|^{p-2}v\,\phi\,dx,
\end{equation}
where $A_0(x):=DF(\nabla u_0(x))$.
Assume that the \emph{augmented} linearization
\[
\mathcal L_0: V\times \mathbb R \to V^*\times \mathbb R,\qquad
\mathcal L_0(v,\mu):=\big(\mathcal M_0[v]-\mu\,|u_0|^{p-2}u_0,\ \int_\Omega |u_0|^{p-2}u_0\,v\,dx\big)
\]
is a bounded linear isomorphism. Equivalently, there exists $C_{\rm lin}>0$ such that
for all $(v,\mu)\in V\times\mathbb R$,
\begin{equation}\label{eq:linearized_stability_ass}
\|v\|_{W^{1,p}(\Omega)}+|\mu|
\le C_{\rm lin}\,\Big(\|\mathcal M_0[v]-\mu|u_0|^{p-2}u_0\|_{V^*}+\Big|\int_\Omega |u_0|^{p-2}u_0\,v\,dx\Big|\Big).
\end{equation}
\end{assumption}

The orthogonality condition defining $V_\perp$ arises naturally from the normalization 
constraint. Differentiating $\|u(h)\|_{L^p(\Omega)}^p = 1$ with respect to $h$ in 
direction $\xi$ yields
\[
0 = \frac{d}{d\varepsilon} (\int_\Omega |u(h + \varepsilon\xi)|^p dx )\Big|_{\varepsilon=0}
= p \int_\Omega |u|^{p-2}u \cdot u'(h)[\xi] \, dx,
\]
so the derivative $u'(h)[\xi]$ necessarily lies in $V_\perp$.

\begin{theorem}\label{thm:frechet}
Let $p\ge 2$ and let $\mathcal{H}_{ad} := \{h \in C^1(\gamma) : \inf_\gamma h > 0\}$ 
denote the admissible set. Fix $h_0\in\mathcal H_{ad}$ and let $(\lambda_0,u_0)$ be its normalized principal eigenpair.
Assume that Assumption~\ref{ass:lin_inv} holds at $h_0$.
Then there exists a $C^1$--neighborhood $\mathcal U\subset\mathcal H_{ad}$ of $h_0$ such that the solution operator
\[
\mathcal{S} : \mathcal{U} \to V, \quad h \mapsto u(h),
\]
(where $u(h)$ is the normalized principal eigenfunction) is Fr\'echet differentiable on $\mathcal U$.
Similarly, the eigenvalue map $h \mapsto \lambda(h)$ is Fr\'echet differentiable on $\mathcal U$. 
Moreover, there exists a nondecreasing function $\omega:[0,\infty)\to[0,\infty)$ with $\omega(t)\to0$ as $t\to0$ such that
\[
\|u(h)-u(h_0)-u'(h_0)[h-h_0]\|_{W^{1,p}(\Omega)}
+|\lambda(h)-\lambda(h_0)-\lambda'(h_0)[h-h_0]|
\le \omega(\|h-h_0\|_{C^1(\gamma)})\,\|h-h_0\|_{C^1(\gamma)}
\]
for all $h$ sufficiently close to $h_0$ in $C^1(\gamma)$.

For $h \in \mathcal{U}$ and $\xi \in C^1(\gamma)$, the directional derivative 
$u'(h)[\xi] \in V_\perp$ satisfies the linearized problem
\begin{equation}\label{eq:linearized_system}
\begin{cases}
-\operatorname{div}(A(x)\nabla u') - \lambda(p-1)|u|^{p-2}u' = \lambda'|u|^{p-2}u 
& \text{in } \Omega, \\[0.3em]
u' = 0 & \text{on } \Gamma_D, \\[0.3em]
A(x)\nabla u' \cdot \nu + (p-1)h|u|^{p-2}u' = -\xi |u|^{p-2}u & \text{on } \gamma, \\[0.3em]
\displaystyle\int_\Omega |u|^{p-2}u\, u' \, dx = 0, &
\end{cases}
\end{equation}
where we abbreviate $u = u(h)$, $u' = u'(h)[\xi]$, $\lambda' = \lambda'(h)[\xi]$, 
and $A(x) = DF(\nabla u(x))$.
The eigenvalue derivative is given by
\begin{equation}\label{eq:lambda_derivative}
\lambda'(h)[\xi] = \int_\gamma \xi\, |u(h)|^p \, d\sigma.
\end{equation}
\end{theorem}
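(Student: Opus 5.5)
We will obtain Theorem~\ref{thm:frechet} from the implicit function theorem (Theorem~\ref{thm:inverse_function} in its implicit form) applied to a nonlinear map that encodes both the eigenvalue equation and the normalization. Define
\[
\mathcal G: C^1(\gamma)\times V\times\mathbb R\to V^*\times\mathbb R,\qquad
\mathcal G(h,v,\mu):=\Big(\mathcal N(h,v,\mu),\ \int_\Omega |v|^p\,dx-1\Big),
\]
where
\[
\langle\mathcal N(h,v,\mu),\phi\rangle:=\int_\Omega F(\nabla v)\cdot\nabla\phi\,dx+\int_\gamma h\,G(v)\phi\,d\sigma-\mu\int_\Omega G(v)\phi\,dx ,
\]
with $F(\xi)=|\xi|^{p-2}\xi$ and $G(s)=|s|^{p-2}s$. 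By construction $\mathcal G(h_0,u_0,\lambda_0)=0$.

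\emph{Step 1: $\mathcal G$ is $C^1$.} Since $p\ge2$, both $F$ and $G$ are $C^1$, with derivatives growing like $|\xi|^{p-2}$. Standard Nemytskii-operator arguments then give that $\nabla v\mapsto F(\nabla v)$ is $C^1$ from $L^p$ to $L^{p'}$. Likewise, $v\mapsto G(v)$ is $C^1$ from $L^p(\Omega)$ to $L^{p'}(\Omega)$, and, by composing with the trace map $W^{1,p}\to L^p(\gamma)$, also into $L^{p'}(\gamma)$. The dependence on $h$ and on $\mu$ is bilinear and bounded, because $C^1(\gamma)\hookrightarrow L^\infty(\gamma)$.

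\emph{Step 2: identify the partial derivative.} The partial derivative in $(v,\mu)$ at the base point is
\[
(v,\mu)\mapsto\big(\mathcal M_0[v]-\mu G(u_0),\ p\int_\Omega G(u_0)v\,dx\big).
\]
To see this, use \eqref{eq:derivative_vector} for the gradient term and \eqref{eq:derivative_scalar} for the two lower-order terms; this yields exactly \eqref{eq:linearized_op_M_ass}. The resulting operator is $\mathcal L_0$ up to the harmless factor $p$ in the second component, so by Assumption~\ref{ass:lin_inv} it is an isomorphism. The implicit function theorem then produces a $C^1$ map $h\mapsto(\tilde u(h),\tilde\lambda(h))$ on a $C^1$-neighbourhood $\mathcal U$ of $h_0$. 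Being $C^1$, this map automatically satisfies the remainder estimate with a modulus $\omega$, namely the modulus of continuity of the derivative, via the mean value inequality.

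\emph{Step 3: main obstacle.} We must show that the IFT branch $\tilde u(h)$ coincides with the positive normalized principal eigenfunction $u(h)$. The IFT only guarantees uniqueness of zeros of $\mathcal G$ inside a small ball around $(u_0,\lambda_0)$. We therefore need continuity of $h\mapsto(u(h),\lambda(h))$ into $W^{1,p}\times\mathbb R$, so that $u(h)$ lies in that ball. For this we argue as in the compactness part of Theorem~\ref{thm:asymptotic}. The Rayleigh quotient \eqref{eq:rayleigh_quotient} is continuous in $h$ in $L^\infty$, so $\lambda(h)\to\lambda_0$. The eigenfunctions $u(h)$ are bounded in $W^{1,p}$, and weak limits are principal eigenfunctions. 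Simplicity (Proposition~\ref{prop:principal_eigenvalue}) together with positivity pins the limit down to $u_0$. Convergence of the energies upgrades weak to strong $W^{1,p}$ convergence, using the uniform convexity of $L^p$. Once the two branches coincide, we have $u=\tilde u$ and $\lambda=\tilde\lambda$ on $\mathcal U$.

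\emph{Step 4: linearized problem and eigenvalue derivative.} Differentiating $\mathcal G(h,u(h),\lambda(h))=0$ in direction $\xi$ gives the weak form of \eqref{eq:linearized_system}; the extra term $\int_\gamma\xi\,G(u)\phi$ is the one that produces the boundary source. Differentiating the normalization gives the orthogonality condition. For \eqref{eq:lambda_derivative}, test the linearized equation with $\phi=u$. Since $DF(\nabla u)\nabla u=(p-1)F(\nabla u)$, we get
\[
\int_\Omega A\nabla u'\cdot\nabla u\,dx=(p-1)\int_\Omega F(\nabla u)\cdot\nabla u'\,dx .
\]
By the eigen-equation tested with $u'$, this combines with the terms $(p-1)\int_\gamma h u^{p-1}u'$ and $-\lambda(p-1)\int_\Omega u^{p-1}u'$ to give zero. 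What remains is $0=\lambda'\int_\Omega u^p-\int_\gamma\xi u^p$, and the normalization $\int_\Omega u^p\,dx=1$ yields the formula.
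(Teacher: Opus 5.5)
Your proposal is correct and follows the paper's route. Both apply the implicit function theorem to the augmented map (eigen-equation plus normalization), identify its $(u,\lambda)$-derivative with $\mathcal L_0$ from Assumption~\ref{ass:lin_inv}, differentiate the identity to get \eqref{eq:linearized_system}, and test with $u$ to obtain \eqref{eq:lambda_derivative}. Your Step~3 goes beyond the paper's proof, which assumes without comment that the IFT branch is the positive normalized principal eigenpair; you prove it via continuity of $h\mapsto(u(h),\lambda(h))$, essentially the argument of Theorem~\ref{thm:compactness}, and your identity $DF(\xi)\xi=(p-1)F(\xi)$ gives \eqref{eq:lambda_derivative} without needing the orthogonality condition.
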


Here the PDE is understood in the weak sense in $V^*$, and the boundary term
$A\nabla u'\cdot \nu$ on $\gamma$ is the conormal trace associated with the
divergence-form operator $\operatorname{div}(A\nabla\cdot)$.
On $\partial\Omega\setminus\zeta$ (in particular on $\overline{\Gamma_{D}}$) the boundary regularity
from Lemma~\ref{lem:regularity} yields the classical pointwise interpretation.

\begin{proof}
We apply the implicit function theorem to an augmented system encoding both the 
eigenvalue equation and the normalization constraint. Define the residual map
\[
R : \mathcal{H}_{ad} \times V \times \mathbb{R} \to V^*
\]
by
\[
\langle R(h, u, \lambda), \phi \rangle 
:= \int_\Omega |\nabla u|^{p-2}\nabla u \cdot \nabla \phi \, dx 
- \lambda \int_\Omega |u|^{p-2}u \, \phi \, dx 
+ \int_\gamma h |u|^{p-2}u \, \phi \, d\sigma,
\qquad \forall \phi \in V,
\]
and set
\[
G(u) := \|u\|_{L^p(\Omega)}^p - 1.
\]
Define the augmented map
\[
\mathbb{H}(h, u, \lambda) := (R(h, u, \lambda),\, G(u))
:\mathcal{H}_{ad} \times V \times \mathbb{R} \to V^* \times \mathbb{R}.
\]
Then $\mathbb{H}(h, u, \lambda) = (0,0)$ if and only if $(u,\lambda)$ is a
normalized eigenpair for~\eqref{eq:p-laplace-robin}.

For $p\ge2$, the Nemytskii map $\xi \mapsto |\xi|^{p-2}\xi$ is $C^1$ on 
$\mathbb{R}^n$ (including at $\xi = 0$). Hence $R$ and $G$ are continuously 
Fr\'echet differentiable, and so is $\mathbb{H}$ on 
$\mathcal{H}_{ad} \times V \times \mathbb{R}$.

The partial Fr\'echet derivatives at a solution $(h, u, \lambda)$ are
\begin{align*}
\langle D_h R(h,u,\lambda)[\xi], \phi \rangle 
&= \int_\gamma \xi \, |u|^{p-2}u \, \phi \, d\sigma, \\[0.3em]
\langle D_u R(h,u,\lambda)[w], \phi \rangle 
&= \int_\Omega A(x)\nabla w \cdot \nabla \phi \, dx 
- \lambda(p-1) \int_\Omega |u|^{p-2}w \, \phi \, dx 
+ (p-1) \int_\gamma h \, |u|^{p-2}w \, \phi \, d\sigma, \\[0.3em]
\langle D_\lambda R(h,u,\lambda)[\mu], \phi \rangle 
&= -\mu \int_\Omega |u|^{p-2}u \, \phi \, dx, \\[0.3em]
DG(u)[w] 
&= p \int_\Omega |u|^{p-2}u \, w \, dx,
\end{align*}
where
\[
A(x)=DF(\nabla u(x))
=|\nabla u(x)|^{p-2}I+(p-2)|\nabla u(x)|^{p-4}\nabla u(x)\otimes\nabla u(x).
\]

Comparing with~\eqref{eq:linearized_op_M_ass}, we recognize that
\[
D_u R(h,u,\lambda)=\mathcal{M}_h.
\]
Assumption~\ref{ass:lin_inv} (applied at $h_0$) implies that the augmented linearization
\[
D_{(u,\lambda)}\mathbb{H}(h_0, u_0, \lambda_0) : V \times \mathbb{R} \to V^* \times \mathbb{R}
\]
is an isomorphism (the normalization removes the scaling direction).

The implicit function theorem now guarantees the existence of neighborhoods 
$\mathcal{U} \subset \mathcal{H}_{ad}$ and Fr\'echet differentiable maps 
$h \mapsto (u(h), \lambda(h))$ such that
\[
\mathbb{H}(h, u(h), \lambda(h)) = (0, 0)
\qquad\text{for all } h\in\mathcal{U}.
\]
Differentiating the identity $\mathbb{H}(h, u(h), \lambda(h)) = (0,0)$ in the direction
$\xi\in C^1(\gamma)$ yields the linearized system~\eqref{eq:linearized_system}.

Finally, \eqref{eq:lambda_derivative} follows by testing the weak formulation with 
$\phi = u$ and using the orthogonality condition
\[
\int_\Omega |u|^{p-2}u \, u'(h)[\xi] \, dx = 0.
\]
Equivalently, differentiating the Rayleigh quotient representation
\[
\lambda(h) = \int_\Omega |\nabla u(h)|^p \, dx + \int_\gamma h\,|u(h)|^p \, d\sigma
\]
under the normalization $\|u(h)\|_{L^p(\Omega)}^p = 1$ yields the same formula.
\end{proof}

\subsection{Stability Estimate}\label{sec:stability}

We establish a conditional stability estimate for recovering $h$ near $h_0$ from the measurement map.
The estimate combines (i) a quantitative stability bound for the \emph{linearized} inverse problem
(Assumption~\ref{ass:quantitative-UCP}) with (ii) an explicit control of the \emph{nonlinear remainder}
coming from Fr\'echet differentiability (Theorem~\ref{thm:frechet}).  The resulting bound is stated in a form that keeps this nonlinear remainder explicit, without adding any additional assumptions.

We first define the measurement operator that maps the Robin coefficient to the 
observable data.

\begin{definition}\label{def:measurement-map}
Define the measurement map $\mathcal{F}: \mathcal{H}_{\mathrm{ad}} \to \mathbb{R} \times W^{-1/p,p'}(\Gamma_D)$ by
\begin{equation}\label{eq:measurement-map}
\mathcal{F}(h) := \Bigl( \lambda(h),\; \big(|\nabla u(h)|^{p-2}\nabla u(h)\big)\cdot \nu\big|_{\Gamma_D} \Bigr),
\end{equation}
here $(u(h), \lambda(h))$ denotes
the principal eigenpair of~\eqref{eq:p-laplace-robin}
corresponding to the Robin coefficient~$h$, normalized by $\|u(h)\|_{L^p(\Omega)} = 1$.
\end{definition}

Let $F(\xi):=|\xi|^{p-2}\xi$ and set $g(h):=F(\nabla u(h))$. Since $u(h)$ solves \eqref{eq:p-laplace-robin} in the weak sense, we have $g(h)\in L^{p'}(\Omega;\mathbb R^n)$ and
\[
\dive\, g(h)= -\lambda(h)\,|u(h)|^{p-2}u(h)\in L^{p'}(\Omega)
\quad\text{in }\mathcal D'(\Omega).
\]
Hence
\[
g(h)\in W^{\dive,p'}(\Omega):=\{g\in L^{p'}(\Omega;\mathbb R^n):\dive\,g\in L^{p'}(\Omega)\},
\]
 and its normal trace $g(h)\cdot\nu$ is well-defined (in the weak sense) as an element of $W^{-1/p,p'}(\partial\Omega)$; we denote its restriction to $\Gamma_D$ by $\big(g(h)\cdot\nu\big)\big|_{\Gamma_D}\in W^{-1/p,p'}(\Gamma_D)$. On $\Gamma_D\setminus\zeta$, Lemma~\ref{lem:regularity} yields the classical pointwise interpretation, and by Assumption~\ref{ass:Sigma_measure_zero} all identities on $\Gamma_D$ are understood a.e.\ (up to modification on $\zeta$).

The key analytical input is a quantitative estimate for the linearized problem.
When the Robin coefficient is perturbed by $\xi$ on the inaccessible boundary 
$\gamma$, the resulting change in the measured data on $\Gamma_D$ is given by 
$D\mathcal{F}(h_0)[\xi]$. The estimate~\eqref{eq:conditional-linearized} quantifies 
how effectively the smallness of $D\mathcal{F}(h_0)[\xi]$ (measured on the accessible 
boundary) propagates back to control $\xi$ (on the inaccessible boundary). This 
``propagation of smallness'' from $\Gamma_D$ to $\gamma$ necessarily incurs a loss, 
reflected by the H\"older exponent $\alpha \in (0,1)$: control of $\xi$ in $L^2(\gamma)$ 
requires both the linearized data \emph{and} an a priori bound on $\xi$ in the stronger 
$C^1(\gamma)$ norm. For $p=2$, estimates of this type are derived from Carleman 
inequalities for the Laplacian~\cite{alessandrini1990singular,chaabane2003}.

\begin{assumption}[Quantitative linearized stability]\label{ass:quantitative-UCP}
Let $h_0 \in \mathcal{H}_{\mathrm{ad}}$ and let $(u_0, \lambda_0)$ 
be the corresponding normalized principal eigenpair. There exist constants 
$C_0 > 0$ and $\alpha \in (0,1)$ such that for all $\xi \in C^1(\gamma)$,
\begin{equation}\label{eq:conditional-linearized}
\|\xi\|_{L^2(\gamma)} \leq C_0 \, \|D\mathcal F(h_0)[\xi]\|_{\mathbb{R} \times W^{-1/p,p'}(\Gamma_D)}^{\alpha}
\, \|\xi\|_{C^1(\gamma)}^{1-\alpha},
\end{equation}
where $D\mathcal F(h_0): C^1(\gamma) \to \mathbb{R} \times W^{-1/p,p'}(\Gamma_D)$ is the Fr\'echet derivative of~$\mathcal F$ at~$h_0$, given by
\begin{equation}\label{eq:DF-formula}
D\mathcal F(h_0)[\xi] = \biggl( \int_\gamma \xi \, |u_0|^p \, d\sigma,\; 
\bigl[(A_0\nabla u')\cdot\nu\bigr]\big|_{\Gamma_D} \biggr),
\end{equation}
with $u' = u'(h_0)[\xi]$ solving the linearized problem~\eqref{eq:linearized_system}.
\end{assumption}

\begin{remark}\label{rem:conditional-estimate}
Assumption~\ref{ass:quantitative-UCP} is weaker than assuming a bounded left 
inverse for $D\mathcal F(h_0)$. The interpolation structure in~\eqref{eq:conditional-linearized} 
reflects the severe ill-posedness: small measurement residuals $\|D\mathcal F(h_0)[\xi]\|$
can only control weak norms of~$\xi$ when combined with a priori bounds on 
stronger norms. For $p = 2$, estimates of this form are established 
in~\cite{chaabane2003} using Carleman inequalities for the Laplacian. 
The extension to $p \neq 2$ requires Carleman estimates for the linearized 
$p$-Laplacian, which we do not develop here; we therefore state~\eqref{eq:conditional-linearized} 
as a hypothesis.
\end{remark}

\begin{theorem}\label{thm:log-stability}
Let $h_0 \in \mathcal{H}_{ad}$ and 
let $\mathcal{F}$ be the measurement map from Definition~\ref{def:measurement-map}.
Assume that Assumption~\ref{ass:lin_inv} holds at $h_0$ (so that Theorem~\ref{thm:frechet} applies) and that Assumption~\ref{ass:quantitative-UCP} is satisfied.

Fix $M>0$. Then there exists $\rho_0>0$ such that for any $h\in \mathcal{H}_{ad}$ with
\[
\|h-h_0\|_{C^1(\gamma)} \le M,
\qquad
\|h-h_0\|_{C^1(\gamma)} < \rho_0,
\]
and letting
\[
\delta := \|\mathcal{F}(h) - \mathcal{F}(h_0)\|_{\mathbb{R}\times W^{-1/p,p'}(\Gamma_D)},
\]
one has the stability estimate
\begin{equation}\label{eq:log-stability}
\|h-h_0\|_{L^2(\gamma)}
\le
C_0\,M^{1-\alpha}\,
\Big(
\delta
+
\omega\big(\|h-h_0\|_{C^1(\gamma)}\big)\,\|h-h_0\|_{C^1(\gamma)}
\Big)^{\alpha},
\end{equation}
where $\omega$ is the modulus from Theorem~\ref{thm:frechet}.
\end{theorem}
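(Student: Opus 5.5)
The plan is to apply Assumption~\ref{ass:quantitative-UCP} with the direction $\xi:=h-h_0$, and then to replace the linearized data $D\mathcal F(h_0)[\xi]$ by the actual data difference $\mathcal F(h)-\mathcal F(h_0)$ plus a remainder controlled through Theorem~\ref{thm:frechet}. First we choose $\rho_0$ so small that the ball $\{\|h-h_0\|_{C^1(\gamma)}<\rho_0\}$ lies in the neighborhood $\mathcal U$ of Theorem~\ref{thm:frechet}. We also shrink $\rho_0$, if necessary, so that $\inf_\gamma h>0$ is automatic and the Fr\'echet expansion holds there. Since $\xi\in C^1(\gamma)$, inequality \eqref{eq:conditional-linearized} gives
\[
\|h-h_0\|_{L^2(\gamma)}\le C_0\,\|D\mathcal F(h_0)[\xi]\|^{\alpha}\,\|\xi\|_{C^1(\gamma)}^{1-\alpha}\le C_0\,M^{1-\alpha}\,\|D\mathcal F(h_0)[\xi]\|^{\alpha}.
\]
Here we used $\|\xi\|_{C^1}\le M$ and the fact that $t\mapsto t^{1-\alpha}$ is nondecreasing.

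Second, we write $D\mathcal F(h_0)[\xi]=\big(\mathcal F(h)-\mathcal F(h_0)\big)-\mathcal R(h)$, where $\mathcal R(h):=\mathcal F(h)-\mathcal F(h_0)-D\mathcal F(h_0)[\xi]$. The triangle inequality gives $\|D\mathcal F(h_0)[\xi]\|\le\delta+\|\mathcal R(h)\|$. It therefore remains to show that $\|\mathcal R(h)\|_{\mathbb R\times W^{-1/p,p'}(\Gamma_D)}\le\omega(\|\xi\|_{C^1})\|\xi\|_{C^1}$, possibly after enlarging $\omega$ by a harmless constant factor. Monotonicity of $t\mapsto t^\alpha$ then yields \eqref{eq:log-stability}.

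Third, we estimate the two components of $\mathcal R$. The eigenvalue component is exactly the eigenvalue remainder in Theorem~\ref{thm:frechet}, so it is bounded by $\omega(\|\xi\|)\|\xi\|$. For the flux component, we use the weak (divergence) definition of the normal trace. For $\psi\in W^{1/p',p}$ on $\Gamma_D$, extended by zero on $\gamma$ and then lifted to $\Psi\in W^{1,p}(\Omega)$ with $\|\Psi\|_{W^{1,p}}\lesssim\|\psi\|$, we have
\[
\langle g\cdot\nu,\psi\rangle=\int_\Omega g\cdot\nabla\Psi+\int_\Omega(\dive g)\Psi .
\]
We apply this with $g=F(\nabla u(h))-F(\nabla u_0)-A_0\nabla u'$. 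Its divergence is $-\lambda|u|^{p-2}u+\lambda_0|u_0|^{p-2}u_0+\lambda_0(p-1)|u_0|^{p-2}u'+\lambda'|u_0|^{p-2}u_0$, by \eqref{eq:linearized_system}. Both $g$ and $\dive g$ split into a first-order Taylor remainder of the Nemytskii maps $F$ and $G(s)=|s|^{p-2}s$ at $(\nabla u_0,u_0)$, plus linear terms in $u(h)-u_0-u'$ and $\lambda-\lambda_0-\lambda'$. The linear pieces are bounded by $C\,\omega(\|\xi\|)\|\xi\|$ directly from Theorem~\ref{thm:frechet}. The Nemytskii remainders are $o(\|u(h)-u_0\|_{W^{1,p}})$ in $L^{p'}$, because $F$ and $G$ are $C^1$ as maps $L^p\to L^{p'}$ for $p\ge2$. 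Combined with the Lipschitz bound $\|u(h)-u_0\|_{W^{1,p}}\le C\|\xi\|_{C^1}$, which also follows from Theorem~\ref{thm:frechet}, this gives a term of the form $\tilde\omega(\|\xi\|)\|\xi\|$. Equivalently, $\mathcal F$ is the composition of the Fr\'echet differentiable map $h\mapsto(u(h),\lambda(h))$ with the $C^1$ map $(u,\lambda)\mapsto(\lambda,(F(\nabla u)\cdot\nu)|_{\Gamma_D})$. Hence $\mathcal F$ is Fr\'echet differentiable at $h_0$ with derivative \eqref{eq:DF-formula}, and the chain rule produces a remainder modulus, which we take as $\omega$ (the largest of these moduli).

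The main obstacle is this third step: showing that the flux remainder on $\Gamma_D$ is controlled by the \emph{same} modulus $\omega$ as in Theorem~\ref{thm:frechet}, in the weak norm $W^{-1/p,p'}(\Gamma_D)$. The difficulty is the nonlinear Nemytskii part, which is only $o(\cdot)$ with an unspecified rate. I would handle it by redefining $\omega$ in Theorem~\ref{thm:frechet} as the maximum of the solution-map modulus and the composition modulus, which is legitimate since $\omega$ there is only asserted to exist. The remaining steps are routine monotonicity bookkeeping.
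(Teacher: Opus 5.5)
Your proposal is correct and follows the paper's proof. Both apply Assumption~\ref{ass:quantitative-UCP} to $\xi=h-h_0$, bound $\|\xi\|_{C^1(\gamma)}^{1-\alpha}\le M^{1-\alpha}$, and control $\|D\mathcal F(h_0)[\xi]\|$ by $\delta$ plus the Fr\'echet remainder of $\mathcal F$ via the triangle inequality; your third step spells out what the paper only asserts (differentiability of $\mathcal F$ through the weak normal trace), and you rightly note that the flux remainder needs $\omega$ enlarged to absorb the Nemytskii modulus, which is legitimate because Theorem~\ref{thm:frechet} only asserts that some such $\omega$ exists.
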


\begin{proof}
Set $\xi := h-h_0$. By Theorem~\ref{thm:frechet} (Fr\'echet differentiability of $(u,\lambda)$ at $h_0$) and by the continuity of the normal trace operator $W^{\dive,p'}(\Omega)\ni g\mapsto (g\cdot\nu)|_{\Gamma_D}\in W^{-1/p,p'}(\Gamma_D)$, the measurement map $\mathcal F$ is Fr\'echet differentiable at $h_0$ as a map into $\mathbb R\times W^{-1/p,p'}(\Gamma_D)$.

Hence
\[
\|\mathcal{F}(h)-\mathcal{F}(h_0) - D\mathcal{F}(h_0)[\xi]\|_{\mathbb{R}\times W^{-1/p,p'}(\Gamma_D)}
\le
\omega(\|\xi\|_{C^1(\gamma)})\,\|\xi\|_{C^1(\gamma)}.
\]
Therefore, by the triangle inequality,
\[
\|D\mathcal{F}(h_0)[\xi]\|_{\mathbb{R}\times W^{-1/p,p'}(\Gamma_D)}
\le
\delta + \omega(\|\xi\|_{C^1(\gamma)})\,\|\xi\|_{C^1(\gamma)}.
\]
Applying Assumption~\ref{ass:quantitative-UCP} and using $\|\xi\|_{C^1(\gamma)}\le M$ gives \eqref{eq:log-stability}.
\end{proof}
The conditional H\"older/interpolation rate in~\eqref{eq:log-stability} reflects the severe ill-posedness of the infinite-dimensional inverse problem.
In our forthcoming work, we show that Lipschitz stability is recoverable when the Robin coefficient is parameterized by a finite-dimensional subspace, as is the case in practical numerical reconstructions.

\subsection{Verification of Assumptions}

In this subsection, we state sufficient criteria  and a representative  model  under which 
the standing hypotheses used in Sections~\ref{sec:uniqueness}--\ref{sec:stability} are satisfied.

If $\Gamma_D$ and $\gamma$ are relatively open in $\partial\Omega$ and each has a Lipschitz boundary \emph{as a subset of} $\partial\Omega$ (equivalently, in local charts on $\partial\Omega$), then $\zeta$
is contained in a countable union of $(n-2)$--dimensional Lipschitz graphs in $\partial\Omega$.
Hence
\[
\mathcal H^{n-1}(\zeta)=0.
\]

\paragraph{Boundary non-degeneracy and the collar $\mathcal U_\eta$.}
Proposition~\ref{prop:boundary_nondegen} follows from Hopf's lemma (Lemma~\ref{lem:hopf}) and the mixed boundary
conditions: since $u=0$ on $\Gamma_D$ and $u>0$ in $\Omega$, one has $\partial_\nu u<0$ on $\Gamma_D\setminus\zeta$,
hence $|\nabla u|=|\partial_\nu u|>0$ there.
Moreover $u$ cannot vanish on $\gamma$ (otherwise Hopf contradicts the Robin condition), so $u>0$ on $\gamma$,
and at points where $h>0$ the Robin condition forces $\partial_\nu u<0$ and thus $|\nabla u|>0$.
Combining Proposition~\ref{prop:boundary_nondegen} with boundary $C^{1,\beta}$ regularity (Lemma~\ref{lem:regularity})
gives: for every fixed $\eta>0$, setting
\[
\Gamma_{D,\eta}:=\{x\in\Gamma_D:\mathrm{dist}(x,\zeta)\ge\eta\},
\]
there exist $c_0=c_0(\eta)>0$ and a neighborhood $\mathcal U_\eta\subset\overline\Omega$ of $\Gamma_{D,\eta}$
such that $|\nabla u|\ge c_0$ on $\mathcal U_\eta$.
This is the key input behind Definition~\ref{def:localized_reg} and Lemma~\ref{lem:reg_invisible}.

Lemma~\ref{lem:reg_invisible} shows that for any $0<\delta<c_0/2$ the localized regularization
$|\nabla u|_{\delta,\mathrm{loc}}$ from \eqref{eq:localized_reg} satisfies
\[
|\nabla u|_{\delta,\mathrm{loc}}=|\nabla u|
\qquad\text{and}\qquad
A_\delta=A
\quad\text{on }\mathcal U_\eta,
\]
where $A_\delta$ is defined in \eqref{eq:A_delta_loc} and $A=DF(\nabla u)$.
Consequently, all Cauchy-data relations on $\Gamma_{D,\eta}$ (and any measured subportion
$\Gamma_{\mathrm{meas}}\Subset\Gamma_{D,\eta}$) are \emph{independent of $\delta$}, while $A_\delta$ is uniformly elliptic
on $\overline\Omega$ with bounds \eqref{eq:uniform_ellip_delta}.

For convenience, we recall the three analytically nontrivial inputs.  {Assumption~\ref{ass:path_ellipticity} states that the path matrix $\bar A$ in \eqref{eq:coeff_matrix_path} must be uniformly elliptic on $\overline\Omega$ and belong to the coefficient class required by the boundary unique continuation theorem (here $C^{0,1}(\overline\Omega)$ in Theorem~\ref{thm:boundary_ucp}). Assumption~\ref{ass:lin_inv}} indicates that the linear map $L_0:V\times\mathbb R\to V^*\times\mathbb R$ is a bounded linear isomorphism.  In Assumption~\ref{ass:quantitative-UCP}, a conditional propagation-of-smallness (observability) inequality of the form \eqref{eq:conditional-linearized} holds for the linearized map $D\mathcal F(h_0)$.

\paragraph{Assumption~\ref{ass:path_ellipticity}:}
Write $F(\xi)=|\xi|^{p-2}\xi$ for $p\ge2$. For $\xi,\eta\in\mathbb R^n$,
\[
DF(\xi)\eta\cdot\eta =
|\xi|^{p-2}|\eta|^2+(p-2)|\xi|^{p-4}(\xi\cdot\eta)^2,
\]
so $DF(\xi)$ has eigenvalues $|\xi|^{p-2}$ (multiplicity $n-1$) and $(p-1)|\xi|^{p-2}$ (in the $\xi$--direction).
Consequently, for the path matrix
\[
\bar A(x)=\int_0^1 DF\!\left((1-t)\nabla u_2(x)+t\nabla u_1(x)\right)\,dt,
\]
one has the pointwise lower bound
\begin{equation}\label{eq:path_lower_bd_verif}
\bar A(x)\eta\cdot\eta
\;\ge\;
\min\{1,p-1\}\,|\eta|^2\int_0^1\Big|(1-t)\nabla u_2(x)+t\nabla u_1(x)\Big|^{p-2}\,dt.
\end{equation}
Hence, \emph{uniform ellipticity} follows once
\begin{equation}\label{eq:path_nondeg_verif}
\inf_{x\in\overline\Omega}\int_0^1\Big|(1-t)\nabla u_2(x)+t\nabla u_1(x)\Big|^{p-2}\,dt \;>\;0.
\end{equation}
A simple sufficient condition is the following ``no simultaneous vanishing'' bound:
if there exists $m_0>0$ such that
\begin{equation}\label{eq:no_simul_crit_verif}
|\nabla u_1(x)|+|\nabla u_2(x)|\ge m_0\quad\text{for all }x\in\overline\Omega,
\end{equation}
then \eqref{eq:path_nondeg_verif} holds with a quantitative lower bound depending only on $(p,m_0)$
(e.g.\ by comparing the integral in $t$ to a one-dimensional segment estimate, as in the explicit radial/1D computations).

\smallskip

The additional requirement $\bar A\in C^{0,1}(\overline\Omega)$ is \emph{automatic for $p=2$} (since $DF\equiv I$),
and is also satisfied in symmetric ODE-reduced settings (radial annuli, one-dimensional intervals) where $u_j$ are
classical $C^2$ solutions and $\nabla u_j$ are Lipschitz.
For genuinely multidimensional degenerate problems ($p>2$) the eigenfunction typically has interior critical points,
and global Lipschitz regularity of $\bar A$ may fail unless one assumes additional nondegeneracy/regularity ensuring that the coefficient in the linearization falls into the UCP class of Theorem~\ref{thm:boundary_ucp}.
In applications where one only needs UCP from a measured boundary portion, one may instead invoke the localized
regularization $A_\delta$ (Definition~\ref{def:localized_reg}), which preserves all boundary identities on $\Gamma_{D,\eta}$
but enforces global uniform ellipticity; then the remaining input is a boundary UCP theorem compatible with the chosen
coefficient class (Lipschitz).
\smallskip
\emph{Canonical examples.}
The above assumptions are fulfilled in several standard settings. In the linear case \(p=2\), one has \(\bar A\equiv I\), and hence Assumption~\ref{ass:path_ellipticity} is immediate. For a radial annulus with constant Robin data, if \(u_j(x)=U_j(|x|)\) and each profile \(U_j\) has a unique interior critical radius \(r_j^*\), then the condition \(r_1^*\neq r_2^*\) guarantees the no-simultaneous-vanishing mechanism \eqref{eq:no_simul_crit_verif}, and therefore implies \eqref{eq:path_nondeg_verif}. In this radial ODE framework, the path matrix \(\bar A\) is automatically Lipschitz continuous. The one-dimensional interval case is entirely analogous: the same argument reduces to a scalar coefficient, and the regularity of \(\bar A\) follows directly from classical ODE theory.

\paragraph{Assumption~\ref{ass:lin_inv} (invertibility of the augmented linearization).}
Assumption~\ref{ass:lin_inv} is  needed to apply the implicit function theorem
in Theorem~\ref{thm:frechet}. In standard uniformly elliptic regimes, it is consistent with coercivity on the
constrained space $V_\perp$ together with the simplicity of the principal eigenvalue
(Proposition~\ref{prop:principal_eigenvalue}).
Concretely, if the bilinear form associated with $\mathcal M_0$ in \eqref{eq:linearized_op_M_ass} is coercive on $V_\perp$
(e.g.\ under uniform ellipticity of $A_0$ and $\inf_\gamma h_0>0$), then the augmented mapping
\[
(v,\mu)\mapsto\Big(\mathcal M_0[v]-\mu|u_0|^{p-2}u_0,\ \int_\Omega |u_0|^{p-2}u_0\,v\,dx\Big)
\]
is an isomorphism by a standard Lax--Milgram/Fredholm alternative argument on $V_\perp$.
In particular, in the linear case $p=2$, this follows directly from classical Fredholm theory, and in the 1D/radial ODE settings, it can be checked by direct solvability of the corresponding augmented boundary value problem.

\paragraph{Assumption~\ref{ass:quantitative-UCP} (quantitative linearized stability).}
Assumption~\ref{ass:quantitative-UCP} encodes the quantitative propagation-of-smallness input needed for
Theorem~\ref{thm:log-stability}. For $p=2$, inequalities of the form \eqref{eq:conditional-linearized} are obtained
from Carleman estimates and quantitative unique continuation for second-order elliptic equations
(see, e.g., \cite{chaabane2003,Koch01} and references therein).
\subsection{Compactness and Convergence}\label{sec:compactness}

We record a standard compactness result for families of Robin coefficients and their associated eigenpairs. This result justifies passage to limits along bounded sequences of reconstructed coefficients, as arise in numerical approximations or regularization schemes. We emphasize that the forward eigenvalue problem~\eqref{eq:p-laplace-robin} is formulated without any regularization: the auxiliary parameter $\delta$ from Definition~\ref{def:localized_reg} enters only in linearized operators used for sensitivity analysis and unique continuation arguments. In particular, the principal eigenpair $(u(h), \lambda(h))$ depends only on $h$, not on $\delta$.

\begin{theorem}\label{thm:compactness}
Let $\{h_k\}_{k \geq 1} \subset \mathcal{H}_{ad}$ be a sequence of Robin coefficients satisfying
\[
\|h_k\|_{C^1(\gamma)} \leq M \quad \text{and} \quad \inf_k \inf_\gamma h_k \geq h_* \ge 0
\]
for some constants $M>0$ and $h_*\ge 0$. Let $(u_k, \lambda_k)$ denote the principal eigenpair of~\eqref{eq:p-laplace-robin} corresponding to $h_k$, normalized by $\|u_k\|_{L^p(\Omega)} = 1$ with $u_k > 0$ in $\Omega$.

Then there exists a subsequence (not relabeled), a function $h_0\in C^{0,1}(\gamma)$ with $h_0\ge h_*$ on $\gamma$, and a pair $(u_0,\lambda_0)$ such that
\begin{enumerate}[(i)]
\item $h_k\to h_0$ uniformly on $\gamma$, and in fact $h_k\to h_0$ in $C^{0,\alpha}(\gamma)$ for any $\alpha\in(0,1)$;
\item $u_k \rightharpoonup u_0$ weakly in $W^{1,p}(\Omega)$ and $u_k \to u_0$ strongly in $L^p(\Omega)$ and in $L^p(\gamma)$;
\item $u_k\to u_0$ strongly in $W^{1,p}(\Omega)$;
\item $\lambda_k \to \lambda_0$;
\item $\lambda_0=\lambda_1(h_0)$ and $(u_0,\lambda_0)$ is the normalized principal eigenpair of~\eqref{eq:p-laplace-robin} with Robin coefficient $h_0$; in particular $u_0>0$ in $\Omega$.
\end{enumerate}
\end{theorem}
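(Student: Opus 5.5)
I would argue in the order (i), (ii), (iv), (v), (iii), since strong gradient convergence is easiest once the limit is identified as the principal eigenfunction.

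\emph{Step (i).} Since $\|h_k\|_{C^1(\gamma)}\le M$, the family is uniformly bounded and equi-Lipschitz on the compact set $\overline\gamma$. Arzel\`a--Ascoli gives a subsequence converging uniformly to some $h_0\in C^{0,1}(\gamma)$ with $\mathrm{Lip}(h_0)\le M$. The bound $h_k\ge h_*$ passes to the limit. Convergence in $C^{0,\alpha}$ then follows from the interpolation inequality $[f]_{C^{0,\alpha}}\le C\|f\|_\infty^{1-\alpha}\,\mathrm{Lip}(f)^{\alpha}$ applied to $f=h_k-h_0$, whose Lipschitz constant is at most $2M$.

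\emph{Steps (ii) and (iv).} Fix any admissible $\varphi\in V$ with $\|\varphi\|_{L^p}=1$. Then
\[
\lambda_k\le \int_\Omega|\nabla\varphi|^p\,dx+M\int_\gamma|\varphi|^p\,d\sigma,
\]
so $\lambda_k$ is bounded and $\lambda_k\ge 0$. Since $\int_\Omega|\nabla u_k|^p\le\lambda_k$ and $\|u_k\|_{L^p}=1$, the sequence $u_k$ is bounded in $W^{1,p}(\Omega)$. Rellich--Kondrachov and compactness of the trace $W^{1,p}(\Omega)\to L^p(\partial\Omega)$ give a further subsequence with $u_k\rightharpoonup u_0$ weakly in $W^{1,p}$, strongly in $L^p(\Omega)$ and in $L^p(\gamma)$, and $\lambda_k\to\lambda_0$. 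The limit $u_0$ vanishes on $\Gamma_D$ because $V$ is weakly closed. It satisfies $\|u_0\|_{L^p}=1$ and $u_0\ge0$.

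\emph{Step (v).} I would use a $\Gamma$-convergence type argument. For the lower bound, weak lower semicontinuity gives $\int|\nabla u_0|^p\le\liminf\int|\nabla u_k|^p$. Uniform convergence of $h_k$ together with strong $L^p(\gamma)$ convergence of traces gives $\int_\gamma h_k|u_k|^p\to\int_\gamma h_0|u_0|^p$. Hence
\[
\lambda_1(h_0)\le \int_\Omega|\nabla u_0|^p+\int_\gamma h_0|u_0|^p\le\liminf_k\lambda_k=\lambda_0 .
\]
For the upper bound, test $\lambda_k$ against the principal eigenfunction $v_0$ of $h_0$. This gives $\lambda_k\le\int|\nabla v_0|^p+\int_\gamma h_k|v_0|^p\to\lambda_1(h_0)$. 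Therefore $\lambda_0=\lambda_1(h_0)$, and $u_0$ is a nonnegative normalized minimizer, hence an eigenfunction. Simplicity (Proposition~\ref{prop:principal_eigenvalue}) and its positivity give $u_0>0$ in $\Omega$ and $u_0=v_0$. Because the limit is unique, the convergence in (ii)--(v) holds along the whole subsequence extracted in (i).

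\emph{Step (iii).} The equalities in Step (v) force $\int_\Omega|\nabla u_k|^p\to\int_\Omega|\nabla u_0|^p$, since the boundary terms converge. Weak convergence in $L^p$ plus convergence of norms in the uniformly convex space $L^p(\Omega;\mathbb R^n)$ yields $\nabla u_k\to\nabla u_0$ strongly. Alternatively, one can test the difference of the weak formulations with $u_k-u_0$ and use the monotonicity inequalities of Section~\ref{sec:preliminaries}.

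\emph{Main obstacle.} The step that needs most care is the identification in (v). In particular, I would check that the upper-bound test function $v_0$ is admissible for every $k$, which it is since the admissible class does not depend on $h$. I would also check that the eigenvalue inequality chain is precise enough to yield both $\lambda_0=\lambda_1(h_0)$ and the energy convergence used in (iii). The case $h_*=0$ needs no separate treatment, because nonnegativity of $h_0$ suffices for Proposition~\ref{prop:principal_eigenvalue}.
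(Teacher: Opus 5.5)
Your proof is correct. It differs from the paper's proof in how it identifies the limit.

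\textbf{The paper's route.} The paper passes to the limit in the weak Euler--Lagrange equation. It uses a Minty--Browder monotonicity argument, only sketched there, to identify the weak limit of $|\nabla u_k|^{p-2}\nabla u_k$ as $|\nabla u_0|^{p-2}\nabla u_0$, so $(u_0,\lambda_0)$ is an eigenpair for $h_0$. It then gets $\lambda_0\le\lambda_1(h_0)$ from a fixed test function, and $\lambda_1(h_0)\le\lambda_0$ because $u_0$ is an eigenfunction. Positivity comes from the strong maximum principle, and (iii) is read off from the energy identities for $u_k$ and $u_0$.

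\textbf{Your route.} You work only with the energy. Weak lower semicontinuity gives the $\liminf$ bound and a fixed competitor gives the $\limsup$ bound. The resulting sandwich shows that $u_0$ is a normalized minimizer, hence an eigenfunction. It also yields $\int_\Omega|\nabla u_k|^p\to\int_\Omega|\nabla u_0|^p$ for (iii) at no extra cost. Positivity then follows from the simplicity statement in Proposition~\ref{prop:principal_eigenvalue} rather than a separate maximum-principle step.

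\textbf{What each buys.} Your route avoids identifying the limit of the nonlinear flux. To make the paper's Minty step rigorous, one must test the equation with $u_k$ to obtain $\limsup_k\int_\Omega|\nabla u_k|^{p-2}\nabla u_k\cdot\nabla u_k\le\int_\Omega\eta\cdot\nabla u_0$; you need none of that. Your uniqueness remark also upgrades the conclusion to convergence of the whole subsequence from (i), not a further one. The paper's route does not use that the $u_k$ are minimizers, so it adapts more readily to non-minimal critical points or non-variational equations.
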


\begin{proof}
Since $\|h_k\|_{C^1(\gamma)}\le M$, the family $\{h_k\}$ is equi-Lipschitz on the compact set $\gamma$. By Arzel\`a--Ascoli, there exists a subsequence (not relabeled) and $h_0\in C^{0,1}(\gamma)$ such that $h_k\to h_0$ uniformly on $\gamma$. Moreover, for any $\alpha\in(0,1)$ one has the interpolation estimate
\[
[h_k-h_0]_{C^{0,\alpha}(\gamma)} \;\le\; C(\gamma,\alpha)\,[h_k-h_0]_{C^{0,1}(\gamma)}^{\alpha}\,\|h_k-h_0\|_{C^{0}(\gamma)}^{1-\alpha},
\]
and $[h_k-h_0]_{C^{0,1}(\gamma)}$ is uniformly bounded, hence $h_k\to h_0$ in $C^{0,\alpha}(\gamma)$. Finally, from $h_k\ge h_*$ we get $h_0\ge h_*$ on $\gamma$.

To obtain uniform $W^{1,p}$ bounds on $\{u_k\}$, we test the weak formulation of~\eqref{eq:p-laplace-robin} with $\phi = u_k$. Using the normalization $\|u_k\|_{L^p(\Omega)} = 1$ gives the energy identity
\[
\lambda_k = \int_\Omega |\nabla u_k|^p \, dx + \int_\gamma h_k |u_k|^p \, d\sigma \geq \int_\Omega |\nabla u_k|^p \, dx.
\]
For an upper bound on $\lambda_k$, fix any nonzero $v \in V := \{w \in W^{1,p}(\Omega) : w|_{\Gamma_D} = 0\}$. The Rayleigh quotient characterization yields
\[
\lambda_k \leq \frac{\int_\Omega |\nabla v|^p \, dx + \int_\gamma h_k |v|^p \, d\sigma}{\int_\Omega |v|^p \, dx}
\leq \frac{\int_\Omega |\nabla v|^p \, dx + \|h_k\|_{L^\infty(\gamma)} \int_\gamma |v|^p \, d\sigma}{\int_\Omega |v|^p \, dx}
\leq \frac{\int_\Omega |\nabla v|^p \, dx + M \int_\gamma |v|^p \, d\sigma}{\int_\Omega |v|^p \, dx},
\]
so $\sup_k \lambda_k < \infty$. Combined with the energy identity and $\|u_k\|_{L^p(\Omega)}=1$, this gives a uniform bound $\sup_k \|u_k\|_{W^{1,p}(\Omega)} < \infty$.

By reflexivity of $W^{1,p}(\Omega)$, we extract a subsequence with $u_k \rightharpoonup u_0$ weakly in $W^{1,p}(\Omega)$. The Rellich--Kondrachov theorem gives $u_k \to u_0$ strongly in $L^p(\Omega)$, and since the trace map $W^{1,p}(\Omega)\to W^{1-1/p,p}(\partial\Omega)$ is continuous and $W^{1-1/p,p}(\partial\Omega)$ embeds compactly into $L^p(\partial\Omega)$, we also have $u_k \to u_0$ strongly in $L^p(\gamma)$. In particular, $\|u_0\|_{L^p(\Omega)} = 1$, so $u_0 \not\equiv 0$. The sequence $\{\lambda_k\}$ is bounded, hence (after extracting a further subsequence) $\lambda_k \to \lambda_0$ for some $\lambda_0 \geq 0$.

It remains to show that $(u_0, \lambda_0)$ solves~\eqref{eq:p-laplace-robin} with coefficient $h_0$ and that $\lambda_0=\lambda_1(h_0)$. Note that the weak problem~\eqref{eq:p-laplace-robin} is well-defined for any $h_0\in L^\infty(\gamma)$ with $h_0\ge h_*$, hence in particular for $h_0\in C^{0,1}(\gamma)$.
For any $\phi \in V$, the weak formulation reads
\[
\int_\Omega |\nabla u_k|^{p-2} \nabla u_k \cdot \nabla \phi \, dx + \int_\gamma h_k |u_k|^{p-2} u_k \phi \, d\sigma = \lambda_k \int_\Omega |u_k|^{p-2} u_k \phi \, dx.
\]
The strong convergence $u_k \to u_0$ in $L^p(\Omega)$ and $L^p(\gamma)$ implies $|u_k|^{p-2}u_k \to |u_0|^{p-2}u_0$ strongly in $L^{p'}(\Omega)$ and $L^{p'}(\gamma)$. Together with $h_k \to h_0$ uniformly on $\gamma$ and $\lambda_k \to \lambda_0$, we can pass to the limit in the boundary and right-hand side terms.
For the gradient term, by boundedness, we may assume
$|\nabla u_k|^{p-2}\nabla u_k \rightharpoonup \eta$ weakly in $L^{p'}(\Omega)$. By a standard Minty--Browder (monotonicity) argument for the map $\xi\mapsto|\xi|^{p-2}\xi$, using the weak formulation and $u_k\rightharpoonup u_0$ in $W^{1,p}(\Omega)$, one identifies $\eta=|\nabla u_0|^{p-2}\nabla u_0$.

Hence passing to the limit gives that $(u_0,\lambda_0)$ is an eigenpair for~\eqref{eq:p-laplace-robin} with Robin coefficient $h_0$.

To identify $\lambda_0$ as the \emph{principal} eigenvalue, use the Rayleigh quotient.
For any fixed $v\in V\setminus\{0\}$,
\[
\lambda_k=\lambda_1(h_k)\le \frac{\int_\Omega|\nabla v|^p\,dx+\int_\gamma h_k|v|^p\,d\sigma}{\int_\Omega|v|^p\,dx}.
\]
Letting $k\to\infty$ and using $h_k\to h_0$ uniformly yields
$\lambda_0\le \frac{\int_\Omega|\nabla v|^p\,dx+\int_\gamma h_0|v|^p\,d\sigma}{\int_\Omega|v|^p\,dx}$.
Taking the infimum over $v$ gives $\lambda_0\le \lambda_1(h_0)$.
Conversely, since $(u_0,\lambda_0)$ is an eigenpair and $u_0\not\equiv0$, the variational characterization gives $\lambda_1(h_0)\le \lambda_0$.
Thus $\lambda_0=\lambda_1(h_0)$.

Since $u_0 \ge 0$ as a limit of positive functions and $u_0 \not\equiv 0$, the strong maximum principle implies $u_0 > 0$ in $\Omega$, confirming that $(u_0, \lambda_0)$ is the principal eigenpair.

Finally, the strong $W^{1,p}$ convergence follows from the convergence of the gradient norms. Indeed,
\[
\int_\gamma h_k|u_k|^p\,d\sigma \to \int_\gamma h_0|u_0|^p\,d\sigma
\]
by uniform convergence of $h_k$ and strong convergence of $u_k$ in $L^p(\gamma)$, and $\lambda_k\to\lambda_0$ by construction. Using
$\lambda_k=\int_\Omega|\nabla u_k|^p\,dx+\int_\gamma h_k|u_k|^p\,d\sigma$,
we obtain $\int_\Omega|\nabla u_k|^p\,dx\to \int_\Omega|\nabla u_0|^p\,dx$.
Together with $\nabla u_k\rightharpoonup \nabla u_0$ in $L^p(\Omega)$ and uniform convexity of $L^p$, this yields $\nabla u_k\to \nabla u_0$ strongly in $L^p(\Omega)$, hence $u_k\to u_0$ strongly in $W^{1,p}(\Omega)$.
\end{proof}


\section*{Acknowledgments}
 The authors are grateful to Vladimir Bobkov for carefully reading the manuscript and for his insightful suggestions and helpful discussions.

\end{document}